\newcommand{\showcomments}{yes}
\newsavebox{\commentbox}
\newenvironment{com}%
{\ifthenelse{\equal{\showcomments}{yes}}%
{\footnotemark
        \begin{lrbox}{\commentbox}
        \begin{minipage}[t]{1.25in}\raggedright\sffamily\tiny
        \footnotemark[\arabic{footnote}]}
{\begin{lrbox}{\commentbox}}}%
{\ifthenelse{\equal{\showcomments}{yes}}%
{\end{minipage}\end{lrbox}\marginpar{\usebox{\commentbox}}}
{\end{lrbox}}}
\long\def\comment#1\endcomment{}
\def\<{\left\langle}
\def\>{\right\rangle}
\newcommand{\la}{\langle}
\newcommand{\ra}{\rangle}
\newtheorem{thm}{Theorem}[section]
\newtheorem{prop}[thm]{Proposition}
\newtheorem{cor}[thm]{Corollary}
\newtheorem{lem}[thm]{Lemma}
\newtheorem{cvn}[thm]{Convention}
\newtheorem{thmi}{Theorem}
\newtheorem{cori}[thmi]{Corollary}
\theoremstyle{definition} \newtheorem{defn}[thm]{Definition}
\theoremstyle{remark}
\newtheorem{rmk}[thm]{Remark}
\newtheorem{question}{Question}
\numberwithin{table}{section}
\def\square{\hfill${\vcenter{\vbox{\hrule height.4pt \hbox{\vrule
width.4pt height7pt \kern7pt \vrule width.4pt} \hrule
height.4pt}}}$}
\newcommand{\tsh}[1]{\left\{\kern-.9ex\left\{#1\right\}\kern-.9ex\right\}}
\DeclareMathOperator{\naturals}{\mathbb N}
\def\TT{{\mathcal T}}
\def\<{\langle}
\def\>{\rangle}
\def\ppp{\mathcal P}
\def\pp{{\mathcal P}}
\def\jjj{\mathcal J}
\long\def\Restate#1#2#3#4{
\medskip\par\noindent
{\bf #1 \ref{#2} #3} {\it #4}\par\medskip }
\definecolor{darkgreen}{cmyk}{1,0,1,.2}
\DeclareMathOperator{\link}{Link}
\DeclareMathOperator{\expect}{\mathbb E}
\newcommand\intersect\cap
\newcommand\infinity\infty
\newcommand\wt\widetilde
\DeclareMathOperator{\prob}{\mathbb P}
\newcommand\inject\hookrightarrow
\newcommand\union\cup
\newcommand\reals{{\mathbb R}}
\newcommand{\co}{\colon\thinspace}
\newcommand\join\Lambda
\newcommand\cross\times
\newcommand\lub\vee
\newcommand\glb\wedge
\def\coxeterthick{{\mathbb T}}
\begin{document}
\title{Thickness, relative hyperbolicity, and randomness in Coxeter
groups}

\author[J. Behrstock]{Jason Behrstock}
\address{Lehman College and The Graduate Center, CUNY, New York, New York, USA}
\email{jason.behrstock@lehman.cuny.edu}
\thanks{\flushleft {Behrstock was supported as an Alfred P. Sloan
Fellow and by the National Science Foundation under Grant Number NSF
1006219.}}

\author[M.F. Hagen]{Mark F. Hagen}
\address{U. Michigan, Ann Arbor, Michigan, USA}
\email{markfhagen@gmail.com}
\thanks{\flushleft {Hagen was supported by the National Science Foundation under Grant Number NSF 1045119.}}

\author[A. Sisto]{Alessandro Sisto}
\address{ETH, Z\"{u}rich, Switzerland}
\email{sisto@math.ethz.ch}

\address{Univ. Cath. de Louvain, Louvain-la-Neuve, Belgium}
\email{pe.caprace@uclouvain.be}

\maketitle

\centerline{
\textit{\footnotesize{
With an appendix written jointly with PIERRE-EMMANUEL CAPRACE
}}
}

\date{\today}

\begin{abstract}
For right-angled Coxeter groups $W_{\Gamma}$, we obtain a condition on
$\Gamma$ that is necessary and sufficient to ensure that $W_{\Gamma}$
is \emph{thick} and thus not relatively hyperbolic.
We show that Coxeter groups which are not thick all
admit canonical minimal relatively hyperbolic structures; further,
we show that in such a structure, the
peripheral subgroups are both parabolic (in the Coxeter group-theoretic
sense) and strongly algebraically thick.
We exhibit a
polynomial-time algorithm that decides whether a right-angled Coxeter
group is thick or relatively hyperbolic. We analyze random
graphs in the Erd\'{o}s-R\'{e}nyi model and establish the asymptotic
probability that a random right-angled Coxeter group is thick.

In the joint appendix we study Coxeter groups in full generality and
there we also obtain a dichotomy whereby any such group is either strongly
algebraically thick or admits a minimal relatively hyperbolic
structure. In this study, we also introduce a notion we
call \emph{intrinsic horosphericity} which provides a dynamical obstruction to
relative hyperbolicity which generalizes thickness.
\end{abstract}




\section*{Introduction}

The notion of relative hyperbolicity was introduced by
Gromov \cite{Gromov:hyperbolic}, then developed by
Farb \cite{Farb:RelHyp}. This notion is both sufficiently general to
include many important classes of groups including all (uniform and
non-uniform) lattices in rank-one semi-simple Lie groups, yet is sufficently restrictive
that it allows for powerful geometric, algebraic, and algorithmic
results to be proven, c.f.,
\cite{ArzhantsevaMinasyanOsin:SQ-universality, Drutu:RelHyp,
DrutuSapir:Splitting, Farb:RelHyp}. Further, relatively
hyperbolicity admits numerous geometric, topological, and dynamical
formulations which are all equivalent see e.g., \cite{Bowditch:RelHyp,
Dahmani:thesis, DrutuSapir:TreeGraded, Osin:RelHyp, Sisto:metricrelhyp, Sisto:projrelhyp,
 Yaman:RelHyp}.

Let $G$ be a finitely generated group
and $\pp$ a finite collection of proper subgroups of $G$.
The group $G$ is
\emph{hyperbolic relative to the subgroups $\pp$}, if:
collapsing the left cosets of $\pp$ to finite
diameter sets, in  any (hence all)  word metric on $G$, yields a
$\delta$--hyperbolic space; and, the collection $\pp$ satisfies the
\emph{bounded coset property} which, roughly speaking,
requires that in the $\delta$--hyperbolic metric space obtained as
above any pair of quasigeodesics with the same endpoints travels
through the collapsed cosets in approximately the same manner.
The subgroups in $\pp$ are called \emph{peripheral subgroups}. We say a
group is \emph{relatively hyperbolic} when there is some collection
of subgroups for which this holds.  A collection $\pp$ of peripheral
subgroups of the relatively hyperbolic group $G$ is \emph{minimal} if
for any other relatively hyperbolic structure $(G,\mathcal Q)$ on $G$,
each $P\in\pp$ is conjugate into some $Q\in\mathcal Q$;
relatively hyperbolic groups
do not always admit minimal structures \cite[Theorem~6.3]{BDM}.
Note
that we will follow  the convention  of requiring
the subgroups to be proper, which rules out the
trivial case of $G$ being hyperbolic relative to itself. Note also that a
group $G$ is hyperbolic relative to hyperbolic subgroups if and only
if $G$ is hyperbolic.

We will also be interested in the notion of \emph{thickness} which
was introduced by Behrstock--Dru\c{t}u--Mosher as a powerful geometric
obstruction to relative hyperbolicity
which holds in many interesting
cases, including most mapping class groups, right-angled Artin
groups, lattices in higher-rank semisimple Lie groups, and elsewhere
\cite{BDM}.
Thickness is defined inductively, at the base level,
\emph{thick of order 0}, it is characterized by linear divergence.
Roughly, a group is \emph{thick of order~n} if it is a ``network of
left cosets of
subgroups'' which are thick of lower orders, essentially this means
that the union of these cosets is the entire space
and any two points in the
space can be connected by a sequence of these cosets
which successively intersect along infinite diameter subsets;
the precise definition appears in Section~\ref{subsec:thick_background}.
Thickness has proven to be
an important invariant for obtaining upper bounds on divergence and
we shall utilize this below,
c.f.,
\cite{BehrstockCharney, BehrstockHagen:cubulated1, BehrstockDrutu:thick2,
BrockMasur:WPrelhyp, Sultan:thesis}.
In a
relatively hyperbolic group any thick subgroup must be contained
inside a peripheral
subgroup, see
\cite[Corollary~7.9]{BDM} together with \cite[Theorem~4.1]{BDM}.
This fact yields the
useful application that: any
relatively hyperbolic structure in which the peripheral subgroups
are thick is a minimal relatively hyperbolic structure, see
\cite[Theorem~1.8]{DrutuSapir:TreeGraded} and
\cite[Corollary~4.7]{BDM}.

In this paper, we study thickness and relative hyperbolicity in the setting of Coxeter groups.  One reason to do so is that Coxeter groups have numerous interesting properties which make them a standard testing ground in geometric group theory.  For example, these groups are known to act properly on CAT(0) cube complexes~\cite{NibloReeves:cubed}, which allows them to be studied using the tools of CAT(0) geometry.  In particular, this connects them to the study of thickness of cubulated groups initiated in~\cite{BehrstockHagen:cubulated1}.

We first specialize to the case of right-angled Coxeter groups, the class of which is diverse; for instance, each right-angled Artin group is a finite-index subgroup of a right-angled Artin group~\cite{DavisJanuszkiewicz}.  The right-angled Coxeter group $W_{\Gamma}$ is generated by involutions indexed by vertices of the finite simplicial graph $\Gamma$; the relations are commutation relations corresponding to edges.  Right-angled Coxeter groups admit a canonical relatively hyperbolic structure in terms of thick peripheral subgroups:

\begin{thmi}[Right-angled Coxeter groups
    are thick or relatively hyperbolic]\label{thmi:rel_hyp_graph}
Let $\mathcal T$ be the class consisting of the finite simplicial
graphs $\Lambda$ such that $W_{\Lambda}$ is strongly algebraically
thick.  Then for any finite simplicial graph $\Gamma$ either:
$\Gamma\in\mathcal T$, or there exists a collection $\mathbb J$ of
induced subgraphs of $\Gamma$ such that
$\mathbb J\subset \mathcal T$  and $W_{\Gamma}$ is hyperbolic relative to the
collection $\{W_J:J\in\mathbb J\}$ and this is relatively hyperbolic
structure is minimal.
\end{thmi}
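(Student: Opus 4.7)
The plan is to obtain the dichotomy by giving a purely combinatorial condition on $\Gamma$ that detects thickness of $W_\Gamma$, and then extracting the peripheral structure directly from $\Gamma$ when that condition fails. The first step is to characterize the class $\mathcal{T}$ inductively from $\Gamma$. The key observation is that if $\Gamma = A \ast B$ is an induced join where $W_A$ and $W_B$ are both infinite, then $W_\Gamma$ contains $W_A \times W_B$ as a parabolic subgroup quasi-isometric to a product of two infinite groups, which forces $W_\Gamma$ to be strongly algebraically thick of order $0$. Accordingly I would define a chain $\mathcal{T}_0 \subset \mathcal{T}_1 \subset \cdots$ in which $\mathcal{T}_0$ consists of joins of the above kind and $\mathcal{T}_{n+1}$ consists of graphs whose vertex sets are covered by induced $\mathcal{T}_n$-subgraphs arranged in a ``chain along infinite parabolic intersections.'' The claim is that $\mathcal{T} = \bigcup_n \mathcal{T}_n$, and this equivalence would be proved by translating the BDM networking definition into the combinatorics of commuting subsets of the generating involutions.

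Assuming this characterization, define $\mathbb{J}$ to be the set of maximal induced subgraphs $J \subset \Gamma$ with $J \in \mathcal{T}$. When $\Gamma \notin \mathcal{T}$ each $J \in \mathbb{J}$ is a proper induced subgraph, and the collection $\mathbb{J}$ is finite and $W_\Gamma$-canonical because the $W_J$ are standard parabolic subgroups. To establish that $W_\Gamma$ is hyperbolic relative to $\{W_J : J \in \mathbb{J}\}$, I would work in the Davis complex $\Sigma_\Gamma$ and cone off the cosets of the $W_J$, then verify $\delta$-hyperbolicity and the bounded coset property of the result. The decisive input is a geometric dichotomy: any isometrically embedded flat, and more generally any large-scale networked configuration, in $\Sigma_\Gamma$ must be supported on generators whose induced subgraph lies in some $\mathcal{T}_n$, hence inside some $J \in \mathbb{J}$. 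This channels all non-hyperbolic behavior into the peripherals. Alternatively, once the combinatorics are in hand, one can feed the $W_J$ into the right-angled-Coxeter specialization of Caprace's relative hyperbolicity criterion for Coxeter groups.

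Finally, minimality of the structure $(W_\Gamma, \{W_J\})$ is almost immediate from the results recalled in the introduction: because each $W_J$ is strongly algebraically thick, \cite[Corollary~7.9]{BDM} together with \cite[Corollary~4.7]{BDM} force every peripheral subgroup of any other relatively hyperbolic structure on $W_\Gamma$ to contain a conjugate of each $W_J$, so $\{W_J\}$ refines every such structure. The main obstacle in the whole argument is the geometric step in paragraph two, namely showing that the failure of the inductive join/networking condition on $\Gamma$ really does translate to hyperbolicity of the coned-off Davis complex. The combinatorics of $\mathcal{T}$ are pleasant, but proving that any quadratic divergence in $\Sigma_\Gamma$ is detected by induced joins and networked into some $J \in \mathbb{J}$ requires a careful hyperplane and flat analysis in the cube complex, together with control over how parabolic cosets sit relative to one another; this is where the bulk of the technical work will go.
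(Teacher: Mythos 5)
There is a genuine gap, and it sits exactly where you locate ``the bulk of the technical work.'' Your outline takes as input the equality $\mathcal T=\bigcup_n\mathcal T_n$, i.e.\ the claim that thickness of $W_\Gamma$ is always witnessed by a covering of $\Gamma$ by induced subgraphs whose standard parabolics form the network. The direction ``covered $\Rightarrow$ thick'' is indeed just the networking argument, but the direction you actually need --- thick $\Rightarrow$ combinatorially built from induced subgraphs --- cannot be obtained by ``translating the BDM networking definition into the combinatorics of commuting subsets'': a network witnessing thickness need not consist of standard parabolic subgroups at all. In the characterization used in the paper, the essential closure operation is coning a new generator $s$ over an aspherical subgraph, and the witnessing network there is $\{W_K, sW_Ks\}$, where $sW_Ks$ is not a standard parabolic; thickness follows because $\langle W_K\cup sW_Ks\rangle$ has index at most $2$ (Lemma~\ref{lem:index2}, via reflection subgroups). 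Your hierarchy, whose base is joins of two infinite factors and whose inductive step is covering by induced lower-level subgraphs, omits this move: a cone vertex attached to two vertices at distance at least $3$ lies in no induced $4$-cycle, and it is not clear it lies in any proper induced thick subgraph, so there is no evident reason your $\bigcup_n\mathcal T_n$ exhausts $\mathcal T$. Likewise your ``decisive geometric input'' --- that every flat or networked configuration in the Davis complex is supported on an induced subgraph in some $\mathcal T_n$ --- is an unproved flat-rigidity-type statement.

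The paper closes this gap by running the logic in the opposite order. It defines $\mathcal T$ combinatorially (squares; coning over aspherical subgraphs; (generalized) unions along aspherical intersections) and proves only the easy direction, that these graphs give strongly algebraically thick groups (Theorem~\ref{thm:T_is_thick}, via Proposition~\ref{prop:thickgeneral} and Lemma~\ref{lem:index2}). It then takes $\mathbb J$ to be the maximal induced subgraphs in that combinatorial class and verifies conditions (RH1)--(RH3) of Caprace's criterion (Theorem~\ref{a'}) in a few lines, using only maximality plus the two closure properties --- no coned-off Davis complex, no BCP verification, no hyperplane analysis. Relative hyperbolicity with thick peripherals then gives minimality (as you say, via \cite[Corollary~4.7]{BDM}; note the correct statement is that each $W_J$ is conjugate \emph{into} a peripheral of any other structure, not that every peripheral contains conjugates of each $W_J$), and, crucially, it gives the hard direction of the characterization as a corollary: if $W_\Gamma$ were thick with $\Gamma$ outside the combinatorial class, it would be hyperbolic relative to proper subgroups, contradicting \cite[Corollary~7.9]{BDM} (Corollary~\ref{cor:thick_char}). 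So the statement you propose to establish first is in fact a consequence of the theorem, and your plan, which requires it as an input, leaves the main step unproved; the viable repair is precisely your ``alternative'' route, made primary: prove closure of the class of thick graphs under coning over aspherical links and unions along aspherical intersections, feed the maximal such subgraphs into Theorem~\ref{a'}, and deduce the covering description afterwards.
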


One application of this theorem is to the quasi-isometric
classification of Coxeter groups. As thickness is a quasi-isometric
invariant, this provides a way to distinguish the thick Coxeter
groups from many other groups. A more refined classification also
follows from this result using the theorem that the quasi-isometric
image of a
group which is hyperbolic relative to thick periperhal subgroups is
also hyperbolic relative to thick periperhal subgroups each of which
is quasi-isometric to one of the peripherals in the
source, see \cite[Corollary~4.8]{BDM} and \cite{Drutu:RelHyp}. Prior
to this application of Theorem~\ref{thmi:rel_hyp_graph},
the primary source of classifying right-angled
Coxeter groups was to use classification theorems in right-angled
Artin groups (i.e., \cite{BehrstockNeumann:qigraph,
BehrstockJanuszkiewiczNeumann:highdimartin,
BestvinaKleinerSageev:RAAG1}) and then apply these
by finding commensurable right-angled
Coxeter group (for instance, by applying \cite{DavisJanuszkiewicz}).

Additionally, Theorem~\ref{thmi:rel_hyp_graph}
provides an effective classification
theorem because $\mathcal T$ can be characterized combinatorially as follows:

\begin{thmi}[Combinatorial characterization of
    thick right-angled Coxeter groups]\label{thmi:thick_char}
Let $\mathcal T$ be the class of finite simplicial graphs whose corresponding right-angled Coxeter groups are strongly algebraically thick can be characterized as follows.  It is the smallest class of graphs satisfying:
\begin{enumerate}
 \item $K_{2,2}\in\mathcal T$, where $K_{2,2}$ is the complete bipartite graph on two sets of two elements, i.e., a 4-cycle.
 \item Let $\Gamma\in\mathcal T$ and let $\Lambda\subset\Gamma$ be an
 induced subgraph which is not a clique.
 Then the graph obtained from $\Gamma$
 by coning off $\Lambda$ is in $\mathcal T$.
\item Let $\Gamma_1,\Gamma_2\in\mathcal T$ and suppose there exists a graph $\Gamma$, which is not a clique, and which arises as a subgraph of each of the $\Gamma_i$. Then the union $\Lambda$ of $\Gamma_1,\Gamma_2$ along $\Gamma$ is in $\mathcal T$, and so is any graph obtained from $\Lambda$ by adding any collection of edges joining vertices in $\Gamma_1-\Gamma$ to vertices of $\Gamma_2-\Gamma$.
\end{enumerate}
\end{thmi}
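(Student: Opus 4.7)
The plan is to verify the two inclusions $\mathcal{T}_*\subseteq\mathcal{T}$ and $\mathcal{T}\subseteq\mathcal{T}_*$, where $\mathcal{T}_*$ denotes the smallest class of graphs satisfying (1)--(3).

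For $\mathcal{T}_*\subseteq\mathcal{T}$, I would check that each closure operation preserves strong algebraic thickness. First, $W_{K_{2,2}}\cong D_\infty\times D_\infty$ is a direct product of two infinite groups; its asymptotic cones are products and have no cut points, so it is the archetype of a strongly algebraically thick (order $0$) group. For operation (2), coning off a non-clique induced subgraph $\Lambda$ introduces a new vertex $v$ commuting with every generator of $W_\Lambda$. Since $\Lambda$ is not a clique, $W_\Lambda$ contains an infinite dihedral subgroup and so has infinite diameter. One exhibits the strong algebraic thickness of $W_{\Gamma'}$ by combining the existing thick network in $W_\Gamma$ with the new subgroup $\langle v\rangle\times W_\Lambda$, using the infinite-diameter overlap $W_\Lambda$ as the connecting intersection. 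For operation (3), $W_\Lambda$ splits as an amalgamated product $W_{\Gamma_1}\ast_{W_\Gamma}W_{\Gamma_2}$ in the edgeless case, and extra edges between $\Gamma_1-\Gamma$ and $\Gamma_2-\Gamma$ only enrich commutation. The thick networks in $W_{\Gamma_1}$ and $W_{\Gamma_2}$ glue along the infinite subgroup $W_\Gamma$ (infinite because $\Gamma$ is not a clique), yielding a thick network in $W_\Lambda$.

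For the reverse direction $\mathcal{T}\subseteq\mathcal{T}_*$, I would induct on $|V(\Gamma)|$. When $|V(\Gamma)|=4$ the claim reduces to the observation that $K_{2,2}$ is the unique four-vertex graph whose RACG is strongly algebraically thick, checked by enumeration. For $|V(\Gamma)|>4$, the strong algebraic thickness of $W_\Gamma$ produces a network of thick subgroups; by standard facts on parabolic subgroups of Coxeter groups, these can be taken of the form $W_{\Lambda_i}$ for induced subgraphs $\Lambda_i\subsetneq\Gamma$, which by the induction hypothesis lie in $\mathcal{T}_*$. One then shows that either (a) some vertex $v\in V(\Gamma)$ has $\mathrm{link}(v)$ a non-clique and $\Gamma-v\in\mathcal{T}$, in which case $\Gamma$ is obtained from $\Gamma-v\in\mathcal{T}_*$ by coning off $\mathrm{link}(v)$, placing $\Gamma\in\mathcal{T}_*$ via operation (2); or (b) $\Gamma$ admits a non-trivial union decomposition $\Gamma=\Gamma_1\cup\Gamma_2$ along a non-clique subgraph $\Gamma_1\cap\Gamma_2$, with each $\Gamma_i$ a proper induced subgraph giving a strongly algebraically thick RACG and the extra edges between $\Gamma_i-\Gamma$ absorbed into operation (3).

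The main obstacle is the inductive step of the reverse direction, specifically showing that one of cases (a) or (b) must apply. This requires translating the algebraic/geometric network of thick subgroups into a combinatorial decomposition of $\Gamma$: one needs the subgraphs $\Lambda_i$ supporting the thick pieces to assemble into a genuine graph-theoretic splitting (either at a single cone vertex or across a non-clique subgraph), with no residual pieces failing to fit either pattern. Theorem~I is the key tool here: when no direct cone-vertex simplification applies, the canonical minimal relatively hyperbolic structure it provides on the complement (or on an appropriate subgraph) with strongly algebraically thick peripherals yields the amalgamation-type splitting required for operation (3), and induction on the proper peripheral subgraphs completes the argument.
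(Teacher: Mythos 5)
Your forward inclusion (closure operations preserve thickness) already contains a real error at operation (2). In Definition~\ref{defn:strongly_algebraically_thick} every member of the network must itself be strongly algebraically thick of lower order, but your proposed new piece $\langle v\rangle\times W_\Lambda$ need not be thick at all: $\Lambda$ is only assumed not to be a clique, so if $\Lambda$ consists of two non-adjacent vertices then $\langle v\rangle\times W_\Lambda\cong\mathbb Z_2\times D_\infty$ is virtually cyclic, hence not thick of any order. You also never verify the finite-index condition for the subgroup generated by your network. The paper's argument (Proposition~\ref{prop:thickgeneral}) sidesteps both problems by taking the two-element network $\{W_\Gamma,\, vW_\Gamma v\}$: both pieces are thick by induction, their intersection contains the infinite group $W_\Lambda$, and Lemma~\ref{lem:index2} (a reflection-subgroup fact) shows $\langle W_\Gamma\cup vW_\Gamma v\rangle$ has index at most $2$. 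This part of your proposal is fixable, but as written it does not go through.

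The reverse inclusion (thick $\Rightarrow$ constructible) is where the genuine gap lies. First, the subgroups appearing in a strongly algebraic thick network are arbitrary subgroups; there is no ``standard fact on parabolic subgroups'' allowing you to replace them by parabolics $W_{\Lambda_i}$, and you give no argument. Second, the dichotomy (a)/(b) --- that a thick graph either arises by coning off a non-clique link onto a smaller thick graph, or splits as a generalized union over a non-clique --- is precisely the content that needs proof, and your appeal to Theorem~I cannot supply it: if $W_\Gamma$ is thick it is \emph{not} relatively hyperbolic (\cite[Corollary~7.9]{BDM}), so Theorem~I applied to $\Gamma$ produces no peripheral structure and no splitting, while applying it to ``the complement or an appropriate subgraph'' is unspecified and, since in the paper Theorem~I and Theorem~II are established by the same mechanism, comes close to circularity. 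The paper instead argues by contraposition: if $\Gamma$ is not in the constructively defined class $\mathcal T$, one takes the collection $\mathbb J$ of maximal induced subgraphs lying in $\mathcal T$ and checks Caprace's criterion (RH1)--(RH3) of Theorem~\ref{a'} --- (RH2) and (RH3) hold because any violation would allow a maximal element of $\mathbb J$ to be enlarged using closure operations (3) and (2) respectively, and (RH1) follows from the base case of the class --- so $W_\Gamma$ is hyperbolic relative to proper thick parabolics and therefore not thick. Your induction would need an independent combinatorial argument forcing case (a) or (b), and that argument is missing.
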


Theorems~\ref{thmi:rel_hyp_graph} and~\ref{thmi:thick_char} together
imply that any thick right-angled Coxeter group is strongly
algebraically thick.  A special case of this is that $W_{\Gamma}$ is thick of order 0
if and only if the product of two infinite right-angled Coxeter groups
(see Proposition~\ref{prop:general_join} which generalizes a result
of Dani--Thomas \cite[Theorem~4.1]{DaniThomas:divcox}).

Figures~\ref{fig:graphs_in_T}~and~\ref{fig:graphs_not_in_T} illustrate
examples of graphs in and not in $\mathcal T$.  See also
Remark~\ref{rem:bounded_squares}.  The right-angled Coxeter groups
with polynomial divergence constructed by Dani--Thomas
in~\cite{DaniThomas:divcox} are strongly algebraically thick, as can
be verified either by observing that the corresponding graphs are in
$\mathcal T$, or by combining the fact that they have
subexponential divergence with Theorem~\ref{thmi:rel_hyp_graph} and
the exponential divergence of any relatively hyperbolic group.

\begin{figure}[h]
\begin{minipage}[b]{.5\textwidth}
\centering
\includegraphics[scale=0.5]{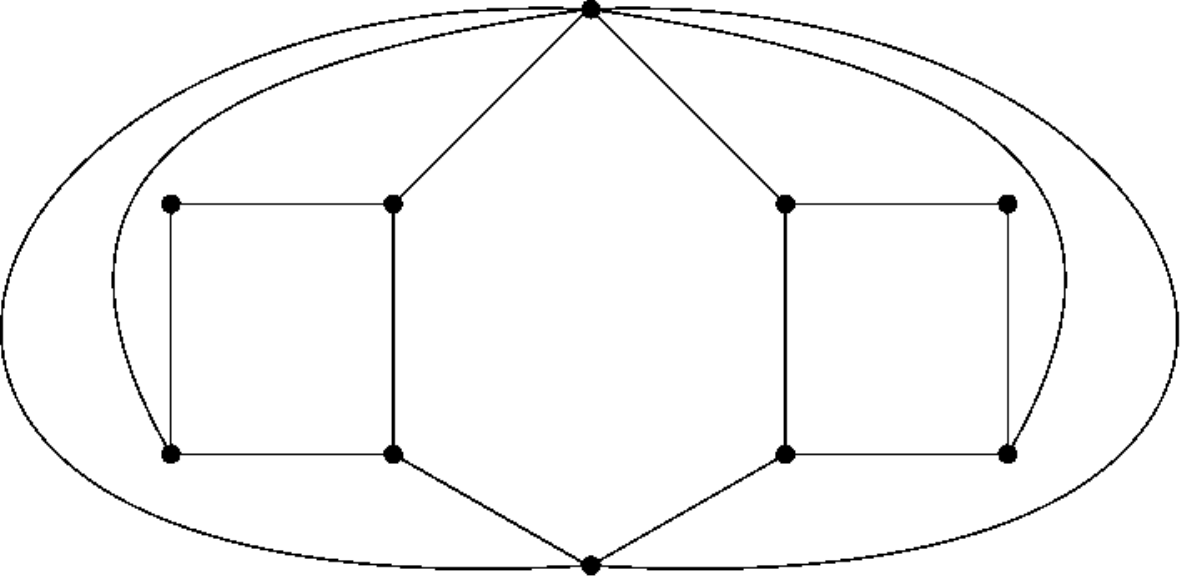}
\caption{Graph in $\mathcal T$.}\label{fig:graphs_in_T}
\end{minipage}
\hspace{-1cm}
\begin{minipage}[b]{.5\textwidth}
\centering
\includegraphics[scale=0.5]{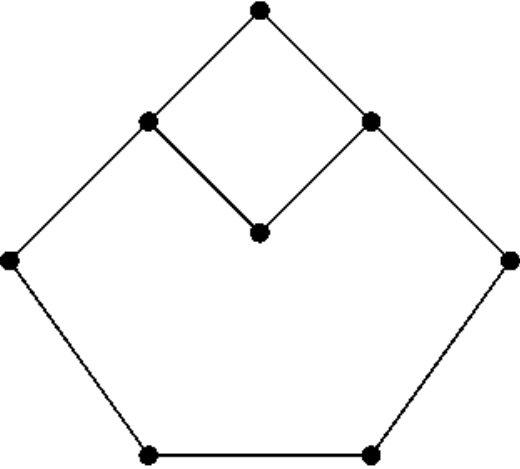}
\caption{Graph not in $\mathcal T$.}\label{fig:graphs_not_in_T}
\end{minipage}
\end{figure}

An important consequence of the above characterization of the class $\mathcal T$ is that it allows thickness/relative hyperbolicity to be detected algorithmically:

\begin{thmi}[Polynomial algorithm for relative hyperbolicity; Theorem~\ref{thm:algorithm}]\label{thmi:algorithm}
There exists a polynomial-time algorithm to decide if a given graph is
in $\mathcal T$, and hence whether a given right-angled Coxeter group
is (strongly algebraically) thick or relatively
hyperbolic.
\end{thmi}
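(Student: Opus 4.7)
The plan is to exploit the inductive characterization of $\mathcal{T}$ in Theorem~\ref{thmi:thick_char}, running its three generating rules ``in reverse'' on induced subgraphs of the input graph $\Gamma$. The crucial observation is that these rules admit a local interpretation inside $\Gamma$: whenever $\Gamma[S]$ is an induced subgraph already known to lie in $\mathcal{T}$, then for any $v \in V(\Gamma)\setminus S$ whose trace $N_\Gamma(v)\cap S$ is not a clique in $\Gamma$, the induced subgraph $\Gamma[S\cup\{v\}]$ is obtained by coning off the non-clique $\Gamma[N_\Gamma(v)\cap S]$ inside $\Gamma[S]$ and hence lies in $\mathcal{T}$ by Rule~(2); and whenever $\Gamma[S]$ and $\Gamma[T]$ both lie in $\mathcal{T}$ with $\Gamma[S\cap T]$ not a clique, the induced subgraph $\Gamma[S\cup T]$ lies in $\mathcal{T}$ by Rule~(3), because the ``arbitrary cross-edges'' clause of Rule~(3) absorbs whatever edges of $\Gamma$ happen to run between $S\setminus T$ and $T\setminus S$.

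With this observation in hand, I would design the following algorithm. Initialize a collection $\mathcal{S}$ of subsets of $V(\Gamma)$ to be the family of all vertex sets of induced $4$-cycles of $\Gamma$; these certify membership in $\mathcal{T}$ via Rule~(1) and can be enumerated in time $O(n^4)$. Then iterate to a fixed point: if there exist $S\in\mathcal{S}$ and $v\in V(\Gamma)\setminus S$ with $\Gamma[N_\Gamma(v)\cap S]$ not a clique, replace $S$ by $S\cup\{v\}$ (a \emph{cone step}); otherwise, if there exist $S,T\in\mathcal{S}$ with $\Gamma[S\cap T]$ not a clique, replace both $S$ and $T$ jointly by $S\cup T$ (a \emph{merge step}). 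Return ``$\Gamma\in\mathcal{T}$'' iff some set in $\mathcal{S}$ eventually equals $V(\Gamma)$; by Theorem~\ref{thmi:rel_hyp_graph}, this same answer decides thickness versus relative hyperbolicity of $W_\Gamma$.

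Correctness in the ``if'' direction is by construction: every set in $\mathcal{S}$ records a derivation of its induced subgraph in $\mathcal{T}$ via Rules~(1)--(3). For the converse, I would induct on the length of a derivation of $\Gamma$ in $\mathcal{T}$, maintaining the invariant that at each stage $\mathcal{S}$ contains, for every intermediate subderivation, a set containing the realized vertex subset of that subderivation. Monotonicity is essential here: enlarging $S$ or $T$ can only enlarge $N_\Gamma(v)\cap S$ and $S\cap T$, so the non-clique hypothesis triggering any step in the derivation is never destroyed by an earlier, more aggressive enlargement committed by the algorithm. For the polynomial-time bound, the potential function $\Phi(\mathcal{S})=2n|\mathcal{S}|-\sum_{S\in\mathcal{S}}|S|$ starts at $O(n^5)$ and strictly decreases at every iteration (by $1$ at a cone step, by at least $n$ at a merge step), so at most $O(n^5)$ iterations occur, each implementable in polynomial time by exhaustively scanning pairs and testing cliqueness in $O(n^2)$.

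The main obstacle is the completeness part of the correctness argument, namely the claim that greedy choices in the algorithm never foreclose a step needed by some hypothetical optimal derivation. This reduces to the monotonicity observation above, but must be set up carefully because an application of Rule~(3) in the derivation may, at the algorithmic level, already have been effected by a sequence of merges that has grown the algorithm's sets past the derivation's intermediate subgraphs. Once this monotone simulation is established, termination, polynomial complexity, and the final translation to the thickness/relative hyperbolicity dichotomy for $W_\Gamma$ follow cleanly from Theorems~\ref{thmi:rel_hyp_graph} and~\ref{thmi:thick_char}.
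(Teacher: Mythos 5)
Your proposal is essentially the paper's own algorithm: the paper likewise seeds the collection with all induced $K_{2,2}$ subgraphs and then iterates a coning subroutine (adding a vertex whose link in a current set contains a non-adjacent pair) and a union subroutine (merging two sets whose intersection contains a non-adjacent pair) until stabilization, accepting iff the whole graph is realized, with a polynomial bound obtained by the same kind of counting of productive steps. Your monotone-simulation argument for completeness and the explicit potential function supply details the paper leaves implicit, but the approach and the resulting conclusion (via Theorems~\ref{thmi:rel_hyp_graph} and~\ref{thmi:thick_char}) are the same.
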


\subsection*{Random graphs}\label{sec:intro_random}
We consider right-angled Coxeter groups on random graphs in the Erd\'{o}s--Renyi model~\cite{ErdosRenyi1}: $G(n,p(n))$ is the class of graphs on $n$ vertices with the probability measure corresponding to independently declaring each pair of vertices to be adjacent with probability $p(n)$.

An important result of Erd\'{o}s--Renyi states that a random graph is
asymptotically almost surely (a.a.s.) connected when $p(n)$ grows more
quickly that $\frac{n}{\log n}$ and is a.a.s.\ disconnected when
$p(n)=o(\frac{n}{\log n})$.  This implies that for slowly-growing
$p(n)$, when $\Gamma\in G(n,p(n))$, the right-angled Coxeter group
$W_{\Gamma}$ is a.a.s.\ a nontrivial free product, and hence
relatively hyperbolic.  In light of Theorem~\ref{thmi:rel_hyp_graph},
it is natural to wonder if there densities at which a random
right-angled Coxeter group is relatively hyperbolic but not a free
product.  The following gives a positive answer to this question; the
technical terms in this theorem will be defined in
Section~\ref{sec:random}.

\begin{thmi}[Low density, Theorem~\ref{thm:generic_relhyp}]\label{thmi:hyp_rel_free_abel}
Suppose $p(n)n\rightarrow\infty$ and $p(n)^6n^5\rightarrow0$.  Then for $\Gamma\in G(n,p(n))$, the group $W_{\Gamma}$ is a.a.s.\ hyperbolic relative to a nonempty collection of $D_{\infty}\times D_{\infty}$ subgroups, and the same holds for $W_{\Gamma'}$, where $\Gamma'\subseteq\Gamma$ is the giant component of $\Gamma$.
\end{thmi}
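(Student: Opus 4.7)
The plan is to combine Theorem~\ref{thmi:rel_hyp_graph} with elementary estimates on induced subgraphs in $G(n,p(n))$. By Theorem~\ref{thmi:rel_hyp_graph}, it suffices to verify a.a.s.\ that (i) $\Gamma$ contains at least one induced $K_{2,2}$, and (ii) no induced subgraph of $\Gamma$ on at least $5$ vertices lies in $\mathcal{T}$; together, (i) and (ii) force the collection $\mathbb{J}$ supplied by Theorem~\ref{thmi:rel_hyp_graph} to consist precisely of induced $K_{2,2}$ subgraphs, whose associated Coxeter groups are $W_{K_{2,2}} \cong D_\infty \times D_\infty$. Claim (i) is standard: the expected number of induced $4$-cycles in $G(n,p(n))$ is comparable to $n^4 p(n)^4$, which tends to $\infty$ since $np(n) \to \infty$ (we are well above the threshold $p \sim n^{-1}$ for $K_{2,2}$), and Chebyshev's inequality gives existence a.a.s.

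Claim (ii) is the core of the argument and rests on the following \emph{Reduction Lemma}: every $\Lambda \in \mathcal{T}$ with $|V(\Lambda)| \geq 5$ contains an induced $5$-vertex subgraph that itself lies in $\mathcal{T}$. I would prove this by induction on the construction of $\mathcal{T}$ given by Theorem~\ref{thmi:thick_char}. The coning case is immediate by the inductive hypothesis. For the union case $\Lambda = \Lambda_1 \cup_{\Gamma_0} \Lambda_2$, possibly with a set $F$ of added edges between $V(\Lambda_1) \setminus V(\Gamma_0)$ and $V(\Lambda_2) \setminus V(\Gamma_0)$: if some $\Lambda_i$ has at least $5$ vertices, the inductive hypothesis yields a $5$-vertex $\mathcal{T}$-subgraph inside it, and that subgraph remains induced in $\Lambda$ because no $F$-edge has both endpoints on one side. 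Otherwise $|V(\Lambda_1)| = |V(\Lambda_2)| = 4$, forcing $\Lambda_1 = \Lambda_2 = K_{2,2}$; the subcase $|V(\Gamma_0)| = 3$ yields $|V(\Lambda)| = 5$ so $\Lambda$ itself is the desired subgraph, while in the subcase $|V(\Gamma_0)| = 2$ a direct enumeration shows that for any choice of $F$, deleting any vertex of $V(\Lambda_1) \setminus V(\Gamma_0)$ leaves a $5$-vertex induced subgraph that is a cone of $K_{2,2}$ over some non-clique, hence in $\mathcal{T}$.

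Given the Reduction Lemma, (ii) reduces to ruling out $5$-vertex induced $\mathcal{T}$-subgraphs of $\Gamma$. Every $5$-vertex graph in $\mathcal{T}$ (being a cone of $K_{2,2}$ over a non-clique of size $\geq 2$) has at least $6$ edges, so the expected number of $5$-vertex subsets of $V(\Gamma)$ inducing at least $6$ edges is bounded by $\binom{n}{5} \binom{10}{6} p(n)^6 = O(n^5 p(n)^6)$, which tends to $0$ by hypothesis. Markov's inequality then yields (ii) a.a.s., completing the argument for $W_\Gamma$.

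The statement for the giant component $\Gamma'$ follows by applying (i) and (ii) to $\Gamma'$. Since $\Gamma'$ is an induced subgraph of $\Gamma$, (ii) passes immediately; for (i), a standard Erd\H{o}s--R\'enyi estimate---bounding the expected number of induced $K_{2,2}$'s whose vertex set is isolated from a linear fraction of $\Gamma$ via a factor $(1-p(n))^{\Theta(np(n))} = e^{-\Theta(np(n))}$---shows that a.a.s.\ essentially all induced $4$-cycles of $\Gamma$ lie in $\Gamma'$, which combined with (i) for $\Gamma$ gives an induced $K_{2,2}$ in $\Gamma'$. The main obstacle in the whole argument is the case analysis in the Reduction Lemma: the freedom in choosing the additional edges $F$ in the union construction requires careful verification that, for every possible $F$, some vertex deletion yields a graph in $\mathcal{T}$.
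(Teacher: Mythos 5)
Your argument is correct and follows the same overall strategy as the paper's proof of Theorem~\ref{thm:generic_relhyp}: exhibit induced squares, rule out any larger thick induced subgraph via a first-moment bound of order $n^5p^6$, and invoke the classification theorem. The difference is in how the combinatorial step is packaged. The paper uses Lemma~\ref{lem:5_vertices} (every graph in $\mathcal T$ other than $K_{2,2}$ contains a $K_{2,3}$ subgraph, not necessarily induced) and then kills $K_{2,3}$ via the Erd\H{o}s--R\'enyi estimate; you instead prove a ``Reduction Lemma'' by induction on the construction of $\mathcal T$ (every $\mathcal T$-graph on at least $5$ vertices contains an induced $5$-vertex $\mathcal T$-subgraph, and all such $5$-vertex graphs are cones of $K_{2,2}$ over a non-clique, hence have at least $6$ edges), and then kill $5$-sets spanning at least $6$ edges. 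Your case analysis checks out: in the union of two squares along a non-adjacent pair, the link of each remaining vertex in the opposite square always contains that non-adjacent pair, so deleting a vertex does leave a cone over a non-clique for every choice of added edges $F$; and the union of two squares along a path of length $2$ is itself such a cone. Since $K_{2,3}$ is exactly a $5$-vertex, $6$-edge graph, both routes reduce to the identical probabilistic input; your lemma is slightly stronger than needed (inducedness and membership in $\mathcal T$ are not required, only $6$ edges on $5$ vertices), but the structural induction is a clean alternative to the paper's direct ``maximal join'' argument. The one place where your sketch is weaker than the paper is the giant component: the paper gets all squares into $\Gamma'$ by citing Lemma~\ref{lem:unicyclics} (a.a.s.\ every non-giant component is a tree, hence square-free), whereas your isolation estimate needs care because the vertex set of $\Gamma'$ is itself random, so one cannot simply multiply by an independence factor; also the factor should read $(1-p(n))^{\Theta(n)}=e^{-\Theta(np(n))}$ rather than $(1-p(n))^{\Theta(np(n))}$. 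This can be repaired by a standard component-size computation (or by citing the tree structure of small components, as the paper does), so it is a fixable lacuna rather than a flaw in the approach. You may also want to record explicitly why the peripheral collection is nonempty (an induced square gives a $D_\infty\times D_\infty$, hence a $\mathbb Z^2$, obstructing hyperbolicity of $W_{\Gamma}$ and of $W_{\Gamma'}$), which your phrase ``(i) and (ii) force'' leaves implicit.
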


Intuitively, the probability of thickness should increase with the growth rate of $p(n)$, up to the point where $\Gamma$ is a.a.s.\ sufficiently dense that $W_{\Gamma}$ is either finite or virtually cyclic.  The following confirms this intuition.

\begin{thmi}[High density, Theorem~\ref{thm:high_density}]\label{thmi:high_density}
Suppose that $(1-p(n))n^2\rightarrow\alpha\in[0,\infty)$.  Then for $\Gamma\in G(n,p(n))$, the group $W_{\Gamma}$ is:
\begin{enumerate}
 \item finite with probability tending to $\beta=e^{-\alpha/2}$;
 \item virtually $\mathbb Z$ with probability tending to $\gamma=\frac{\alpha}{2}e^{-\alpha/2}$;
 \item virtually $\mathbb Z^{k},\,k\geq2$, and thus thick of order 0, with probability tending to $1-(\beta+\gamma)$.
\end{enumerate}
\end{thmi}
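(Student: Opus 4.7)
The plan is to pass to the complement graph $\overline{\Gamma}$, whose edges record the non-commutations among the generators of $W_\Gamma$. Setting $q(n)=1-p(n)$, we have $\overline{\Gamma}\in G(n,q(n))$ with $\binom{n}{2}q(n)\to\alpha/2$, so $q(n)\sim\alpha/n^2$. Let $X$ denote the number of edges of $\overline{\Gamma}$. The three cases of the theorem correspond to $X=0$, $X=1$, and $X\geq 2$, once we know that $\overline{\Gamma}$ is typically a matching.

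The structural input I would establish first is that $W_\Gamma$ is virtually abelian if and only if $\overline{\Gamma}$ is a matching, i.e.\ every vertex of $\overline{\Gamma}$ has degree at most $1$. If $\overline{\Gamma}$ consists of $k$ disjoint edges together with $n-2k$ isolated vertices, the defining presentation directly gives
\[
W_\Gamma\;\cong\; D_\infty^{k}\times(\mathbb{Z}/2)^{n-2k},
\]
which is virtually $\mathbb{Z}^k$. Conversely, if some vertex $v$ has two non-neighbours $u,w$ in $\Gamma$, then the subgroup $\langle u,v,w\rangle$ of $W_\Gamma$ is a quotient of $\mathbb{Z}/2\ast\mathbb{Z}/2\ast\mathbb{Z}/2$ in which at most the single extra commutation $[u,w]=1$ is imposed; in either case the resulting group is a free product of the form $A\ast B$ with $|A|\geq 3$ and $|B|\geq 2$, which contains a non-abelian free subgroup, so $W_\Gamma$ is not virtually abelian.

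Two short probabilistic estimates finish the argument. Since $X\sim\mathrm{Binomial}\bigl(\binom{n}{2},q(n)\bigr)$ with mean tending to $\alpha/2$, the classical Poisson limit theorem yields $X\Rightarrow\mathrm{Poisson}(\alpha/2)$; in particular $\Pr(X=0)\to e^{-\alpha/2}=\beta$ and $\Pr(X=1)\to(\alpha/2)\,e^{-\alpha/2}=\gamma$. Meanwhile, the expected number of ordered pairs of edges of $\overline{\Gamma}$ sharing a vertex is at most $n^{3}q(n)^{2}=O(1/n)\to 0$, so by Markov's inequality $\overline{\Gamma}$ is a.a.s.\ a matching.

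Combining: on the a.a.s.\ event that $\overline{\Gamma}$ is a matching, the three cases $X=0$, $X=1$, $X\geq 2$ give $W_\Gamma$ finite, virtually $\mathbb{Z}$, and virtually $\mathbb{Z}^k$ with $k\geq 2$ respectively, with limiting probabilities $\beta$, $\gamma$, and $1-\beta-\gamma$. In the last case $\overline{\Gamma}$ is disconnected, so $\Gamma$ splits as a non-trivial join of two non-complete induced subgraphs, and each factor of $W_\Gamma$ is an infinite right-angled Coxeter group; by the join characterization of order-$0$ thickness recorded in Proposition~\ref{prop:general_join} (generalizing Dani--Thomas), this forces $W_\Gamma$ to be thick of order $0$. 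The main (and very modest) obstacle is the structural Step~1, which is a short case analysis of the defining Coxeter presentation; the probabilistic ingredients are entirely routine.
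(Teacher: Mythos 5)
Your proposal is correct and follows essentially the same route as the paper: both arguments reduce to the observation that the ``missing'' edges (your complement graph $\overline{\Gamma}$) asymptotically almost surely form a matching of bounded size, identify $W_\Gamma$ as $D_\infty^k\times(\mathbb{Z}/2)^{n-2k}$ accordingly, and compute the limiting probabilities of $0$, $1$, or $\geq 2$ missing edges. The only differences are cosmetic: you invoke the Poisson limit theorem and a first-moment count of paths of length two in $\overline{\Gamma}$, where the paper computes the binomial asymptotics directly and compares the counts of vertices lying in exactly one versus at least one missing edge, and you spell out the converse direction (a vertex with two non-neighbours forces a non-abelian free subgroup) that the paper leaves implicit.
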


The following describes the situation at a natural choice of ``intermediate''~$p(n)$:

\begin{thmi}[Intermediate density]\label{thmi:constant_density}
For $\Gamma\in G(n,\frac{1}{2})$, the group $W_{\Gamma}$ is a.a.s.\ thick.
\end{thmi}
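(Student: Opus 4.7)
The plan is to invoke Theorems~\ref{thmi:rel_hyp_graph} and~\ref{thmi:thick_char}: it suffices to show that a random $\Gamma\in G(n,1/2)$ lies in the combinatorial class $\mathcal{T}$ almost surely. I would build $\Gamma$ in two layers by iterated application of rule~(2) of Theorem~\ref{thmi:thick_char} (coning off a non-clique induced subgraph). As a base case, a standard first/second moment argument shows that $\Gamma$ a.a.s.\ contains an induced $K_{2,2}$: the expected number of induced $K_{2,2}$'s is $3\binom{n}{4}2^{-6}=\Theta(n^{4})$, and the variance is controlled in the usual way. Fix one such $K_{2,2}$ and call its vertex set $V_0$; then $\Gamma[V_0]\in\mathcal{T}$.

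For the first layer, let $A:=\{v\in V(\Gamma)\setminus V_0 : v\text{'s link in }\Gamma[V_0]\text{ is non-clique}\}$. Since $K_{2,2}$ has exactly two non-edges (its diagonals), $v\in A$ iff $v$ is adjacent to both endpoints of at least one diagonal, an event of probability $1/4+1/4-1/16=7/16$, independently across $v$; so $|A|=\Theta(n)$ a.a.s.\ by Chernoff. Crucially, non-cliqueness of a link is monotone under adding vertices to the host graph, so for every $v\in A$ and every $V$ with $V_0\subseteq V\subseteq V_0\cup A$ and $v\notin V$, the link of $v$ in $\Gamma[V]$ remains non-clique. I would therefore cone off the vertices of $A$ one at a time via rule~(2), arriving at $\Gamma[V_1]\in\mathcal{T}$, where $V_1:=V_0\cup A$.

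For the second layer, set $B:=V(\Gamma)\setminus V_1$; I would show that a.a.s.\ every $v\in B$ has non-clique link in $\Gamma[V_1]$. By definition of $B$ the link of $v$ in $\Gamma[V_0]$ is already a clique, so the link of $v$ in $\Gamma[V_1]$ is automatically non-clique provided $L_v:=N(v)\cap A$ is non-clique in $\Gamma[A]$. The edges from $v$ to $A$, the edges between $V(\Gamma)\setminus V_0$ and $V_0$ (which determine membership in $A$), and the edges within $A$ live on three pairwise-disjoint sets of vertex pairs and are therefore mutually independent in $\Gamma$. Conditionally on $|A|=a$, this means $L_v$ has Binomial$(a,1/2)$ size and $\Gamma[A]\sim G(a,1/2)$ independently of $L_v$, so the conditional probability that $L_v$ is a clique in $\Gamma[A]$ equals $E\bigl[2^{-\binom{|L_v|}{2}}\bigr]$, which is $2^{-\Omega(n^{2})}$ once we use $|A|,|L_v|=\Theta(n)$ a.a.s.\ by Chernoff. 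A union bound over the at most $n$ vertices of $B$ establishes the claim, and then coning off the vertices of $B$ one at a time by rule~(2) (non-cliqueness remains monotone throughout) yields $\Gamma\in\mathcal{T}$ a.a.s. The main obstacle will be cleanly isolating the three disjoint edge sets to justify the independence used in the second layer; once that is done, the proof reduces to a transparent two-step coning argument.
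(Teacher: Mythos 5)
Your proof takes a genuinely different — and arguably more transparent — route than the paper's. The paper establishes Theorem~\ref{thm:constant_density} via the recursive inequality $\pi_{2n}\leq\pi_n^2+f(n)$ of Lemma~\ref{lem:estimates}, a delicate analysis of accumulation points of $(\pi_n)$, and the computer-computed quantities $t(9)$, $c(9)$, $\pi_9$ needed to seed the recursion at $n=18$ with $\alpha^2+\beta<\alpha$. Your two-layer coning argument replaces all of this with an explicit $K_{2,2}$ seed, Chernoff bounds, and a union bound, eliminating the need for any numerical computation. The combinatorial engine — that one can add a vertex to a graph in $\mathcal T$ whenever its link is non-clique — is the same mechanism the paper hides inside the proof of Lemma~\ref{lem:estimates}, but you apply it directly rather than recursively.

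There is, however, one genuine gap, and it is not the one you flag. The sentence ``fix one such $K_{2,2}$ and call its vertex set $V_0$'' silently conditions on information that depends on the whole graph: once $V_0$ is selected by inspecting $\Gamma$, it is no longer true that the edges from $v\notin V_0$ to $V_0$ are unconditioned, and the ``probability $7/16$, independently across $v$'' computation needs justification. The standard repair is to not pick $V_0$ from the graph at all, but to union-bound over fixed $4$-tuples. For a fixed $V_0$, let $E_{V_0}$ be the event $\Gamma[V_0]\cong K_{2,2}$; this has probability $3/64$ and depends only on the six internal pairs of $V_0$, so conditionally on $E_{V_0}$ your two-layer argument shows $\prob\bigl[\Gamma\notin\mathcal T\;\text{and}\;E_{V_0}\bigr]\leq\frac{3}{64}\cdot n\,e^{-cn}$ for some $c>0$ (the $n$ is the union bound over $v\in B$). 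Since $\{\Gamma\notin\mathcal T\}\cap\{\exists\text{ induced }K_{2,2}\}\subseteq\bigcup_{V_0}\bigl(\{\Gamma\notin\mathcal T\}\cap E_{V_0}\bigr)$, summing over the $\binom{n}{4}$ quadruples gives a bound of order $n^5e^{-cn}\to0$, and $\prob[\nexists\ \text{induced } K_{2,2}]\leq(61/64)^{\lfloor n/4\rfloor}$ by examining disjoint blocks; together these yield $\pi_n\to0$. (The independence issue you do flag for the second layer is, by contrast, fine as stated once one conditions on $A$.) Finally, a small numerical slip: with $|L_v|\sim\text{Binomial}(\Theta(n),\tfrac12)$, the expectation $E\bigl[2^{-\binom{|L_v|}{2}}\bigr]$ is $e^{-\Theta(n)}$, not $2^{-\Omega(n^2)}$ — the lower Chernoff tail $\prob[\,|L_v|\text{ small}\,]$ dominates — but $e^{-\Theta(n)}$ is still ample for the union bound, so the conclusion is unaffected.
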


\begin{figure}[h]
     \labellist
     \small\hair 2pt
     \pinlabel {$0$} at 10 -3
     \pinlabel {$\frac{1}{n}$} at 38 -3
     \pinlabel {$\frac{log(n)}{n}$} at 66 -3
     \pinlabel {$n^{-\frac{5}{6}}$} at 104 -3
     \pinlabel {$n^{-\frac{n}{2(n-2)}}$} at 136 -3
     \pinlabel {$\frac{1}{2}$} at 174 -3
     \pinlabel {$1-\frac{\alpha}{n^{2}}$} at 295 -3
     \pinlabel {$1$} at 339 -3
     \pinlabel {Hyp. rel $D_{\infty}^{2}$} at 74 31
     \pinlabel {Thick} at 174 31
     \pinlabel {Infinite div.} at 37 50
     \pinlabel {Finite} at 338 50
     \pinlabel {Thick of order 0} at 295 31
     \pinlabel {with prob. >0} at 295 21
     \pinlabel {$\geq$ quad. div.} at 74 65
      \endlabellist
 \includegraphics[scale=1.0]{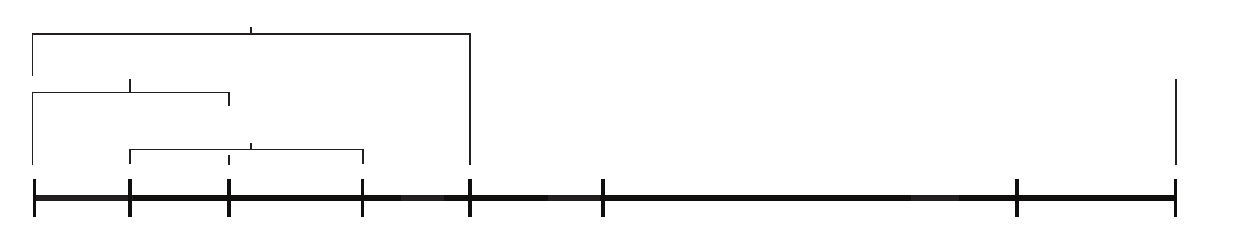}
\caption{The results of Section~\ref{sec:random} illustrated on the
same spectrum of densities as addressed conjecturally in Figure~\ref{fig:spectrum}.  The listed properties occur a.a.s.\ at
the given density, unless the specific asymptotic probability is
mentioned.}\label{fig:results3}
\end{figure}

One of our motivations for our study of random Coxeter groups was the
results of Charney and Farber on hyperbolicity of random right-angled
Coxeter groups~\cite{CharneyFarber}.  More recently, results have
been obtained about
cohomological properties of such random groups~\cite{DavisKahle:Random}.
Together with our results, this represents the beginning of a
systematic study of random Coxeter groups.

\subsection*{General Coxeter groups}\label{subsec:general_coxeter_intro}
In the Appendix, we generalize Theorem~\ref{thmi:rel_hyp_graph} and Theorem~\ref{thmi:thick_char} to all Coxeter groups; however, as shown by the example in Remark~\ref{rmk:no_generalization}, there is no characterization of strongly algebraically thick non-right-angled Coxeter groups purely in terms of the underlying graph of the free Coxeter diagram.

Theorem~\ref{thmi:rel_hyp_graph} generalizes as follows:

\begin{thmi}[Minimal relatively hyperbolic structures for Coxeter
    groups]\label{thmi:MinimalPeripheral}
Let $(W, S) $ be a Coxeter system. Then there is a
(possibly empty) collection $\jjj$ of subsets of $S$ enjoying the following properties:
\begin{enumerate}[(i)]
\item The parabolic subgroup $W_J$ is strongly algebraically thick  for every $J \in \jjj$.

\item $W$ is relatively hyperbolic with respect to $\ppp = \{W_J \; | \; J \in \jjj\}$.

\end{enumerate}

In particular $\ppp$ is a minimal relatively hyperbolic structure for $W$.
\end{thmi}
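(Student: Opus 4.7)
The plan is to define $\jjj$ explicitly as the collection of maximal subsets $J \subseteq S$ (ordered by inclusion) for which the standard parabolic subgroup $W_J$ is strongly algebraically thick. Property (i) is then automatic from the definition, and the minimality assertion is a direct consequence of \cite[Corollary~4.7]{BDM}: any thick subgroup of a relatively hyperbolic group is conjugate into a peripheral subgroup, so a relatively hyperbolic structure whose peripherals are all thick is automatically minimal. Thus the substantive content is clause (ii), that $W$ is hyperbolic relative to $\ppp = \{W_J : J \in \jjj\}$.

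I would establish (ii) by strong induction on $|S|$. If $W$ itself is strongly algebraically thick, then $\jjj = \{S\}$ and the statement is degenerate; if $W$ is (word) hyperbolic, then $\jjj = \emptyset$ and there is nothing to prove. Otherwise, the appendix's generalization of Theorem~\ref{thmi:thick_char} to arbitrary Coxeter systems — which, as flagged in Remark~\ref{rmk:no_generalization}, must be formulated at the level of the full Coxeter matrix rather than the underlying graph — fails to produce $(W,S)$ by the allowed thickening operations. The failure translates into a combinatorial decomposition of the Coxeter diagram exhibiting proper standard parabolics $W_{J_1},\dots,W_{J_k}$ with respect to which $W$ is relatively hyperbolic, via a Caprace-style criterion for relative hyperbolicity of a Coxeter group with respect to standard parabolics (to be developed in the appendix).

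Having obtained such a preliminary peripheral structure, I would apply the inductive hypothesis to each $W_{J_i}$ to obtain its own relatively hyperbolic structure with strongly algebraically thick parabolic peripherals. Using the standard principle that relative hyperbolicity is preserved when each peripheral is replaced by its own relatively hyperbolic structure (cf.\ \cite[Theorem~1.8]{DrutuSapir:TreeGraded} and \cite[Corollary~4.7]{BDM}), one assembles a relatively hyperbolic structure for $W$ whose peripherals are thick standard parabolic subgroups. Finally, one enlarges each such $J$ to a maximal subset along which thickness persists; since thickness is preserved under the inductive operations generating the class of thick parabolics, this enlargement terminates at a well-defined element of $\jjj$, and the resulting peripheral system is precisely $\ppp$.

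The main obstacle is the inductive step: producing the relatively hyperbolic structure exactly when thickness fails. In the right-angled setting this was packaged cleanly by the class $\mathcal{T}$ through Theorems~\ref{thmi:rel_hyp_graph} and~\ref{thmi:thick_char}, but Remark~\ref{rmk:no_generalization} shows that no purely graph-theoretic characterization is available for general Coxeter groups; one must detect thickness-obstructing splittings using the full Coxeter matrix. Formulating the correct combinatorial dichotomy on Coxeter diagrams — so that failure of the thickness-construction rules forces a splitting visible to Caprace's relative hyperbolicity criterion — is the crux of the argument and is where the appendix's joint work with Caprace is essential.
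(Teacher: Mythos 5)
Your definition of $\jjj$ (maximal $J\subseteq S$ with $W_J$ strongly algebraically thick), your treatment of property (i), and your derivation of minimality from \cite[Corollary~4.7]{BDM} all match the paper. But your route to property (ii) diverges from the paper's and has two genuine gaps.

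The paper does not induct on $|S|$. Instead it verifies Caprace's conditions (RH1)--(RH3) from Theorem~\ref{a'} \emph{directly} for the collection $\jjj$ of maximal thick subsets, and the verification is short precisely because the closure rules defining $\coxeterthick$ dovetail with those conditions: (RH1) holds because each irreducible affine subset of rank $\geq 3$, and each product $K_1\cup K_2$ of commuting irreducible non-spherical subsets, lies in $\coxeterthick_0$ and hence is contained in some maximal $J$; (RH2) holds because if $J_1\cap J_2$ were non-spherical then rule~(3) would put $J_1\cup J_2$ in $\coxeterthick$, contradicting maximality of $J_1$ or $J_2$; and (RH3) holds because if $s\in K^\perp\setminus J$ for some non-spherical $K\subseteq J$ then rule~(2) would put $J\cup\{s\}$ in $\coxeterthick$, again contradicting maximality. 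No induction, no intermediate relatively hyperbolic structure, no final enlargement step.

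The gaps in your version are, first, that you never actually produce the ``preliminary'' peripheral structure that the inductive step needs: you assert that failure of the thickening operations ``translates into a combinatorial decomposition'' witnessing relative hyperbolicity, but you give no mechanism for this translation, and in fact the only such mechanism available is Caprace's criterion itself --- at which point you may as well apply it directly to $\jjj$. Second, your final move --- ``enlarge each $J$ to a maximal subset along which thickness persists'' --- is not a legitimate operation on relatively hyperbolic structures: enlarging peripheral subgroups does not in general preserve relative hyperbolicity, and in the Coxeter setting it threatens exactly the condition (RH2) that pairwise intersections of peripherals be spherical. Justifying that the enlarged collection still satisfies (RH2) is tantamount to the computation the paper does in the first place, so the induction and the Dru\c{t}u--Sapir assembly theorem buy you nothing here.
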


Theorem~\ref{thmi:thick_char} takes the following form for general Coxeter groups.  Note that thickness is now described using a class of labelled graphs instead of a class of graphs.

\begin{thmi}[Classification of thick Coxeter groups]\label{thmi:thickgeneral}
The class $\coxeterthick$ of Coxeter systems $(W,S)$ for which $W$ is strongly algebraically thick is the smallest class satisfying:
\begin{enumerate}
 \item $\coxeterthick$ contains the class $\coxeterthick_0$ of all irreducible \emph{affine} Coxeter systems $(W,S)$ with $S$ of cardinality~$\geq 3$, as well as all Coxeter systems of the form $(W,S_1 \cup S_2)$ with  $W_{S_1}, W_{S_2}$ irreducible non-spherical and $[W_{S_1}, W_{S_2}]=1$.
 \item Suppose that $(W, S \cup {s})$ is such that  ${s}^\perp$ is non-spherical and $(W_S, S)$ belongs to $\coxeterthick$. Then $(W, S \cup {s})$ belongs to $\coxeterthick$.
 \item Suppose that $(W,S)$ has the property that there exist $S_1,S_2\subseteq S$ with $S_1\cup S_2=S$, $(W_{S_1},S_1), (W_{S_2},S_2)\in \coxeterthick$ and  $W_{S_1\cap S_2}$ non-spherical. Then $(W,S)\in \coxeterthick$.
\end{enumerate}

\end{thmi}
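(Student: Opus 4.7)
The plan is to establish the two inclusions separately: first that every Coxeter system built by rules (1)--(3) is strongly algebraically thick, and second that every strongly algebraically thick Coxeter system can be so built.

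For the first (easy) inclusion, I would verify that each constructor preserves strong algebraic thickness. For the base class $\coxeterthick_0$: an irreducible affine Coxeter system $(W,S)$ with $|S|\geq 3$ has $W$ virtually $\mathbb Z^{|S|-1}$ with $|S|-1\geq 2$, hence $W$ has linear divergence and is thick of order $0$; for a system of the form $(W, S_1 \cup S_2)$ with $W_{S_1}$ and $W_{S_2}$ infinite and commuting, any two points of $W$ can be joined by a two-segment $L^{1}$-path through $W_{S_1}\cdot W_{S_2}$, again giving linear divergence and order-$0$ thickness (this is the Coxeter-theoretic analogue of Proposition~\ref{prop:general_join}). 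For constructor (2), the parabolic $W_{\{s\}\cup s^{\perp}}\cong \langle s\rangle \times W_{s^\perp}$ is strongly algebraically thick of order $0$, the parabolic $W_S$ is thick by hypothesis, these two parabolics intersect in $W_{s^\perp}$, which is non-spherical and hence has infinite diameter in any word metric, and their union is all of $W_{S\cup\{s\}}$; this is precisely the data needed to certify that $W_{S\cup\{s\}}$ is a thick network of its thick subgroups, raising the order by at most one. Constructor (3) is entirely analogous, using $W_{S_1\cap S_2}$ as the infinite-diameter overlap.

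For the second (hard) inclusion, I would argue by induction on $|S|$ that any strongly algebraically thick $(W,S)$ lies in $\coxeterthick$. If $(W,S)$ is reducible, say $W=W_{S_1}\times W_{S_2}$ with $S=S_1\sqcup S_2$, then both factors must be non-spherical (otherwise $W$ is quasi-isometric to a proper parabolic, reducing to a smaller case which, if thick, is already in $\coxeterthick$ by induction); so either $|S_i|\geq 2$ for both $i$, in which case $(W,S)$ is directly in $\coxeterthick_0$, or we use constructor (2) to adjoin the single generator forming the spherical factor. If $(W,S)$ is irreducible and thick, I would invoke Theorem~\ref{thmi:MinimalPeripheral}: since $W$ is strongly algebraically thick, the minimality clause combined with the fact that thick subgroups of relatively hyperbolic groups are conjugate into peripherals forces $\jjj = \{S\}$, i.e., $W$ admits no proper relatively hyperbolic structure with thick peripherals. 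The task is then to locate either an irreducible affine subsystem of rank $\geq 3$, or a proper thick parabolic $W_{S'}$ together with an extra generator $s\in S\setminus S'$ having $s^{\perp}\cap S'$ non-spherical (triggering constructor (2)), or a decomposition $S=S_1\cup S_2$ along a non-spherical parabolic (triggering (3)), inside the given thick Coxeter system.

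The main obstacle will be the last step of the inductive case: given only the fact that $(W,S)$ is strongly algebraically thick and irreducible, one has to actually produce a proper thick parabolic subsystem $W_{S'}$ to which the inductive hypothesis can be applied. Here I would use the structure theory of Coxeter groups—in particular the characterization of spherical and affine parabolics, Krammer's and Caprace's results on parabolic closures, and Deodhar-type reflection combinatorics—to argue that if $(W,S)$ admits no non-trivial decomposition (3) and no generator removable via (2), then every proper parabolic of $W$ is spherical or of restricted type, forcing $(W,S)$ to already lie in the affine/product base class $\coxeterthick_0$. In effect, this is where the Coxeter-theoretic input of the appendix enters most heavily, and it is the reason one cannot hope for a purely graph-theoretic statement (cf.\ Remark~\ref{rmk:no_generalization}); the combination of the minimality of $\ppp$ in Theorem~\ref{thmi:MinimalPeripheral} with the classification of small thick Coxeter systems is what ultimately closes the induction.
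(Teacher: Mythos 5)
Your verification of constructor (2) contains a step that fails. You take $W_{\{s\}\cup s^{\perp}}\cong\langle s\rangle\times W_{s^{\perp}}$ as one of the pieces of your network and assert it is strongly algebraically thick of order $0$. But the hypothesis of rule (2) only makes $s^{\perp}$ non-spherical, i.e.\ $W_{s^{\perp}}$ infinite; it may well be hyperbolic. For instance, if $W_S\cong D_{\infty}\times D_{\infty}$ (which lies in $\coxeterthick_0$) and $s^{\perp}$ consists of the two generators of one $D_{\infty}$ factor, then $\langle s\rangle\times W_{s^{\perp}}\cong\mathbb Z_2\times D_{\infty}$ is virtually $\mathbb Z$, hence not wide and not thick of any order, so it cannot serve as a member of the collection $\mathcal H$ in Definition~\ref{defn:strongly_algebraically_thick}. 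The paper's proof of Proposition~\ref{prop:thickgeneral} uses a different network precisely to avoid this: by Lemma~\ref{lem:index2}, $\langle W_S\cup sW_Ss\rangle$ has index at most $2$ in $W_{S\cup\{s\}}$, and the two pieces $W_S$ and $sW_Ss$ are both thick (by hypothesis) and intersect in a subgroup containing the infinite group $W_{s^{\perp}}$. Your treatment of the base class and of constructor (3) is fine (though in the reducible case of your induction, ``directly in $\coxeterthick_0$'' needs the factors grouped into irreducible non-spherical components first).

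More seriously, the hard inclusion (thick $\Rightarrow$ constructible by rules (1)--(3)) is not actually proved: you explicitly leave the key inductive step --- producing a proper thick parabolic or forcing $(W,S)$ into the base class --- to unspecified results on parabolic closures, and invoking Theorem~\ref{thmi:MinimalPeripheral} at that point is essentially circular, since in the paper the proof of that theorem (Theorem~\ref{thm:MinimalPeripheral+}) \emph{is} the missing argument. The paper closes this direction with no induction on $|S|$ and no parabolic-closure machinery: let $\jjj$ be the collection of \emph{maximal} subsets $J\subseteq S$ with $(W_J,J)\in\coxeterthick$. Rule (1) gives (RH1); maximality together with rule (3) gives (RH2), since a non-spherical intersection of two distinct maximal sets would let you merge them; maximality together with rule (2) gives (RH3), since an element $s\in K^{\perp}\setminus J$ could be adjoined to $J$. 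By Caprace's criterion (Theorem~\ref{a'}), if $S\notin\jjj$ then $W$ is hyperbolic relative to the proper parabolics $\{W_J: J\in\jjj\}$, hence not strongly algebraically thick (by \cite[Corollary~7.9]{BDM}, or via intrinsic horosphericity as in Corollary~\ref{cor:charThick}). Thus thickness forces $S\in\jjj$, i.e.\ $(W,S)\in\coxeterthick$. This maximality-plus-(RH1)--(RH3) argument is the idea your sketch is missing, and without it (or a worked-out substitute) the classification is only proved in one direction.
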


We also introduce the notion, which we feel will be of independent interest, of an \emph{intrinsically horospherical} group, i.e., one for which every proper isometric action of $\Gamma$ on a proper hyperbolic geodesic metric space fixes a unique point at infinity.  Any group $G$ admits a collection of maximal intrinsically horospherical subgroups, and any relatively hyperbolic structure on $G$ has the property that every maximal intrinsically horospherical subgroup is conjugate into a peripheral subgroup.  We show that any thick group is intrinsically horospherical.  In the case of Coxeter groups, we say more:

\begin{cori}\label{cori:charThick}
Let $(W, S)$ be a Coxeter system. Then the following conditions are equivalent:
\begin{enumerate}[(I)]
\item $(W,S)$ is in $\coxeterthick$

\item $W$ is strongly algebraically thick;

\item $W$ is intrinsically horospherical;

\item $W$ is not relatively hyperbolic with respect to any family of proper subgroups.

\item $W$ is not relatively hyperbolic with respect to any family of
proper Coxeter-parabolic subgroups.

\end{enumerate}

\end{cori}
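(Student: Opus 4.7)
The plan is to prove the five conditions equivalent by going around the cycle (I) $\Rightarrow$ (II) $\Rightarrow$ (III) $\Rightarrow$ (IV) $\Rightarrow$ (V) $\Rightarrow$ (I). The equivalence (I) $\Leftrightarrow$ (II) is essentially a tautology, since Theorem~\ref{thmi:thickgeneral} defines $\coxeterthick$ to be precisely the class of Coxeter systems whose underlying group is strongly algebraically thick. Likewise (IV) $\Rightarrow$ (V) is immediate: Coxeter-parabolic subgroups form a particular class of proper subgroups, so ruling out all families of proper subgroups rules out the Coxeter-parabolic ones in particular.

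For (II) $\Rightarrow$ (III), I appeal to the general principle, established separately in the paper, that every strongly algebraically thick group is intrinsically horospherical. Heuristically, the inductive network-of-cosets structure underlying thickness carries enough large-scale two-dimensional spread to prevent all of $W$ from fixing a unique common point at infinity in any proper action on a proper Gromov hyperbolic geodesic space. For (III) $\Rightarrow$ (IV), suppose for contradiction that $W$ were relatively hyperbolic with respect to some family $\ppp$ of proper subgroups. Then the Groves--Manning cusped space provides a proper isometric action of $W$ on a proper Gromov hyperbolic geodesic space whose limit set is the Bowditch boundary, which contains more than one point whenever $\ppp$ consists of proper subgroups; in particular, $W$ does not fix a unique point at infinity, contradicting (III).

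The substantive step is the closing implication (V) $\Rightarrow$ (I), which uses Theorem~\ref{thmi:MinimalPeripheral} contrapositively. Assume $(W,S) \notin \coxeterthick$, so that by the already-verified (I) $\Leftrightarrow$ (II), $W$ is not strongly algebraically thick. Theorem~\ref{thmi:MinimalPeripheral} produces a collection $\jjj$ of subsets of $S$ with each $W_J$ (for $J \in \jjj$) strongly algebraically thick, and $W$ relatively hyperbolic with respect to $\ppp = \{W_J : J \in \jjj\}$. Since $W = W_S$ is not thick while every $W_J$ is, no $J \in \jjj$ can equal $S$, so each $W_J$ is a proper Coxeter-parabolic subgroup; and in the degenerate case $\jjj = \emptyset$, $W$ is hyperbolic, which is already a relatively hyperbolic structure with a (vacuously Coxeter-parabolic) family of proper subgroups. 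Either outcome directly contradicts (V), closing the cycle.

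The main obstacle lies in the input (II) $\Rightarrow$ (III), which depends on the earlier general implication that thickness implies intrinsic horosphericity rather than on any Coxeter-specific feature; once that is in hand, the remaining implications follow cleanly from Theorems~\ref{thmi:thickgeneral} and~\ref{thmi:MinimalPeripheral}, together with the standard cusped-space model for relatively hyperbolic groups. The only genuine bookkeeping is in the final step, where one must track the possibility that $\jjj$ is empty versus nonempty to see that a contradiction with (V) is reached in both cases.
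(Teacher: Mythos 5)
Your cycle (I)$\Rightarrow$(II)$\Rightarrow$(III)$\Rightarrow$(IV)$\Rightarrow$(V)$\Rightarrow$(I) has the same architecture as the paper's proof of Corollary~\ref{cor:charThick}, and several steps are fine as you give them: (II)$\Rightarrow$(III) is Proposition~\ref{prop:Thick=>Periph}, your cusped-space argument for (III)$\Rightarrow$(IV) fleshes out what the paper dismisses as straightforward, and (IV)$\Rightarrow$(V) is trivial. The genuine gap is your treatment of (I)$\Leftrightarrow$(II) as ``essentially a tautology.'' In the paper, $\coxeterthick$ is \emph{defined} inductively (Section~\ref{subsec:appthickcoxeter}) by the three closure operations; the assertion that this inductively defined class coincides with the class of strongly algebraically thick Coxeter systems is exactly Theorem~\ref{thmi:thickgeneral}, whose nontrivial half, (II)$\Rightarrow$(I), is precisely what the present corollary is supposed to deliver (the easy half, (I)$\Rightarrow$(II), is Proposition~\ref{prop:thickgeneral}). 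You then use the hard half inside your closing step: to prove (V)$\Rightarrow$(I) you assume $(W,S)\notin\coxeterthick$, deduce that $W$ is not strongly algebraically thick ``by the already-verified (I)$\Leftrightarrow$(II),'' and need this non-thickness to rule out $S\in\jjj$, i.e.\ to see that the peripherals supplied by Theorem~\ref{thmi:MinimalPeripheral} are proper. At that point in the cycle only (I)$\Rightarrow$(II) is available, so the argument is circular; and if instead you take $\coxeterthick$ to mean the class of thick systems by fiat, then the identification with the inductively defined class is never proved at all.

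The gap is repairable, and the paper's own argument shows how: use the explicit version, Theorem~\ref{thm:MinimalPeripheral+}, in which $\jjj$ consists of the \emph{maximal} subsets $J\subseteq S$ with $(W_J,J)\in\coxeterthick$. Then $(W,S)\notin\coxeterthick$ immediately gives $S\notin\jjj$, hence $\jjj\neq\{S\}$ and every $W_J$ with $J\in\jjj$ is a proper parabolic subgroup; the theorem then produces a relatively hyperbolic structure with proper, strongly algebraically thick Coxeter-parabolic peripherals (or shows $W$ is hyperbolic when $\jjj=\emptyset$), contradicting (V) in either case. This makes no appeal to non-thickness of $W$, hence no use of (II)$\Rightarrow$(I), and the cycle closes legitimately.
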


\subsection*{Outline}\label{subsec:outline}
In Section~\ref{sec:preliminaries}, we discuss background on Coxeter
groups, thickness, and divergence.
Sections~\ref{sec:thick_rel_hyp},~\ref{sec:random},
and~\ref{sec:algorithm} are devoted to right-angled Coxeter groups: in
the second section, we treat Theorems~\ref{thmi:rel_hyp_graph}
and~\ref{thmi:thick_char}.  In the third section, we study
right-angled Coxeter groups presented by random graphs, dealing in
particular with
Theorems~\ref{thmi:hyp_rel_free_abel},~\ref{thmi:high_density},
and~\ref{thmi:constant_density}.  In the fourth section, we produce an
algorithm for testing whether a given graph is in $\mathcal T$.  We
also include source code containing an implementation of a refined
version of this algorithm; this program is needed for a computation in
the proof of Theorem~\ref{thmi:constant_density}. (This source code
is available from the authors'
web pages and on the arXiv.)  In the Appendix, we
study arbitrary Coxeter groups and introduce the notion of intrinsic
horosphericity; in particular, we prove
Theorems~\ref{thmi:MinimalPeripheral} and~\ref{thmi:thickgeneral}
and Corollary~\ref{cori:charThick}.

\subsection*{Acknowledgments}\label{subsec:acknowledgements}
M.H. and A.S. thank the organizers of the conference Geometric and
Analytic Group Theory (Ventotene 2013). We thank Kaia Behrstock for
her help making Figure~\ref{fig:spectrum}.

\section{Preliminaries}\label{sec:preliminaries}
In this section, we review definitions and facts related to Coxeter groups, divergence, and thick metric spaces.  A comprehensive discussion of Coxeter groups can be found in~\cite{Davis:book}.  The notion of divergence used here is due to Gersten~\cite{GerstenDivergence}.  Our consideration of divergence in the setting of Coxeter groups was motivated largely by the discussion in~\cite{DaniThomas:divcox}, and to some extent by questions about divergence in cubulated groups (of which Coxeter groups are examples) raised in~\cite{BehrstockHagen:cubulated1}.  Thick spaces and groups were introduced in~\cite{BDM}, and we also refer to results of~\cite{BehrstockDrutu:thick2}.

\subsection{Background on Coxeter groups}\label{subsec:coxeter_background}
Throughout this paper, we confine our discussion to finitely-generated Coxeter groups.  A \emph{Coxeter group} is a group of the form $$\langle\mathcal S\mid (st)^{m_{st}}\,\co\,s,t\in\mathcal S\rangle,$$ where each $m_{ss}=1$ and for $s\neq t$, either $m_{st}\geq 2$ or there is no relation between $s,t$ of this form. Also, $m_{st}=m_{ts}$ for each $s,t\in S$. The pair $(W,\mathcal S)$ is a \emph{Coxeter system}.

The Coxeter group $W$ is \emph{reducible} if there are nonempty sets
$\mathcal S_1,\mathcal S_2\subset\mathcal S$ such that $\mathcal
S=\mathcal S_1\sqcup\mathcal S_2$, and for all $s_1\in\mathcal
S_2,s_2\in\mathcal S_2$, we have $m_{s_1s_2}=2$.  If $W$ is not
reducible, then it is \emph{irreducible}.  The Coxeter system
$(W,\mathcal S)$ is said to be \emph{(ir-)reducible} if $W$ has the
corresponding property.

To the Coxeter system $(W,\mathcal S)$, we associate a bi-linear form
$\langle-,-\rangle$ on $\mathbb R[\mathcal S]$ defined by $\langle
s,t\rangle=-\cos\left(\frac{\pi}{m_{st}}\right)$ when there is a
relation $(st^{m_{st}})$ and $\langle s,t\rangle=-1$ otherwise.  It is
well-known that this bi-linear form is positive definite if and only
if $W$ is finite, in which case the Coxeter system $(W,\mathcal S)$ is
\emph{spherical}.  Otherwise, $(W,\mathcal S)$ is non-spherical (or
\emph{aspherical}).  If the bi-linear form is positive semi-definite
and $(W,\mathcal S)$ is irreducible, then there is a short exact
sequence $\mathbb Z^n\rightarrow W\rightarrow W_0$, where
$n+1=|\mathcal S|$ and $W_0$ is a finite Coxeter group.  In this case,
the Coxeter system $(W,\mathcal S)$ is \emph{(irreducible)
affine}.

For any $J\subset\mathcal S$, the subgroup $W_J:=\langle J\rangle\subset W$ is a \emph{parabolic} subgroup.  Evidently, $W_J$ is again a Coxeter group and $(W_J,J)$ a Coxeter system.  The subset $J$ is \emph{spherical, irreducible, affine, etc.} if the Coxeter system $(W_J,J)$ has the same property.

\subsubsection{Right-angled Coxeter groups}  If each relation in the above presentation has the form $(st)^2$, then $W$ is a \emph{right-angled Coxeter group}.  In this case, let $\Gamma$ be the graph with vertex-set $\mathcal S$, and an edge joining $s,t\in\mathcal S$ if and only if $(st)^2=1$, i.e.\ if and only if the involutions $s,t$ commute.  Then $W$ decomposes as a graph product: the underlying graph is $\Gamma$, and the vertex groups are the subgroups $\langle s\rangle\cong\mathbb Z_2,\,s\in\mathcal S$.

Conversely, given a finite simplicial graph $\Gamma$ with vertex-set $\mathcal S$ and edge-set $\mathcal E$, there is a right-angled Coxeter group $$W_{\Gamma}:=\langle\mathcal S\mid s^2, (st)^2\,\co\,s,t\in\mathcal S, (s,t)\in\mathcal E\rangle.$$  For example, if $\Gamma$ is disconnected, then $W_{\Gamma}$ is isomorphic to the free product of the parabolic subgroups generated by the vertex-sets of the various components, while if $\Gamma$ decomposes as a nontrivial join, then $W_{\Gamma}$ is isomorphic to the product of the parabolic subgroups generated by the factors of the join.  For $J\subset\mathcal S$, the parabolic subgroup $W_J\leq W_{\Gamma}$ is isomorphic to the right-angled Coxeter group $W_{\Lambda}$, where $\Lambda$ is the subgraph of $\Gamma$ induced by $J$.

Finally, we remark that if $W_{\Gamma}$ is a right-angled Coxeter
group, then there exists a CAT(0) cube complex, $\widetilde X_{\Gamma}$
on which $W_\Gamma$ acts properly discontinuously and cocompactly.
This CAT(0) cube complex is the universal cover of the \emph{Davis
complex} $X_\Gamma$, which is obtained from the presentation complex of
$W_\Gamma$ by: collapsing bigons to edges, noting that each remaining
2-cell is a 2-cube, and then iteratively attaching a $k$-cubes
whenever its vertex set is contained in the $(k-1)$-skeleton, for
$k\geq 3$ (see~\cite{Davis:book} for details).
We will make use of the existence of such a CAT(0) cube complex in the proof of
Proposition~\ref{prop:general_join}.

\subsection{Background on divergence and thickness}\label{subsec:thick_background}

\newcommand{\divergence}{\mathrm{div}}
\newcommand{\Divergence}{\mathrm{Div}}

Given functions $f,g:\reals_{+}\rightarrow\reals_{+}$, we write $f\preccurlyeq g$ if for some $K\geq 1$ we have $f(s)\leq Kg(Ks+K)+Ks+K$ for all $s\in\reals_{+}$, and $f\asymp g$ if $f\preccurlyeq g$ and $g\preccurlyeq f$.

\begin{defn}[Divergence]\label{defn:divergence}
Let $(M,d)$ be a geodesic metric space, let
$\delta\in(0,1),\gamma\geq 0$, and let
$f\co\reals_{+}\rightarrow\reals_{+}$ be given by $f(r)=\delta
r-\gamma$.  Given $a,b,c\in M$ with $d(c,\{a,b\})=r>0$, let
$\divergence_f(a,b;c)=\inf\{|P|\}$, where $P$ varies over all paths
in $M$ joining $a$ to $b$ and avoiding the ball of radius $f(r)$
about $c$.  If no such path exists, $\divergence_f(a,b;c)=\infty$.
The \emph{divergence function}
$\Divergence^M_f\co\reals_{+}\rightarrow\reals_{+}$ of $M$ is then
defined by: $$\Divergence^M_f(s)=\sup\{\divergence_f(a,b;c)\co
d(a,b)\leq s\}.$$  Note that $M$ has finite divergence if and only if
$M$ has one end.
\end{defn}

Given a function $g\co\reals_{+}\rightarrow\reals_{+}$, we say that
$M$ has \emph{divergence of order at most $g$} if for some $f$ as
above, $\Divergence^M_f(s)\preccurlyeq g(s)$.  Much of the interest in
divergence comes from the fact that the divergence function of $M$ is
a quasi-isometry invariant in the sense that if $M_1$ and $M_2$ are
quasi-isometric geodesic metric spaces, and $\Divergence^{M_1}_f\asymp
g$, then $\Divergence^{M_2}_{f'}\asymp g$ for some $f'$.  In
particular, the divergence of a finitely-generated group is
well-defined up to the relation $\asymp$.
A group has linear divergence if and only if it does not have
cut-points in any asymptotic cone, such spaces are called
\emph{wide}, see \cite{Behrstock:asymptotic, DrutuMozesSapir}.

One family of metric spaces which are particularly amenable to
divergence computations are the \emph{thick} space, as
introduced in~\cite{BDM}. Thickness is a quasi-isometrically
invariant notion and this family of spaces is partitioned into
quasi-isometrically invariant subclasses by their \emph{order of
thickness}, which is a
non-negative integer.
In the present paper we work with a refinement of the
notion of thickness which is tuned for the study of
finitely generated groups:

\begin{defn}[Strongly algebraically thick~\cite{BehrstockDrutu:thick2}]\label{defn:strongly_algebraically_thick}
A finitely generated group $G$ is said to be
\emph{strongly algebraically thick
of order 0} if it is \emph{wide}.
For $n\geq 1$, the finitely generated group $G$ is
\emph{strongly algebraically thick of order at most $n$} if there
exists a finite collection $\mathcal H$ of subgroups such that:
\begin{enumerate}
 \item Each $H\in\mathcal H$ is strongly algebraically thick of order at most $n-1$.
 \item $\langle \cup_{H\in\mathcal H}H\rangle$ has finite index in $G$.
 \item There exists $C\geq 0$ such that for all $H,H'\in\mathcal H$,
 there is a sequence $H=H_1,\ldots,H_k=H'$ with each
 $H_i\in\mathcal{H}$ such that for all $i\leq k$, the intersection
 $H_i\cap H_{i+1}$ is infinite, and the $C$-neighborhood of $H_i\cap
 H_{i+1}$ (with respect to some fixed word metric on $G$) is path-connected.
 \item For all $H\in\mathcal H$, any two points in $H$ can be connected in the $C$-neighborhood of $H$ by a $(C,C)$-quasigeodesic.
\end{enumerate}
$G$ is \emph{strongly algebraically thick of order $n$} if $G$ is strongly algebraically thick of order at most $n$ but is not strongly algebraically thick of order at most $n-1$.
\end{defn}

As shown in~\cite{BehrstockDrutu:thick2}, if $G$ is strongly
algebraically thick of order $n$, then $G$, with any word metric, is a
(strongly) thick metric space.  In the present paper,
we are particularly interested in the following consequences of strong
algebraic thickness:

\begin{prop}[Upper bound on divergence; Corollary~4.17 of \cite{BehrstockDrutu:thick2}]\label{prop:divupbound}
Let $G$ be a finitely generated group that is strongly algebraically thick of order $n$.  Then the divergence function of $G$ is of order at most $s^{n+1}$.
\end{prop}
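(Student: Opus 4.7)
The plan is to argue by induction on the order of thickness $n$. For the base case $n=0$, the hypothesis is that $G$ is wide, which by definition means every asymptotic cone of $G$ has no cut points. By \cite{Behrstock:asymptotic, DrutuMozesSapir}, this characterization of wideness is equivalent to $G$ having linear divergence, giving $\text{Div}^G_f(s) \preccurlyeq s = s^{0+1}$ as desired.

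For the inductive step, suppose the bound holds for groups of order at most $n-1$, and let $G$ be strongly algebraically thick of order at most $n$ with witnessing collection $\mathcal{H}$ and constant $C$ as in Definition~\ref{defn:strongly_algebraically_thick}. By induction, each $H \in \mathcal{H}$, regarded as a finitely generated group with any word metric (or equivalently, via condition~(4), with the subspace metric from $G$ up to quasi-isometry), has divergence bounded by $s^n$. Given $a, b, c \in G$ with $d(a,b) \leq s$ and $r = d(c,\{a,b\})$, the goal is to construct a path from $a$ to $b$ of length $O(s^{n+1})$ avoiding the ball $B(c, f(r))$ for some suitable $f(r) = \delta r - \gamma$.

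First I would fix a geodesic $\gamma$ from $a$ to $b$ and, using condition~(2) (finite index of $\langle \cup \mathcal{H}\rangle$), cover a thickened neighborhood of $\gamma$ by $O(s)$ cosets of subgroups in $\mathcal{H}$. Using the network condition~(3), I would refine this to a chain of cosets $g_0 H_0, g_1 H_1, \dots, g_k H_k$ with $k = O(s)$, $a$ near $g_0 H_0$, $b$ near $g_k H_k$, and consecutive cosets having infinite intersection whose $C$-neighborhood is path-connected. Pick transition points $p_i$ in consecutive infinite intersections. Within each coset, use condition~(4) to obtain a $(C,C)$-quasigeodesic connecting $p_i$ to $p_{i+1}$; its length is bounded linearly in the distance traveled, which over the full chain sums to $O(s)$ when no obstruction is present.

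To avoid the ball around $c$, handle each coset in the chain separately. If the coset $g_i H_i$ is far from $c$ (distance at least $f(r)+C$), the quasigeodesic segment inside it already avoids $B(c,f(r))$. Otherwise, apply the inductive divergence bound inside $g_i H_i$: translating the problem into $H_i$, the points $p_i, p_{i+1}$ lie at distance $O(s)$ (since the chain was built from a geodesic of length $s$), so there is a path in $g_i H_i$ of length at most $O(s^n)$ avoiding an appropriately shrunken ball centered at the projection of $c$. Summing over the $O(s)$ cosets yields total length $O(s \cdot s^n) = O(s^{n+1})$. The main obstacle I expect is the bookkeeping of the constants: one must carefully choose $f$ so that the avoidance radius $\delta r - \gamma$ used in the ambient space matches up with a suitable avoidance radius in each $H_i$ after accounting for the distortion from condition~(4) and the $C$-neighborhood thickening in condition~(3), and so that cosets containing $c$ itself can be routed around via the network rather than entered. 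This is essentially the content of the technical work in~\cite{BehrstockDrutu:thick2}.
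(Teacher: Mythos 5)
The paper does not prove this Proposition: it is stated as a citation of Corollary~4.17 of~\cite{BehrstockDrutu:thick2}, so there is no in-text proof to compare against. Your sketch correctly identifies the overall strategy used in that reference: induction on the order of thickness, base case via the equivalence (stated in the paper and in~\cite{DrutuMozesSapir}) between wideness and linear divergence, and an inductive step that chains through $O(s)$ pieces of the network with a detour of length $O(s^n)$ inside each piece, giving $O(s^{n+1})$ total.

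Where I would press you is the chain-construction step. You propose to ``cover a thickened neighborhood of $\gamma$ by $O(s)$ cosets'' and then ``refine this to a chain of cosets $g_0 H_0, \dots, g_k H_k$ with consecutive cosets having infinite intersection whose $C$-neighborhood is path-connected, using condition~(3).'' But condition~(3) of Definition~\ref{defn:strongly_algebraically_thick} chains \emph{subgroups} in $\mathcal H$, all based at the identity; it says nothing directly about overlaps between cosets $g_iH_i$ and $g_{i+1}H_{i+1}$ for different translates $g_i\neq g_{i+1}$. Two cosets that both meet a geodesic need not have infinite intersection at all. The correct route (which is what the ``tight algebraic network'' machinery of~\cite{BehrstockDrutu:thick2} supplies) is to write a word for $g=a^{-1}b$ in generators of the subgroups in $\mathcal H$ of length $O(s)$ using condition~(2), pass through the resulting sequence of positions, and at each junction between two different subgroup types apply a \emph{translated} copy of the chain from condition~(3) to connect the two incident cosets through intermediate cosets with infinite, $C$-path-connected overlaps. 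This produces a chain whose length is still $O(s)$ (each junction contributes a bounded number of cosets), but it is not obtained by simply refining a geodesic cover. Your instinct that the remaining difficulty is ``bookkeeping of the constants'' is fair for the radius-matching in condition~(4), but the chain construction itself is a substantive step, not bookkeeping. Once that chain exists, the rest of your argument --- choosing transition points in the infinite overlaps outside $B(c,f(r))$, applying the inductive bound of order $s^n$ inside each piece, and summing over $O(s)$ pieces --- is sound.
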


\begin{prop}[Non-relative hyperbolicity; Corollary 7.9 of \cite{BDM}]
Let $G$ be strongly algebraically thick.  Then $G$ is not hyperbolic relative to any collection of proper subgroups.
\end{prop}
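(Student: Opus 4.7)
The plan is to derive a contradiction via the Druțu--Sapir characterization of asymptotic cones of relatively hyperbolic groups. The central fact to establish is that every strongly algebraically thick group $G$ is \emph{unconstricted}, meaning no asymptotic cone $\coneomega(G)$ has a global cut-point. Granting this, the rest follows in one step, since a relatively hyperbolic group's cones are tree-graded and rich in cut-points unless the structure is degenerate.

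First I would prove unconstrictedness by induction on the order $n$ of strong algebraic thickness. The base case $n=0$ is just the definition of wide. For $n\geq 1$, fix any asymptotic cone $\coneomega(G)$. Condition (4) of Definition~\ref{defn:strongly_algebraically_thick} ensures that each $H \in \mathcal H$ is quasi-isometrically embedded in $G$, so its ultralimit $\coneomega(H)$ embeds bi-Lipschitz in $\coneomega(G)$ and has no cut-point by the inductive hypothesis. Condition (3) guarantees that consecutive subgroups $H_i, H_{i+1}$ along any chain have infinite, coarsely connected intersection, yielding an unbounded subspace of $\coneomega(G)$ common to $\coneomega(H_i)$ and $\coneomega(H_{i+1})$. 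Condition (2) then ensures the pieces $\coneomega(H)$ cover a dense subset of $\coneomega(G)$. A standard argument gluing unconstricted pieces along unbounded subspaces (as developed in \cite{BDM} and \cite{BehrstockDrutu:thick2}) shows that $\coneomega(G)$ itself has no cut-point.

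With unconstrictedness established, suppose for contradiction that $G$ is hyperbolic relative to a collection $\mathcal P$ of proper subgroups. By \cite{DrutuSapir:TreeGraded}, every $\coneomega(G)$ is tree-graded with pieces the ultralimits of peripheral cosets. A connected tree-graded space with no cut-point consists of a single piece, so $\coneomega(G)$ equals the ultralimit of one peripheral coset $gPg^{-1}$. This forces $gPg^{-1}$ to be coarsely dense in $G$, hence quasi-isometric to $G$ and of finite index. Such a peripheral collapses the relatively hyperbolic structure to the trivial one in which $G$ is hyperbolic relative to itself, contradicting the standing convention that peripheral subgroups are proper in a substantive sense.

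The main obstacle is the inductive step establishing unconstrictedness: one must carefully verify that the unbounded overlap $\coneomega(H_i) \cap \coneomega(H_{i+1})$ genuinely prevents cut-points from arising at the interfaces between cone pieces, and that the union over $\mathcal H$ covers the whole cone. These verifications require the covering/gluing lemmas for cut-point-free spaces that are the technical heart of Section~7 of \cite{BDM}, and they leverage the qi-embedded requirement (4) together with the finite-index condition (2) essentially.
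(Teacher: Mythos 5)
Your argument hinges on the claim that every strongly algebraically thick group is unconstricted (no asymptotic cone has a cut-point), and that claim is false: it would collapse the entire thickness hierarchy to order~$0$. Wideness is \emph{equivalent} to the absence of cut-points in all asymptotic cones, and equivalently to linear divergence (as recalled in Section~\ref{subsec:thick_background}); but groups that are thick of order exactly $n\geq 1$ typically have superlinear divergence --- mapping class groups, and indeed the order-$1$ right-angled Coxeter groups appearing in this very paper, have at least quadratic divergence (cf.\ Proposition~\ref{prop:divupbound} and Theorem~\ref{thm:lin_quad}) --- hence they \emph{do} have cut-points in some asymptotic cone. The inductive ``gluing'' step is where this breaks: $\coneomega(G)$ is not covered by the finitely many sets $\coneomega(H)$, $H\in\mathcal H$, but by ultralimits of sequences of \emph{cosets} $g_iH$; two points at distance of order $d_i$ in $G$ are joined by chains of cosets whose length grows with $d_i$, so after rescaling the chain structure degenerates (overlaps can shrink to points, chains become transfinite), and no cut-point-free conclusion survives. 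Your conditions (2)--(4) control the group at bounded scale, not the cone. The same problem infects the phrase ``condition (2) ensures the pieces cover a dense subset of the cone''; it does not.

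The argument that actually proves the statement (this is how \cite[Corollary~7.9]{BDM} runs; the present paper only cites it) goes in the opposite direction: assume $G$ is hyperbolic relative to proper peripherals $\mathcal P$ and induct on the order of thickness, using \cite[Theorem~4.1]{BDM}, which says that any unconstricted subspace quasi-isometrically embedded in a relatively hyperbolic group lies in a uniformly bounded neighborhood of a \emph{single} peripheral coset. By induction each $H\in\mathcal H$ (thick of lower order, quasiconvexly embedded by condition (4)) lies near one peripheral coset; since distinct peripheral cosets have uniformly bounded coarse intersection while consecutive $H_i, H_{i+1}$ share an infinite (hence unbounded) subgroup, all members of $\mathcal H$ lie near the \emph{same} coset $gP$; condition (2) (finite index of $\langle\cup_{H\in\mathcal H}H\rangle$) then traps all of $G$ in a bounded neighborhood of $gP$, forcing $P$ to have finite index and contradicting properness/almost malnormality of peripherals. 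So the tree-graded machinery of \cite{DrutuSapir:TreeGraded} is used only through the order-$0$ case (wide $\Rightarrow$ confined to a peripheral), not through any claim that the whole cone of $G$ is cut-point free. I'd encourage you to rework the induction so that what propagates is ``$G$ is contained in a neighborhood of one peripheral coset,'' not ``$G$ is unconstricted.''
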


Note that the above establishes that the
divergence function of thick groups is qualitatively different from
that relatively
hyperbolic groups, as the latter class has divergence functions which are at
least exponential, c.f.,  \cite[Theorem~1.3]{Sisto:metricrelhyp}.

\section{Hyperbolicity relative to thick subgroups: the right-angled case}\label{sec:thick_rel_hyp}
In this section, $\Gamma$ will denote a
finite simplicial graph and $W_{\Gamma}$ will denote
the associated right-angled Coxeter group.
We will postpone proofs of most of the results of this section to the
appendix, where we will consider them in the context of arbitrary
Coxeter groups. We focus on the right-angled case here, both
for the benefit of readers specifically interested in
the right-angled case and because these groups are cocompactly
cubulated, which allow for more refined results, such as those in
Proposition~\ref{prop:general_join} and in Section~\ref{sec:random}.

We will adopt the following:
\begin{cvn} \emph{Graph} will always mean a finite
    simplicial graph (i.e., no multi-edges or monogons). Graphs will
    often be denoted by greek letters. When we say $\Lambda$ is a
    subgraph of $\Gamma$, or write $\Lambda\subset\Gamma$, we will
    mean the \emph{full induced subgraph}, i.e., a pair of
    vertices of $\Lambda$  spans an edge in $\Lambda$ if and only if
    they span one in $\Gamma$.
\end{cvn}

%
%
%

We begin by defining the class of graphs $\mathcal T$ that we discussed
briefly in the introduction.

\begin{defn}[New graphs from old]\label{defn:new_graphs} If $\Gamma$ is a
    graph, and $\Lambda\subset\Gamma$, then we say that the graph
    $\Gamma'$ is obtained by \emph{coning off $\Lambda$} if the
    graph $\Gamma'$ can be obtained from $\Gamma$ by adding one new
    vertex along with edges between that vertex and each vertex
    of $\Lambda$. Given two graphs $\Gamma_{1}$ and $\Gamma_{2}$ with
    isomorphic subgraphs $\Gamma$, we say the \emph{union of
    $\Gamma_{1}$ and $\Gamma_{2}$ along $\Gamma$} is the graph
    obtained by taking the disjoint union of the graphs $\Gamma_{1}$
    and $\Gamma_{2}$ and identifying the corresponding $\Gamma$
    subgraphs of $\Gamma_{i}$ by
    the given isomorphism taking one of the $\Gamma$ subgraphs to the
    other. Given two graphs $\Gamma_{1}$ and $\Gamma_{2}$ with
    isomorphic subgraphs $\Gamma$, we say that a graph $\Gamma'$ is a
    \emph{generalized union of $\Gamma_{1}$ and $\Gamma_{2}$ along
    $\Gamma$} if $\Gamma'$ can be obtained from the associated union
    by adding a collection of
    edges between vertices of $\Gamma_{1}\setminus\Gamma$ and
    vertices of
    $\Gamma_{1}\setminus\Gamma$.
\end{defn}

\begin{defn}[Thick graphs]\label{defn:thick_graph}
The set of \emph{thick graphs}, $\mathcal T$, is the smallest set
of graphs satisfying the following conditions:
\begin{enumerate}
 \item $K_{2,2}\in\mathcal T$.
 \item If $\Gamma\in\mathcal T$ and $\Lambda\subset\Gamma$ is any
 induced subgraph of diameter greater than one, then the graph obtained by
 \emph{coning off $\Lambda$} is in $\mathcal T$.
 \item Let $\Gamma_1,\Gamma_2\in\mathcal T$ with both $\Gamma_{i}$
 containing an isomorphic subgraph, $\Gamma$ which is not a clique,
 then any graph which is a generalized union of the $\Gamma_{i}$ along
 $\Gamma$ is in $\mathcal T$.
\end{enumerate}
\end{defn}

When $W$ is a right-angled Coxeter group there are no irreducible
affine Coxeter systems $(W,S)$ with $S$ of cardinality~$\geq 3$.  In
particular, it is straightforward to check that a right-angled Coxeter
groups is defined by a graph in $\mathcal T$ if and only if the group
is in the class of right-angled Coxeter groups $\coxeterthick$ which
is defined
at the beginning of Section~\ref{subsec:appthickcoxeter}.  The next
result is thus a consequence of Proposition \ref{prop:thickgeneral}.

\begin{thm}\label{thm:T_is_thick}
For each $\Gamma\in\mathcal T$, the right-angled Coxeter group
$W_{\Gamma}$ is strongly algebraically thick.
\end{thm}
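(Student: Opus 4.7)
The plan is to induct on the construction of $\Gamma \in \mathcal T$ through the three rules of Definition~\ref{defn:thick_graph}. At every inductive stage the witness will be a collection $\mathcal H$ consisting of two subgroups, each of them a (conjugate of a) standard parabolic subgroup of $W_{\Gamma'}$. Because standard parabolics of a right-angled Coxeter group embed as convex subcomplexes of the Davis complex, they are undistorted and quasi-convex, and the same holds for their conjugates; this takes care of the $(C,C)$-quasigeodesic condition of Definition~\ref{defn:strongly_algebraically_thick} uniformly.

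For the base case $\Gamma = K_{2,2}$, the group $W_\Gamma \cong D_\infty \times D_\infty$ is a direct product of two infinite Coxeter groups, so it has linear divergence and is strongly algebraically thick of order $0$. For the coning-off rule, suppose $\Gamma'$ is obtained from $\Gamma \in \mathcal T$ by attaching a new vertex $v$ adjacent to every vertex of a subgraph $\Lambda \subset \Gamma$ of diameter greater than one. I would take $\mathcal H = \{W_\Gamma,\, v W_\Gamma v\}$ inside $W_{\Gamma'}$. Both members are isomorphic to $W_\Gamma$ and so strongly algebraically thick by the inductive hypothesis. Their intersection contains $W_\Lambda$, which is infinite since $\Lambda$ is not a clique. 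To check that $\mathcal H$ generates a finite-index subgroup, I use the parity homomorphism $\varepsilon_v \colon W_{\Gamma'} \to \mathbb Z_2$ sending $v \mapsto 1$ and all other generators to $0$ (well-defined because every Coxeter relation contains $v$ with even total exponent); a standard normal-form argument shows that $\langle W_\Gamma, v W_\Gamma v\rangle = \ker \varepsilon_v$, which has index~$2$.

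For the generalized-union rule, suppose $\Gamma'$ is a generalized union of $\Gamma_1, \Gamma_2 \in \mathcal T$ along a common non-clique subgraph $\Gamma$. I would take $\mathcal H = \{W_{\Gamma_1}, W_{\Gamma_2}\}$. Since the extra edges permitted in a generalized union join only vertices of $\Gamma_1 \setminus \Gamma$ to vertices of $\Gamma_2 \setminus \Gamma$, the induced subgraphs of $\Gamma'$ on $V(\Gamma_i)$ coincide with $\Gamma_i$, so each $W_{\Gamma_i} \leq W_{\Gamma'}$ is isomorphic to the originally-given inductively thick group. The intersection of two standard parabolics is the parabolic on the intersection of their vertex sets, here $W_\Gamma$, which is infinite since $\Gamma$ is not a clique. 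And $V(\Gamma_1) \cup V(\Gamma_2) = V(\Gamma')$, so $\langle W_{\Gamma_1}, W_{\Gamma_2}\rangle = W_{\Gamma'}$.

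The technical point that requires the most care is verifying the path-connectedness of a $C$-neighborhood of each infinite intersection uniformly throughout the induction. In both cases, however, the intersection is a standard parabolic $W_J$ whose Coxeter generators $J$ already lie in the generating set of $W_{\Gamma'}$, so $W_J$ is itself a connected subgraph of the Cayley graph and a $0$-neighborhood suffices. The one remaining subtlety is that, in the coning-off step, $vW_\Gamma v$ must inherit undistortion and path-connectedness from $W_\Gamma$; this follows because conjugation in a finitely generated group is an isometry of the Cayley graph, so all relevant constants transfer without loss.
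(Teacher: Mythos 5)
Your proof is correct and follows essentially the same strategy as the paper's: induct on the three construction rules for $\mathcal T$, using $\{W_\Gamma,\,vW_\Gamma v\}$ in the coning step and $\{W_{\Gamma_1},W_{\Gamma_2}\}$ in the union step, and verifying the conditions of Definition~\ref{defn:strongly_algebraically_thick}. The only technical difference is in the finite-index claim for the coning step: the paper proves (Lemma~\ref{lem:index2}, valid for arbitrary Coxeter systems) that $\langle W_K\cup sW_Ks\rangle$ has index at most~$2$ by observing it is a reflection subgroup whose fundamental domain consists of at most two chambers, citing Deodhar; you instead use a parity homomorphism $\varepsilon_v$ together with a normal-form argument, which is a clean and more elementary route that exploits the right-angled structure (and in fact pins the index down to exactly~$2$), but would not generalize directly to the appendix's treatment of arbitrary Coxeter groups.
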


The main result of this section is the following which provides an
effective classification theorem with our explicit
description of~$\mathcal T$.

\begin{thm}\label{thm:thick_rel_hyp}
Let $\Gamma$ be a graph.  The right-angled Coxeter
group $W_{\Gamma}$ satisfies exactly one of the following:
\begin{itemize}
    \item it is strongly  algebraically thick and $\Gamma\in\mathcal
    T$; or,

    \item it is hyperbolic relative to a (possibly empty) minimal
    collection $\mathbb A$ of parabolic subgroups for which each
    $W_{\Lambda}\in\mathbb A$ is strongly algebraically thick and with
    each such $\Lambda\in\mathcal T$.
\end{itemize}
\end{thm}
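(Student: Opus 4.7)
The plan is to identify a natural candidate peripheral family from the combinatorics of $\mathcal T$, to show that distinct members intersect in cliques (so that their parabolic subgroups have finite intersection), and to deduce the relatively hyperbolic structure from a standard asymptotic cone criterion applied inside the Davis complex.

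Let $\mathbb A$ consist of the proper induced subgraphs $\Lambda\subsetneq\Gamma$ that are maximal with respect to the property $\Lambda\in\mathcal T$; by Theorem~\ref{thm:T_is_thick}, each $W_\Lambda$ is strongly algebraically thick. If $\Gamma\in\mathcal T$ the first bullet holds and we are done, so suppose otherwise. The first key observation is that any two distinct $\Lambda_1,\Lambda_2\in\mathbb A$ intersect in a clique of $\Gamma$. If they did not, then Definition~\ref{defn:thick_graph}(3) would place the induced subgraph of $\Gamma$ on $\Lambda_1\cup\Lambda_2$ into $\mathcal T$---any edges of $\Gamma$ between $\Lambda_1\setminus\Lambda_2$ and $\Lambda_2\setminus\Lambda_1$ are automatically realized by the induced-subgraph convention, so this union is a generalized union in the sense of Definition~\ref{defn:new_graphs}---contradicting the maximality of $\Lambda_1$ and $\Lambda_2$. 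Since $W_K$ is finite whenever $K$ is a clique, the conjugates $gW_{\Lambda_1}g\inv$ and $hW_{\Lambda_2}h\inv$ have finite intersection whenever they are distinct cosets, which is the almost malnormal behavior one expects from a peripheral family.

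The main technical step, and the hardest part of the argument, is to show that the family of cosets of the $W_\Lambda$ in the Davis complex $\widetilde X_\Gamma$ actually supports a relatively hyperbolic structure on $W_\Gamma$. I would approach this through the Dru\c{t}u--Sapir tree-graded characterization~\cite{DrutuSapir:TreeGraded}: show that every asymptotic cone of $W_\Gamma$ is tree-graded with pieces the ultralimits of sequences of cosets of the $W_\Lambda$. The key ingredient here is that, because $\Gamma\notin\mathcal T$, every non-clique separating set of $\Gamma$ must lie inside some $\Lambda\in\mathbb A$; this, combined with the CAT(0) cube complex structure of $\widetilde X_\Gamma$ and the convexity of the sub-Davis complexes fixed by the $W_\Lambda$, should allow one to verify that cut-points in the asymptotic cone separate distinct ultralimit pieces, and that any two pieces meet in at most a point (reflecting the finite-clique intersection above). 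An alternative route, should the intersection pattern of the $\Lambda$ happen to be acyclic, is to exhibit $W_\Gamma$ as a graph of groups with vertex groups $W_\Lambda$ and finite edge groups $W_K$, and then invoke a Bass--Serre-plus-combination theorem argument.

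Minimality is then automatic: since each peripheral is strongly algebraically thick, \cite[Corollary~4.7]{BDM} forces every relatively hyperbolic structure on $W_\Gamma$ to conjugate each $W_\Lambda$ into some peripheral, so the structure we have produced is minimal. The dichotomy (exactly one of the two bullets holds) follows from the fact that strongly algebraically thick groups are not hyperbolic relative to any family of proper subgroups~\cite[Corollary~7.9]{BDM}.
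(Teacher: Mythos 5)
Your setup is right and matches the paper's: you take the maximal $\mathcal T$-subgraphs as the candidate peripheral family, you prove the clique-intersection property by maximality plus the generalized-union closure, and you get minimality and exclusivity from \cite[Corollary~4.7]{BDM} and \cite[Corollary~7.9]{BDM}. But the central step --- actually proving that $W_\Gamma$ is hyperbolic relative to $\{W_\Lambda:\Lambda\in\mathbb A\}$ --- is not proved. The paper does not argue via asymptotic cones at all: it invokes the criterion of Caprace (\cite[Theorem~A$'$]{Caprace:relatively_hyperbolicErr}), which says that $W$ is hyperbolic relative to a family of standard parabolics $\{W_J:J\in\mathbb J\}$ if and only if (RH1) every irreducible affine subsystem of rank $\geq 3$ and every pair of commuting non-spherical subsets (in the right-angled case: every join of two non-clique subgraphs, in particular every induced $K_{2,2}$) is contained in some $J$; (RH2) distinct $J_1,J_2$ intersect in a spherical (clique) subset; and (RH3) for every non-clique $K\subseteq J$ one has $K^\perp\subseteq J$. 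Your clique-intersection observation is exactly (RH2), but you never address (RH1) or (RH3). Both do hold for your family and follow from maximality in the same spirit as your (RH2) argument --- (RH1) because squares and joins of non-cliques lie in $\mathcal T$, (RH3) because if $s\notin\Lambda$ commuted with a non-clique $K\subseteq\Lambda$ then $\Lambda\cup\{s\}$ would be obtained by coning off a diameter-$\geq 2$ subgraph, contradicting maximality --- but without them, and without a criterion that converts them into relative hyperbolicity, there is no proof.

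The route you sketch in place of such a criterion does not close this gap. Verifying directly that every asymptotic cone is tree-graded with pieces the ultralimits of the cosets is essentially the whole content of the theorem, and your "key ingredient" --- that every non-clique separating set of $\Gamma$ lies inside some $\Lambda\in\mathbb A$ --- is false in general: for $\Gamma$ a path on four vertices $a$--$b$--$c$--$d$ one has $\Gamma\notin\mathcal T$ and $\mathbb A=\emptyset$ (no induced squares), yet $\{a,c\}$ is a non-clique separating set contained in no member of $\mathbb A$. Likewise, the remark that the conjugates of distinct $W_{\Lambda_i}$ have finite intersection ("almost malnormal behavior") is not sufficient: weak relative hyperbolicity together with almost malnormality of the peripherals does not imply the bounded coset property, so it cannot substitute for (RH1)--(RH3) or for a tree-graded/BCP verification. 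The Bass--Serre alternative only applies when the intersection pattern of the $\Lambda$'s is a tree, which there is no reason to expect and which you do not establish. So the proposal identifies the correct peripheral family and one of the three needed combinatorial conditions, but the passage from that combinatorics to relative hyperbolicity --- which in the paper is exactly the role of Caprace's Theorem~A$'$ --- is missing.
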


If a group is hyperbolic relative to the empty collection of subgroups
then it is hyperbolic, hence, if $\mathbb A$ is empty then
$W_{\Gamma}$ is hyperbolic.

Theorem \ref{thm:thick_rel_hyp} can now be proven considering the
collection of all maximal subgraphs of $\Gamma$ that belong to
$\mathcal T$ and checking that conditions (RH1)--(RH3)
of~\cite[Theorem~A$'$]{Caprace:relatively_hyperbolicErr} hold.  We postpone the proof
of this to the appendix.

\begin{rmk}
An alternative way to prove Theorem~\ref{thm:thick_rel_hyp} is to define $\mathcal T$ to be the set of finite graphs whose corresponding right-angled Coxeter groups are thick.  It would then suffice to establish the following statements about induced subgraphs $J_1,J_2$ of $\Gamma$ belonging to $\mathcal T$:
\begin{enumerate}
 \item If $J_1\cap J_2$ is aspherical, then the subgraph induced by $J_1\cup J_2$ belongs to $\mathcal T$.
 \item If $v\in\Gamma-J_1$ and the link of $v$ in $J_1$ is nonempty and aspherical, then $J_1\cup\{v\}\in\mathcal T$.
 \item Joins of aspherical subgraphs belong to $\mathcal T$.
\end{enumerate}
\end{rmk}


Our explicit definition of $\mathcal T$ allows us to characterize
thick right-angled Coxeter groups, as we do now.

\begin{cor}\label{cor:thick_char}
$W_{\Gamma}$ is strongly algebraically thick if and only if $\Gamma\in\mathcal T$.
\end{cor}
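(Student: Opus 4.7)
The plan is to derive the corollary directly from Theorems \ref{thm:T_is_thick} and \ref{thm:thick_rel_hyp}. The forward direction, $\Gamma \in \mathcal{T} \Rightarrow W_\Gamma$ strongly algebraically thick, is exactly the content of Theorem \ref{thm:T_is_thick}, so nothing further needs to be done for that implication.

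For the reverse direction, suppose $W_\Gamma$ is strongly algebraically thick and invoke the dichotomy of Theorem \ref{thm:thick_rel_hyp}: $W_\Gamma$ satisfies \emph{exactly one} of the alternatives (i) $W_\Gamma$ is strongly algebraically thick and $\Gamma \in \mathcal{T}$, or (ii) $W_\Gamma$ is hyperbolic relative to a (possibly empty) collection $\mathbb{A}$ of proper parabolic subgroups each of which is itself strongly algebraically thick. Since we already know $W_\Gamma$ is thick, it suffices to rule out (ii). When $\mathbb{A}$ is nonempty, this is immediate from the non-relative-hyperbolicity property of thick groups (Corollary~7.9 of \cite{BDM}, recorded as a proposition in Section~\ref{subsec:thick_background}), which says that a strongly algebraically thick group cannot be hyperbolic relative to any collection of proper subgroups.

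The remaining subcase is $\mathbb{A} = \emptyset$, in which $W_\Gamma$ would be word-hyperbolic. I would exclude this by comparing divergence growth: by Proposition \ref{prop:divupbound} the divergence of $W_\Gamma$ is at most polynomial, while the exponential lower bound for relatively hyperbolic groups cited at the end of Section~\ref{subsec:thick_background} forces any infinite non-elementary hyperbolic group to have exponential divergence. Thus $W_\Gamma$ would have to be elementary hyperbolic, but a virtually cyclic group has asymptotic cones isometric to $\mathbb{R}$ and therefore fails to be wide, and neither it nor a finite group contains the infinite proper subgroups needed to satisfy the inductive thick-network condition at higher orders. With alternative (ii) ruled out in every subcase, the dichotomy forces (i), giving $\Gamma \in \mathcal{T}$.

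The main (very mild) obstacle is the elementary-hyperbolic subcase; the substantive content has already been absorbed into Theorems \ref{thm:T_is_thick} and \ref{thm:thick_rel_hyp}, so the corollary is essentially a direct reading of the dichotomy combined with the non-relative-hyperbolicity of thick groups.
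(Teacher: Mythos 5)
Your proposal is correct and takes essentially the same route as the paper: the forward direction is exactly Theorem~\ref{thm:T_is_thick}, and the reverse direction combines the dichotomy of Theorem~\ref{thm:thick_rel_hyp} with the fact that strongly algebraically thick groups are not relatively hyperbolic (\cite[Corollary~7.9]{BDM}). The only difference is that you explicitly dispose of the $\mathbb A=\emptyset$ (word-hyperbolic) subcase via the polynomial-versus-exponential divergence comparison, a point the paper's two-line proof leaves implicit.
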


\begin{proof}
If $W_{\Gamma}$ is strongly algebraically thick, then $\Gamma$ is not
relatively hyperbolic by \cite[Corollary~7.9]{BDM}.  Thus, by
Theorem~\ref{thm:thick_rel_hyp} we must have $W_{\Gamma}\in\mathcal
T$.  In the other direction: by Theorem~\ref{thm:T_is_thick}, if
$\Gamma\in\mathcal T$ then $W_{\Gamma}$ is strongly algebraically
thick.
\end{proof}

\begin{rmk}\label{rem:bounded_squares}
    From Corollary~\ref{cor:thick_char} we know that all right-angled
    Coxeter groups which are wide have corresponding graphs in $\TT$.
    As we shall see in Proposition~\ref{prop:general_join} these
    graphs all decompose as non-trivial joins, and thus in particular
    the number of squares in these graphs is linear in the
    number of vertices. In the case of right-angled Coxeter groups
    which are thick of order 1, it was proven in
    \cite{DaniThomas:divcox} that each vertex in the corresponding
    graph is contained in a square; hence in that case as well the
    number of squares is linear in the number of vertices.

    Accordingly, it is natural to expect that a graph in $\mathcal T$ contains ``many''
squares relative to the number of vertices it contains.  However, this
is not the case in general.  Indeed, for all sufficiently large
$N\in\mathbb N$ the set of graphs in $\mathcal T$ containing at most $N$ squares
is infinite.  We call a graph $\Gamma$ a \emph{filled pentagon} if
$\Gamma\in\mathcal T$ and contains vertices $v_1,\ldots,v_5$ such that
$d(v_i,v_{i+1})\geq 3$ for each $i$.  If $\Gamma$ is a filled
pentagon, then the graph obtained by joining $v_i$ and $v_{i+1}$ by a
path of length 2 is also a filled pentagon, while having the same number of squares
as $\Gamma$ and strictly more vertices.  Any element of $\mathcal T$
of diameter at least $6$ is a filled pentagon, since a path of length
6 contains a filled pentagon (as shown in
Figure~\ref{fig:path}). The claim now follows for some $N$, since
$\mathcal T$ contains graphs of arbitrarily
large diameter as we shall now show. Any graph $\Gamma\in\TT$ of
diameter at least three contains an induced path of length 2. Then
by taking the union of two copies of $\Gamma$ along this path is
still thick, by Theorem~\ref{thm:T_is_thick}, and has diameter larger
than $\Gamma$. Hence, existence of graphs in $\TT$ of arbitrarily
large diameter follows from induction and any example with diameter
at least three, e.g., as given in Figure \ref{fig:graphs_in_T}.

\end{rmk}
\begin{figure}[h]
\includegraphics[width=0.45\textwidth]{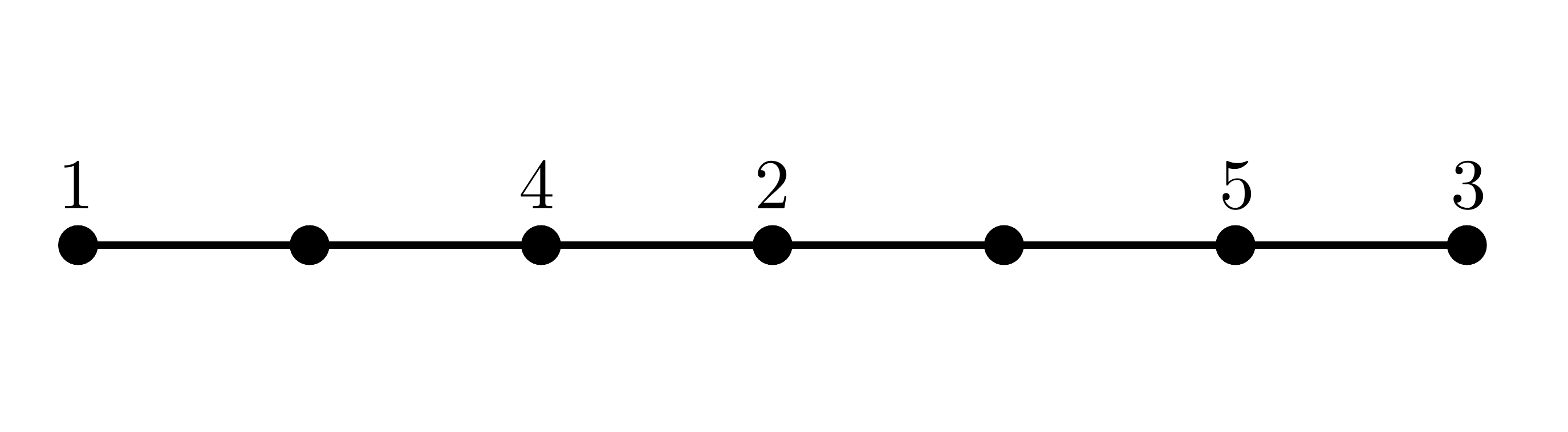}\\
\caption{A length 6 geodesic in $\Gamma$ shows that $\Gamma$ is a
filled pentagon.}\label{fig:path}
\end{figure}

\begin{rmk}[Theorem~\ref{thm:T_is_thick} does not hold for general Coxeter groups]\label{rmk:no_generalization}
Given a (not necessarily right-angled) Coxeter system $(W,\mathcal S)$, there is a naturally associated labelled graph $\Gamma$, the \emph{free Coxeter diagram}, with vertex-set $\mathcal S$ and an edge labelled $n\geq 2$ joining vertices $s,t$ that satisfy a relation $(st)^n=1$.  Note that since $m_{ss}=1$ for all $s\in\mathcal S$, this graph is simplicial.  Furthermore, if $(W,\mathcal S)$ is right-angled, then all labels are $2$ and $\Gamma$ is the graph considered above.

If the Coxeter group $W$ is not right-angled,
thickness of $W$ can not be characterized by a purely
graph-theoretic property of the
free Coxeter diagram.  Indeed, there exists a hyperbolic Coxeter group
$W$ whose free Coxeter diagram is a 4-cycle: consider the Coxeter
system determined by the presentation $$W=\langle s,t,u,v\mid
s^2,t^2,u^2,v^2, (st)^n,(su)^2,(uv)^2,(tv)^2\rangle,$$ with $n\geq 3$.
The labelled graph $\Gamma$ is a 4-cycle, with the edge joining $s,t$
labelled $n\geq 3$ and all other edges labelled 2.  However, the group
$W$ is a Fuchsian group, being generated by reflections in the sides
of a 4-gon in $\mathbb H^2$ with angles
$\frac{\pi}{2},\frac{\pi}{2},\frac{\pi}{2},\frac{\pi}{n}$.  Being
hyperbolic, $W$ cannot be thick.
\end{rmk}

Combining the upper bound on divergence of strongly thick spaces given
in~\cite[Corollary~4.17]{BehrstockDrutu:thick2},
the fact that relatively
hyperbolic groups have exponential divergence (see, e.g.,
\cite[Theorem~1.3]{Sisto:metricrelhyp}), and
Theorem~\ref{thm:thick_rel_hyp}, we obtain:

\begin{cor}\label{cor:divergence}
Let $\Gamma$ be a connected graph.  Then the
divergence function of $W_{\Gamma}$ is either 
exponential or
bounded above by a polynomial.
\end{cor}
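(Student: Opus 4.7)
The plan is to reduce the statement to a case analysis using the dichotomy provided by Theorem~\ref{thm:thick_rel_hyp}, and then to invoke known bounds on divergence in each case. First I would apply Theorem~\ref{thm:thick_rel_hyp} to split into two cases: either $W_\Gamma$ is strongly algebraically thick, or $W_\Gamma$ is hyperbolic relative to a (possibly empty) collection $\mathbb{A}$ of strongly algebraically thick parabolic subgroups.

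In the thick case, Proposition~\ref{prop:divupbound} gives that, if $W_\Gamma$ is strongly algebraically thick of order $n$, then the divergence function is bounded above by $s^{n+1}$, which is polynomial. This handles the first alternative of the corollary immediately.

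In the relatively hyperbolic case, I would split further according to whether $\mathbb{A}$ is empty. If $\mathbb{A} = \emptyset$, then $W_\Gamma$ is hyperbolic; assuming $\Gamma$ is connected and not reduced to a spherical configuration (otherwise divergence is trivially bounded), $W_\Gamma$ is non-elementary hyperbolic, so its divergence is exponential. If $\mathbb{A}$ is non-empty, then the quoted result of Sisto (\cite[Theorem~1.3]{Sisto:metricrelhyp}) gives a lower bound of exponential divergence. The upper bound of exponential divergence in this sub-case can be obtained by combining the fact that the peripherals have polynomial divergence (since they are strongly algebraically thick, again by Proposition~\ref{prop:divupbound}) with the standard fact that a relatively hyperbolic group whose peripherals have at most exponential divergence itself has at most exponential divergence. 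Putting these together gives $\Divergence \asymp e^s$, i.e., exponential.

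The main subtlety I anticipate is precisely justifying the exponential upper bound in the relatively hyperbolic case with non-empty peripheral collection, since the corollary's statement requires divergence to actually be exponential (not merely at least exponential). However, because the peripheral subgroups are thick and hence have polynomial, and in particular sub-exponential, divergence, the upper bound follows from standard arguments about geodesics in relatively hyperbolic groups escaping peripheral cosets; alternatively, one could simply weaken the statement to ``bounded below by exponential or bounded above by polynomial,'' which is the substantive content. I expect the entire proof to fit comfortably in a short paragraph once the dichotomy from Theorem~\ref{thm:thick_rel_hyp} and the divergence bounds from Proposition~\ref{prop:divupbound} and~\cite{Sisto:metricrelhyp} are cited.
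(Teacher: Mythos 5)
Your proof follows essentially the same approach as the paper, which derives the corollary directly by combining Theorem~\ref{thm:thick_rel_hyp}, Proposition~\ref{prop:divupbound}, and Sisto's exponential divergence result. Your concern about the exponential \emph{upper} bound in the non-empty peripheral case is a reasonable one; the paper leaves this point implicit and elsewhere only refers to ``at least exponential'' divergence for relatively hyperbolic groups, so the corollary is best read as asserting exactly the dichotomy you describe.
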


\subsection{Characterizing thickness of order $0$}

As it turns out, the class $\mathcal T_0$ of graphs $\Gamma$ for which
$W_\Gamma$ is wide admits a simple description as we shall see below.
The triangle-free case of this results was previously established
using different techniques in
\cite[Theorem~4.1]{DaniThomas:divcox}. We note that since there exist wide
Coxeter groups which are not products (for instance the 3-3-3 triangle group),
the following result does not
generalize beyond the right-angled case.

\begin{prop}\label{prop:general_join}
$\mathcal T_0$ is the set of graphs of the form $(\Gamma_1\star\Gamma_2)\star K$, where $\Gamma_1,\Gamma_2$ are aspherical and $K$ is a (possibly empty) clique.
\end{prop}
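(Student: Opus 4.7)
For the forward (sufficiency) direction: if $\Gamma = (\Gamma_1 \star \Gamma_2) \star K$ with each $\Gamma_i$ aspherical and $K$ a (possibly empty) clique, then $W_\Gamma \cong W_{\Gamma_1} \times W_{\Gamma_2} \times W_K$, and $W_K$ is finite since $K$ is a clique. Hence $W_\Gamma$ is quasi-isometric to the product $W_{\Gamma_1} \times W_{\Gamma_2}$ of two infinite groups. Any asymptotic cone of this product factors as the product of the asymptotic cones of its factors, each of which is an unbounded geodesic space, and such a product has no cut points (one bypasses any prospective cut point by varying a single coordinate). Thus $W_\Gamma$ is wide.

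For the reverse (necessity) direction, suppose $W_\Gamma \in \mathcal T_0$. Let $K \subseteq V(\Gamma)$ denote the set of cone vertices of $\Gamma$ (those adjacent to every other vertex); these span a clique, and $\Gamma = \Gamma' \star K$ where $\Gamma' := \Gamma \setminus K$ has no cone vertex. Since $W_K$ is finite, $W_{\Gamma'}$ is also wide. It suffices to show $\Gamma'$ is a nontrivial join; granting this, the maximal join decomposition $\Gamma' = A_1 \star \cdots \star A_m$ corresponds to the connected components of $\bar{\Gamma'}$. Since $\bar{\Gamma'}$ has no isolated vertex (else that vertex would be a cone vertex of $\Gamma'$), each $|V(A_i)| \geq 2$; and no $A_i$ can be a clique, for its vertices would then be cone vertices of $\Gamma'$. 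So each $A_i$ is aspherical, and grouping them into two pieces yields aspherical $\Gamma_1, \Gamma_2$ with $\Gamma' = \Gamma_1 \star \Gamma_2$, producing the required decomposition $\Gamma = (\Gamma_1 \star \Gamma_2) \star K$.

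The remaining task, and the main obstacle, is to establish the key geometric claim: if $\Gamma'$ has no cone vertex, $|V(\Gamma')| \geq 2$, and $\bar{\Gamma'}$ is connected, then $W_{\Gamma'}$ is not wide. This is where the right-angled hypothesis is essential, as the remark preceding the proposition indicates. The plan is to leverage the cocompact action of $W_{\Gamma'}$ on its Davis CAT(0) cube complex $\widetilde X_{\Gamma'}$: the connectedness of $\bar{\Gamma'}$ forbids any nontrivial product splitting of $\widetilde X_{\Gamma'}$, so one expects to extract a contracting (Morse) axis. Concretely, choose non-adjacent $s,t \in V(\Gamma')$ (possible since $\Gamma'$ is not a clique), and consider the axis in $\widetilde X_{\Gamma'}$ of the infinite-order element $st \in \langle s,t \rangle \cong D_\infty$. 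Using the connectedness of $\bar{\Gamma'}$ together with hyperplane combinatorics in $\widetilde X_{\Gamma'}$ (in particular, tracking which hyperplanes carrying letters of $\Gamma'$ can cross the hyperplanes dual to $s$ and $t$), show that this axis is Morse; a Morse axis yields a cut point in every asymptotic cone of $W_{\Gamma'}$, contradicting wideness. Verifying the Morse property from this purely combinatorial hypothesis is the hard step of the argument.
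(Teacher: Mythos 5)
Your forward direction and your combinatorial reduction are fine (the forward half matches the paper's, and stripping off the cone vertices $K$ to reduce to a graph $\Gamma'$ with no cone vertex is a legitimate, purely graph-theoretic substitute for the paper's treatment of inessential hyperplanes). The problem is that the heart of the converse --- the claim that a graph $\Gamma'$ with no cone vertex and connected complement cannot define a wide group --- is exactly the step you leave unproven, and the route you sketch for it does not work as stated. You propose to pick an arbitrary non-adjacent pair $s,t\in V(\Gamma')$ and show the axis of $st$ in the Davis complex is Morse. But $st$ need not be Morse: if there are non-adjacent vertices $a,b$ each adjacent to both $s$ and $t$ (i.e.\ $\{s,t,a,b\}$ spans an induced $K_{2,2}$ with $s,t$ a diagonal), then $\langle s,t,a,b\rangle\cong D_\infty\times D_\infty$ and the axis of $st$ lies in a flat, hence is not contracting. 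Such pairs exist in plenty of graphs with connected complement and no cone vertex (e.g.\ the thick, non-join graph of Figure~\ref{fig:graphs_in_T}), so the argument would require a careful choice of $s,t$, and there is in fact no guarantee that any rank-one element of the special form $st$ exists; producing a contracting isometry ``by hand'' from hyperplane combinatorics amounts to reproving a special case of rank rigidity, which is genuinely hard and is not done here.

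The paper closes this gap by quoting the rank-rigidity theorem of Caprace--Sageev~\cite{CapraceSageev:rank_rigidity} rather than constructing a Morse axis: since $W_{\Gamma}$ is wide, it admits no rank-one isometry of $\widetilde X_{\Gamma}$ (a contracting axis would give a Morse quasi-geodesic and hence cut points in asymptotic cones), so once the action is essential --- which is arranged by splitting off the inessential hyperplanes, whose labels form precisely your clique $K$ of cone vertices --- rank rigidity forces $\widetilde X_{\Gamma}$ to split as a product of two unbounded convex subcomplexes. Since the link of a vertex of $\widetilde X_{\Gamma}$ is exactly $\Gamma$, this product decomposition is read off as a join decomposition of $\Gamma'$ into aspherical subgraphs, which is the statement you needed. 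So to complete your proof you should replace the ``find a Morse element $st$'' step with an appeal to essentiality plus rank rigidity (or supply an independent proof of that dichotomy), after which your reduction to the no-cone-vertex case and your bookkeeping with the components of $\overline{\Gamma'}$ go through as written.
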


\begin{proof}  If $\Gamma$ decomposes as in the statement of the proposition, then
$W_{\Gamma}$ decomposes as the product of infinite subgroups
$W_{\Gamma_1}\times (W_{\Gamma_2}\times\mathbb Z_2^{|K|})$, whence
$W_{\Gamma}$ has linear divergence and is therefore wide, i.e.,
$\Gamma\in\mathcal T_0$.  Conversely, suppose that $W_{\Gamma}$ has
linear divergence, and let $\widetilde X_{\Gamma}$ be the universal
cover of the Davis complex (see~\cite{Davis:book}).  Then $\widetilde X_{\Gamma}$ is a CAT(0)
cube complex on which $W_{\Gamma}$ acts properly and cocompactly by
isometries.  Each hyperplane $H$ of $\widetilde X_{\Gamma}$ is
regarded as being labeled by a pair $(v,g)\in\Gamma^{(0)}\times
W_{\Gamma}$, where $gvg^{-1}$ acts as an inversion in the hyperplane
$H$.

Recall that $W_{\Gamma}$ acts \emph{essentially},  in the sense
of~\cite{CapraceSageev:rank_rigidity},  on $\widetilde
X_{\Gamma}$ if for
each hyperplane $H$ the two components of $\widetilde
X_{\Gamma}-H$ each contain points in some $W_{\Gamma}$-orbit which are arbitrarily
far from $H$. A hyperplane which does not have this property is
called \emph{inessential}.

Suppose that the action of $W_{\Gamma}$ on $\widetilde X_{\Gamma}$ is
\emph{essential}.
Then, since $W_{\Gamma}$ is wide, it contains no rank-one isometry of
$\widetilde X_{\Gamma}$ and, hence, the rank-rigidity theorem
of~\cite{CapraceSageev:rank_rigidity} implies that there exist
unbounded convex subcomplexes $\widetilde Y,\widetilde Y'$ such that
$\widetilde X_{\Gamma}=\widetilde Y\times\widetilde Y'$.  It follows
that the link of the vertex in $\widetilde X_{\Gamma}$ decomposes as
the join of aspherical subgraphs.  But this link is exactly $\Gamma$
and hence $\Gamma$ has the desired form.

Now we may assume  $W_{\Gamma}$ is not acting essentially on
$\widetilde X_{\Gamma}$. Thus, by definition, there exists an inessential
hyperplane $H_{(v,1)}$ and it is easy
to see that every generator must commute with $v$.  Indeed, if
$H_{(w,1)}$ and $H_{(v,1)}$ are disjoint hyperplanes, then $\langle
v,w\rangle\{H_{(w,1)}\}$ contains hyperplanes arbitrarily far from
$H_{(v,1)}$ in each of its halfspaces.  Let $K$ be the clique in
$\Gamma$ whose vertices label such inessential hyperplanes.  Then
$\Gamma=\Gamma'\star K$, where $\Gamma'$ is an aspherical set whose
vertices label essential hyperplanes of $\widetilde X_{\Gamma}$.
This provides the desired decomposition of $\Gamma'$ as the join of aspherical
subsets.
\end{proof}


\section{Random right-angled Coxeter groups}\label{sec:random}
We now consider the right-angled Coxeter group $W_{\Gamma}$ where
$\Gamma$ is a random graph in the following sense.  Let $p:\mathbb
N\rightarrow[0,1]$ be a function such that $p(n){n\choose 2}$ has a
limit in $\reals\cup\{\infty\}$ as $n\rightarrow\infty$.  A random
graph on $n$ vertices is formed by declaring each pair of vertices to
span an edge, independently of other pairs, with probability $p=p(n)$.
In other words, we define $G(n,p)$ to be the probability space consisting
of simplicial graphs with $n$ vertices, where, for each graph $\Gamma$
on $n$ vertices, $\prob(\Gamma)=p^E(1-p)^{{n\choose 2}-E}$, where $E$
is the number of edges in $\Gamma$.  This model of random graphs was
introduced by Gilbert in~\cite{Gilbert:Random_graphs},
and is both contemporaneous with and very similar to the
Erd\'{o}s-R\'{e}nyi model of random graphs first studied
in~\cite{ErdosRenyi1,ErdosRenyi2}. For a survey of more recent
results on random graphs see \cite{Chung:tourrandomgraphs}.

Since the assignment $\Gamma\mapsto W_{\Gamma}$ of a finite simplicial
graph to the corresponding right-angled Coxeter group is
bijective~\cite{Muhlherr}, it is sensible to define ``generic''
properties of right-angled Coxeter groups with reference to the above
model of random graphs.  More precisely, if $\pp$ is some
property of right-angled Coxeter groups for which there is a class
$\mathcal G$ of finite simplicial graphs such that $W_{\Gamma}$ has
the property $\pp$ if and only if $\Gamma\in\mathcal G$, then
we say that $W_{\Gamma}$ satisfies $\pp$ \emph{asymptotically
almost surely (a.a.s.)} if $\prob(\Gamma\in\mathcal G\cap
G(n,p))\rightarrow 1$ as $n\rightarrow\infty$. We emphasize that the
notion of asymptotically almost surely depends on the choice of
probability function, $p$, even though it is customary to not
explicitly mention this function in the notation.

The following question describes the author's best guess regarding
the behavior of
thickness and relative hyperbolicity for random right-angled Coxeter
groups. In this section we will provide both theorems and
computations that motivate this picture, but we lead with it to
contextualize the theorems that follow it.

\begin{question}\label{conj:thick_conj}
Let $\mathrm T_m$ be the set of graphs $\Gamma$ for which $W_\Gamma$ is thick of order $m\geq 0$, and denote by $\mathrm T_{\infty}$ the set of graphs for which $W_\Gamma$ is hyperbolic relative to proper subgroups.  Do there exist functions $f_m^-,f_m^+:\mathbb N\rightarrow[0,1],\,m\geq 0,$ such that for all $m\geq 0$, we have $f_m^-=O(f_m^+)$, and $f_{m}^+=O(f_{m-1}^-)$, and  $$\lim_{n\rightarrow\infty}\prob\left(\Gamma\in\mathrm T_m\mid\Gamma\in G(n,p(n)\right)=\begin{cases}
                             0\text{  if  }\frac{p(n)}{f_m^-(n)}\rightarrow 0\\
                             1\text{  if  }\frac{p(n)}{f_m^-(n)}\rightarrow\infty\text{  and  }\frac{p(n)}{f_{m}^+(n)}\rightarrow0

                                                                                                                                                                                                                                                                                                                                               \end{cases}?
$$ Similarly, does there exist $f_{\infty}$ such that $W_{\Gamma}$ is
asymptotically almost surely relatively hyperbolic when
$\Gamma\in G(n,p(n))$ and $p=o(f_{\infty})$?
\end{question}

The situation that would occur in the event of a positive answer to
Question~\ref{conj:thick_conj} is illustrated heuristically in
Figure~\ref{fig:spectrum}.  Given $p_1,p_2\co \naturals\rightarrow[0,1]$,
we place $p_1$ to the left of $p_2$ in the picture of $[0,1]$ if and
only if $p_1=o(p_2)$. Compare also Figure~\ref{fig:results3} which
summarizes the results of this section.

\begin{figure}[h]
      \centering
     \labellist\small\hair 2pt
     \pinlabel {$0$} at 39 11
    \pinlabel {$f_{\infty}$} at 66 11
    \pinlabel {$f_{m}^{-}$} at 128 11
    \pinlabel {$f_{m}^{+}$} at 142 11
    \pinlabel {$f_{m-1}^{-}$} at 170 11
    \pinlabel {$f_{0}^{-}$} at 253 11
    \pinlabel {$f_{0}^{+}$} at 267 11
    \pinlabel {1} at 318 11
    \pinlabel {Rel. hyp.} at 66 60
    \pinlabel {$T_{m}$} at 133 60
    \pinlabel {$T_{m-1}$} at 171 60
    \pinlabel {$T_{0}$} at 258 60
     \pinlabel {Finite or} at 306 70
     \pinlabel {virt. $\mathbb Z$} at 306 60
    \endlabellist
\includegraphics[scale=1]{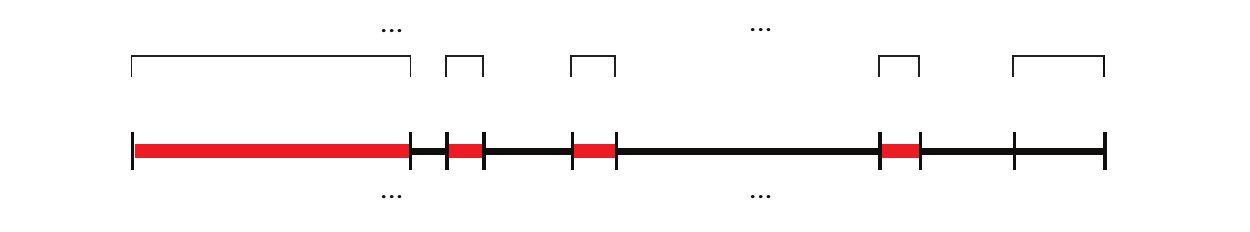}
\caption{Prevalence of thickness along the ``spectrum'' of densities
$p(n)$, if the answer to Question~\ref{conj:thick_conj} is positive;
bold intervals are where, conjecturally, $W_{\Gamma}$ is a.a.s.\ thick of
a specified order.}\label{fig:spectrum}
\end{figure}

In the interval where $W_{\Gamma}$ is a.a.s.\ relatively
hyperbolic, it is interesting to speculate whether the order of
thickness of the peripheral subgroups might be determined by $p(n)$,
especially in view of Theorem~\ref{thm:generic_relhyp}, which we will
see below.  In other
words, one could sensibly ask if there are functions $g_m^{\pm}$ such
that $W_{\Gamma}$ is a.a.s.\ hyperbolic relative to groups that are
thick of order $n$ for $p$ between $g_m^-$ and $g_m^+$, and if there
is a function $g_{\infty}$ such that $W_{\Gamma}$ is a.a.s.\ hyperbolic
--- i.e.\ hyperbolic relative to hyperbolic subgroups -- when
$p=o(g_{\infty})$.

The results in this section are summarized in
Figure~\ref{fig:results3}.  These results are consistent with a
positive answer to Question~\ref{conj:thick_conj}, but there are
significant ``gaps'' in the spectrum about which nothing is presently
known.


\begin{rmk}[Thickness and connectivity]\label{rem:thick_conn}
If $\Gamma$ is disconnected, then $W_{\Gamma}$ splits as a nontrivial
free product and is therefore not thick.  Hence the function
$f_{\infty}$ from Question~\ref{conj:thick_conj}, if it exists, must
satisfy $\log n/(n f_{\infty})\rightarrow0$, by
Theorem~\ref{thm:generic_relhyp} (as shown in
Figure~\ref{fig:results3}), since $\frac{\log^6n}{n}\rightarrow 0$.  In
other words, there are densities at which $\Gamma$ is a.a.s.\ connected
but $W_{\Gamma}$ is not a.a.s.\ thick.  However, the convergence to 0
of the proportion of random graphs at density $O(\frac{\log n}{n})$ is
quite slow.  This is illustrated in Table~\ref{tab:experiments}, which
shows data selected from the output of many computer
experiments\footnote{Source code available from the authors and at
arXiv.}; for
correctly-chosen $a>0$, even at $n=10000$ it is not yet even clear
that $W_\Gamma$ is not a.a.s.\ thick at density $\frac{a\log
n}{n}$.

\begin{table}[h]\label{tab:experiments}
\begin{minipage}[b]{0.45\textwidth}
\begin{tabular}{|l|c|r|}
\hline
$\mathbf a$ & $\mathbf n$ & \textbf{Prop. thick}\\\hline\hline
 1.95&2000&0.53\\
 1.95&2100&0.515\\
 1.95&4000&0\\\hline
 2&2000&0.8\\
 2&2500&0.46\\
2&3000&0.19\\
2&4000&0.025\\\hline
2.5&2500	&1\\
2.5&3000&0.53\\\hline
\end{tabular}
\end{minipage}
\hspace{-1.4cm}
\begin{minipage}[b]{0.4\textwidth}
\begin{tabular}{|l|c|r|}
\hline
$\mathbf a$ & $\mathbf n$ & \textbf{Prop. thick}\\\hline\hline
2.5&4000&0\\\hline
3&4000&0.5\\
3&5000&0\\\hline
4&4000&1\\
4&10000&1\\\hline
5&4000&1\\
5&10000&1\\\hline
10&4000&1\\
10&10000&1\\\hline
\end{tabular}
\end{minipage}
\caption{Experimental proportion of $\Gamma\in G\left(\mathbf n,\frac{\mathbf a\log\mathbf n}{\mathbf n}\right)$ that are thick.  For each $\mathbf a$, this proportion tends to 0 as $\mathbf n\rightarrow\infty$ by Theorem~\ref{thm:generic_relhyp} but, as illustrated, may do so quite slowly.}
\end{table}

\end{rmk}

\subsection{Behaviour at low densities}\label{subsec:basic_statements}

We collect a few facts about random right-angled Coxeter groups:

\begin{thm}\label{thm:free_prod}
$W_{\Gamma}$ asymptotically almost surely decomposes as a nontrivial free product, if and only if there exists $\epsilon>0$ such that $p(n)<\frac{(1-\epsilon)\log n}{n}$.  Hence, if $p(n)<\frac{(1-\epsilon)\log n}{n}$, then the divergence of $W(\Gamma)$ is a.a.s.\ infinite.

If there exists $\epsilon>0$ such that $p(n)>\frac{(1+\epsilon)\log n}{n}$ and $k\in\mathbb N$ such that $n^kp(n)^{k^2}\rightarrow 0$, then a.a.s.\ $\Gamma$ has no separating clique, and hence $W_{\Gamma}$ is a.a.s.\ one-ended and has finite divergence function.
\end{thm}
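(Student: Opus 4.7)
The plan is to treat the two halves of the theorem separately, since they concern disjoint regimes of $p(n)$.

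For the free-product half, I would first invoke the standard structural fact (noted in Section~\ref{subsec:coxeter_background}) that $W_\Gamma$ decomposes as a nontrivial free product if and only if $\Gamma$ is disconnected. The statement then reduces to the connectivity threshold of Erd\"{o}s--R\'{e}nyi: if $p(n) < (1-\epsilon)\log n/n$, then the expected number of isolated vertices $n(1-p)^{n-1} \sim ne^{-pn} \geq n^{\epsilon}$ diverges, and a standard second-moment (or Poisson approximation) argument gives a.a.s.\ an isolated vertex, hence disconnection; the converse direction tracks the same threshold, since whenever $\limsup p(n)n/\log n \geq 1$ the connectivity probability is bounded away from zero along a subsequence. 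Finally, a nontrivial free product has infinitely many ends and therefore infinite divergence by Definition~\ref{defn:divergence}.

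For the one-ended half, suppose $p > (1+\epsilon)\log n/n$ and $n^kp^{k^2} \to 0$. The key observation is that $n^k p^{k^2} = (np^k)^k$, so the latter hypothesis is equivalent to $np^k \to 0$, i.e.\ $p^k = o(1/n)$. I would show simultaneously that a.a.s.\ the clique number satisfies $\omega(\Gamma) \leq 2k$ and the vertex connectivity satisfies $\kappa(\Gamma) \geq 2k+1$; together these force no clique to be a vertex separator. For the first bound, the expected number of $(2k+1)$-cliques is at most $n^{2k+1} p^{k(2k+1)} = n^{2k+1}(p^k)^{2k+1} = o(1)$, so by Markov's inequality $\omega(\Gamma) \leq 2k$ a.a.s. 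For the second, the classical Erd\"{o}s--R\'{e}nyi--Bollob\'{a}s theorem on vertex connectivity asserts that $\kappa(G(n,p)) \geq r$ a.a.s.\ whenever $pn - \log n - (r-1)\log\log n \to \infty$; taking $r = 2k+1$, this follows from $pn - \log n \geq \epsilon \log n \gg (2k)\log\log n$.

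Combining the two bounds, a.a.s.\ every clique in $\Gamma$ is strictly smaller than any vertex cut, and hence no clique separates $\Gamma$. Further, $\Gamma$ is a.a.s.\ connected (Erd\"{o}s--R\'{e}nyi) and a.a.s.\ not itself a clique (since $\omega(\Gamma) \leq 2k < n$ for large $n$). The standard visual-splittings analysis of right-angled Coxeter groups (see~\cite{Davis:book}) then gives that $W_\Gamma$ splits over a finite subgroup iff $\Gamma$ is disconnected or has a separating clique; by Stallings' theorem, $W_\Gamma$ is a.a.s.\ one-ended, and finite divergence follows from Definition~\ref{defn:divergence}.

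The main technical point is choosing the clique-number bound that matches the available vertex-connectivity bound: the hypotheses $p > (1+\epsilon)\log n/n$ and $n^kp^{k^2}\to 0$ have been calibrated precisely so that $\omega(\Gamma) \leq 2k$ while $\kappa(\Gamma) \geq 2k+1$, leaving exactly no room for a clique to serve as a vertex cut. A weaker clique bound (say $m = k+1$, as one might first try by ruling out $(k+1)$-cliques) would be incompatible with the connectivity bound and the argument would not close.
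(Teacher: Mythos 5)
Your argument for the first half (free product $\Leftrightarrow$ disconnection $\Leftrightarrow$ isolated vertex, via the Erd\H{o}s--R\'{e}nyi connectivity threshold) matches the paper's, which simply cites the threshold result. The interesting comparison is in the second half.

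For the one-endedness statement, you take a genuinely different route from the paper. The paper's proof is a pure first-moment argument: it bounds the expected number of separating $j$-cliques by $\binom{n}{j}p^{\binom{j}{2}}\expect(K_{n-j})$, where $K_{m}$ is the indicator of disconnection of $G(m,p)$, and argues that the sum over $j\le k$ tends to $0$; the clique-size hypothesis is then used to truncate the sum at $j=k$. You instead split the problem into the two standard ingredients $\omega(\Gamma)\le 2k$ a.a.s.\ (first moment on $(2k+1)$-cliques, using the pleasant identity $n^kp^{k^2}=(np^k)^k$) and $\kappa(\Gamma)\ge 2k+1$ a.a.s.\ (Bollob\'{a}s's sharp vertex-connectivity threshold), then observe that no clique fits inside a vertex cut. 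Both ingredients are correctly calibrated, and your closing remark about why $2k+1$ rather than $k+1$ is needed is exactly the right observation.

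Your version is actually on firmer ground than the paper's. The paper's bound on the expected number of separating $j$-cliques is problematic near the connectivity threshold: for $p=(1+\epsilon)\log n/n$ with $\epsilon\in(0,1)$, one has $\expect(K_{n-j})\approx n^{-\epsilon}$, so already the $j=1$ term is $\binom{n}{1}\expect(K_{n-1})\approx n^{1-\epsilon}\to\infty$. The crude first moment over-counts because $\Gamma-\Lambda$ being disconnected is dominated by $\Gamma$ itself already being disconnected, a case we do not care about; conditioning on connectedness is essential, and the paper's displayed sum does not do that. Your appeal to the vertex-connectivity theorem sidesteps this entirely, since that theorem packages exactly the needed second-moment/conditional reasoning. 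The one place where you could be more explicit is in the final translation from graph theory to group theory: one should also note that $\Gamma$ is a.a.s.\ connected and not a complete graph (you do say this), and then cite the precise criterion for one-endedness of a right-angled Coxeter group rather than just gesturing at ``visual splittings,'' but this is a presentational point rather than a gap.
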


\begin{proof}
$W_{\Gamma}$ admits a nontrivial free product decomposition if and only if $\Gamma$ is disconnected, and $\log n/n$ is the threshold for $p(n)$ above which connectedness occurs a.a.s.\ and below which disconnectedness occurs a.a.s.\ (see~\cite{ErdosRenyi2}).

Let $K_n=K_n(\Gamma)$ equal 1 or 0 according to whether $\Gamma$ is disconnected.  For $0\leq j\leq n$, let $K_n^j(\Gamma)=\sum_{\Lambda}K_{n-j}(\Gamma-\Lambda)$, where $\Lambda$ varies over the size-$j$ subgraphs of $\Gamma$.  Then $\expect(K_n^j)={n\choose j}\expect(K_{n-j})p^{j\choose 2}$ is an upper bound for the expected number of separating $j$-simplices, and the expected number of separating simplices in $\Gamma$ is therefore bounded by $$\sum_{j=0}^{n-2}{n\choose j}\expect(K_{n-j})p^{j\choose 2}.$$
Now, for $p(n)>(1+\epsilon)\frac{\log(n)}{n},$ Theorem~1 of~\cite{ErdosRenyi1} implies that $\sum_{j\leq k}{n\choose j}\expect(K_{n-j})p^{j\choose 2}$ tends to 0 for any fixed $k$.  If $p(n)$ is sufficiently small to ensure that a.a.s.\ all cliques in $\Gamma$ have size $O(1)$, i.e.\ if there exists $k$ such that ${n\choose k}p^{k\choose 2}\rightarrow 0$, then the preceding sum bounds the limiting expected number of separating cliques of any size, and the proof is complete.
\end{proof}

\begin{thm}\label{thm:lin_quad}
If $p(n)=o\left(n^{\frac{-n}{2(n-2)}}\right)$, then $W_{\Gamma}$ is not thick of order 0, and hence has at least quadratic divergence, a.a.s.
\end{thm}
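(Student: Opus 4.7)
The plan is to reduce the statement to a graph-theoretic condition on $\Gamma$ via Proposition~\ref{prop:general_join} and then rule it out by a first-moment calculation, invoking a standard divergence dichotomy for the ``at least quadratic'' conclusion.

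By Proposition~\ref{prop:general_join}, $\Gamma\in\mathcal T_0$ holds exactly when $\Gamma$ admits a decomposition $\Gamma=A\star B$ with both $A$ and $B$ non-cliques; in particular $|A|,|B|\ge 2$ and all $a(n-a)$ edges between $A$ and $B$ are present in $\Gamma$, where $a=|A|$. Union-bounding over ordered choices of $A$ of each admissible size yields
\[
\prob\bigl(\Gamma\in\mathcal T_0\bigr)\ \le\ \sum_{a=2}^{n-2}\binom{n}{a}\,p(n)^{a(n-a)}.
\]
To control this sum, I would use $\binom{n}{a}\le n^a$ together with $p(n)^{a(n-a)}\le p(n)^{an/2}$ (valid for $a\le n/2$, since $n-a\ge n/2$ and $p(n)<1$). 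Combined with the symmetry $a\leftrightarrow n-a$, this yields
\[
\sum_{a=2}^{n-2}\binom{n}{a}\,p(n)^{a(n-a)}\ \le\ 2\sum_{a\ge 2}\bigl(n\,p(n)^{n/2}\bigr)^{a}.
\]

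The hypothesis $p(n)=o(n^{-n/(2(n-2))})$ gives $p(n)^{n/2}=o(n^{-n^2/(4(n-2))})$, and since $n^2/(4(n-2))>n/4$ for $n>2$, this forces $n\,p(n)^{n/2}=o(n^{1-n/4})\to 0$. Hence the geometric series on the right is $O\bigl((np(n)^{n/2})^2\bigr)\to 0$, and Markov's inequality concludes $\prob(\Gamma\in\mathcal T_0)\to 0$, so $W_\Gamma$ is a.a.s.\ not thick of order $0$.

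For the divergence claim, I would use the equivalence between being thick of order $0$ and being wide (i.e., having no cut-points in any asymptotic cone), together with the standard result that failure of wideness---equivalently, the presence of cut-points in some asymptotic cone---forces divergence to be at least quadratic; see~\cite{DrutuMozesSapir} and the discussion in Section~\ref{subsec:thick_background}. Applied to $W_\Gamma$ this yields the ``at least quadratic divergence'' conclusion a.a.s. The only nontrivial verification is the asymptotic estimate $np(n)^{n/2}\to 0$ under the hypothesis; everything else is a routine first-moment count together with a standard appeal to the linear/quadratic divergence dichotomy.
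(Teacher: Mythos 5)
Your first-moment argument for the graph-theoretic part is correct and is a genuinely different route from the paper's. By Proposition~\ref{prop:general_join} the reduction to ``$\Gamma$ is spanned by some $K_{a,n-a}$ with $2\leq a\leq n-2$'' is exactly the paper's first step, but where the paper then quotes the Erd\H{o}s--R\'{e}nyi edge-count threshold for spanning complete bipartite subgraphs~\cite{ErdosRenyi2} and transfers it to the $G(n,p)$ model via~\cite{Bollobas:book} (which forces a case split according to whether $p(n)n^2\to\infty$, the complementary case being handled by a.a.s.\ disconnectedness), you simply union-bound $\prob(\Gamma\in\mathcal T_0)\leq\sum_{a=2}^{n-2}\binom{n}{a}p^{a(n-a)}$ and estimate the sum directly; your asymptotic manipulations ($p^{a(n-a)}\leq p^{an/2}$ for $a\leq n/2$, symmetry in $a\leftrightarrow n-a$, and $np^{n/2}\to 0$ under the hypothesis) are all valid. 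This is more elementary and self-contained, needs no model transfer and no case split, and in fact proves the ``not thick of order $0$'' conclusion under hypotheses much weaker than the stated one.

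The gap is in the last step. It is \emph{not} a standard result, and it is not proved in~\cite{DrutuMozesSapir}, that failure of wideness forces divergence to be at least quadratic for an arbitrary finitely generated group. What is standard (and what this paper records in Section~\ref{subsec:thick_background}) is only the equivalence: linear divergence $\Leftrightarrow$ no cut-points in any asymptotic cone, i.e.\ wide. So from ``$W_\Gamma$ is a.a.s.\ not thick of order $0$'' you may conclude a.a.s.\ \emph{superlinear} divergence, but the jump from superlinear to at-least-quadratic is precisely the delicate point; no general linear/quadratic gap theorem is available to cite, and the question of such a gap for general groups is well known to be subtle. The paper closes this gap using the special geometry of right-angled Coxeter groups: $W_\Gamma$ acts properly and cocompactly on the Davis complex, a CAT(0) cube complex, so once the group is known not to be wide (hence, by the analysis in Proposition~\ref{prop:general_join}, not virtually a product), rank rigidity~\cite[Corollary~B]{CapraceSageev:rank_rigidity} produces a periodic rank-one geodesic, and then~\cite[Proposition~3.3]{KapovichLeeb:3manifolds} gives the quadratic lower bound on divergence. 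Your proof needs this (or an equivalent Morse-element argument) in place of the appeal to a dichotomy in~\cite{DrutuMozesSapir}.
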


\begin{proof}
$W_{\Gamma}$ is thick of order 0 only if $\Gamma$ admits a nontrivial
join decomposition in which each factor has at least two vertices, by
Proposition~\ref{prop:general_join}.  Hence $W_{\Gamma}$ is thick of
order 0 only if there exists $a\in\mathbb N$ with $2\leq a\leq n-2$
such that $K_{a,n-a}$ spans $\Gamma$.  In~\cite{ErdosRenyi2}, it is
shown that, for each such $a$, there is no such subgraph,
asymptotically almost surely, if the number $N$ of edges in $\Gamma$
satisfies $$N=o\left(n^{2-\frac{n}{a(n-a)}}\right),$$ where $\Gamma$
is a random graph in the slightly different model considered in that
paper.

The same conclusion applies in the present situation provided
the expected number $\expect(N)=p(n){n\choose 2}$ of edges tends with
$n$ to infinity, by~\cite[Theorem~2.2]{Bollobas:book}).  It follows
that if $p(n)=o\left(n^{-\frac{n}{2(n-2)}}\right)$ and
$p(n)n^2\rightarrow\infty$, then
$N=o\left(n^{2-\frac{n}{2(n-2)}}\right)$ and hence $\Gamma$ does not
contain $K_{a,n-a}$, with $2\leq a\leq n-2$, a.a.s.\ In this case, we
thus have $W_{\Gamma}$ is not thick of order $0$, and hence has
superlinear divergence. By
\cite[Corollary~B]{CapraceSageev:rank_rigidity}, since $W_{\Gamma}$
acts co-compactly on its Davis complex it contains a periodic
rank-one geodesic and thus by \cite[Proposition~3.3]{KapovichLeeb:3manifolds} the
divergence of $W_{\Gamma}$ is at least quadratic.

If $\expect(N)$
does not tend with $n$ to infinity, then $p(n)n^2$ is bounded, whence
$p(n)$ grows slowly enough to ensure that $\Gamma$ is a.a.s.\
disconnected, and hence $W_{\Gamma}$ has infinite divergence.
\end{proof}

\begin{thm}\label{thm:generic_relhyp}
If $p(n)n\rightarrow\infty$ and $p(n)^6n^5\rightarrow0$, then the
following holds asymptotically almost surely: $\Gamma$ has a component
$\Gamma'$ such that $W_{\Gamma'}$ is hyperbolic relative to
a nonempty collection of proper subgroups, each isomorphic to
$D_{\infty}\times D_{\infty}$.  Hence $W_{\Gamma}$ is a.a.s.
hyperbolic relative to a nonempty collection of proper
$D_{\infty}\times D_{\infty}$ subgroups, at least one of which is not
a proper free factor of $W_{\Gamma}$.
\end{thm}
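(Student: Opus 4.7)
The strategy is to combine first- and second-moment estimates for random graphs with a structural lemma about $\mathcal T$, and then invoke Theorem~\ref{thm:thick_rel_hyp}. The key point is to show that under the hypotheses, the maximal induced subgraphs of $\Gamma$ belonging to $\mathcal T$ are a.a.s.\ precisely the induced $4$-cycles.

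I first establish three probabilistic claims. (i) Since $p(n)n\to\infty$, the expected number $\mathbb{E}[X]$ of induced $4$-cycles in $\Gamma$ is $\Theta(n^4p^4)\to\infty$; stratifying pairs of distinct induced $4$-cycles by the number $k\in\{0,1,2,3\}$ of shared vertices, each stratum contributes $o(\mathbb{E}[X]^2)$ when $np\to\infty$, so Chebyshev gives $X>0$ a.a.s. (ii) The hypothesis $p^6n^5\to 0$ forces $p\to 0$; for each of the finitely many isomorphism classes $H$ on $5$ vertices with $e(H)\geq 6$, the expected number of induced $H$-copies in $\Gamma$ is $O(n^5p^{e(H)})=O(n^5p^6)=o(1)$, so a union bound and Markov's inequality yield a.a.s.\ no induced $5$-vertex subgraph of $\Gamma$ has $\geq 6$ edges. (iii) The expected number of induced $4$-cycles that form an entire connected component of $\Gamma$ is $O((np)^4 e^{-4np})=o(1)$, so a.a.s.\ every induced $4$-cycle of $\Gamma$ lies in a component of size $\geq 5$.

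Next, I prove the structural lemma that every $\Lambda\in\mathcal T$ with $|V(\Lambda)|\geq 5$ contains an induced $5$-vertex subgraph with $\geq 6$ edges. The proof is by structural induction on the construction of $\mathcal T$ given in Definition~\ref{defn:thick_graph}, running a parallel induction to verify that the only $4$-vertex member of $\mathcal T$ is $K_{2,2}$. For the cone-off operation producing $\Lambda'$ from $\Lambda\in\mathcal T$: if $|\Lambda|\geq 5$ the claim descends, while if $\Lambda=K_{2,2}$, coning off any induced subgraph of diameter $>1$ --- namely $\overline{K_2}$, $P_3$, or $K_{2,2}$ itself --- yields $K_{2,3}$ ($6$ edges), a $7$-edge graph, or the wheel $W_4$ ($8$ edges) on $5$ vertices. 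For the generalized union along $\Gamma$, the extra edges allowed by Definition~\ref{defn:new_graphs} lie only between $\Gamma_1\setminus\Gamma$ and $\Gamma_2\setminus\Gamma$, so any induced subgraph confined to a single $\Gamma_i$ is preserved in the union with its edge count; hence if some $|\Gamma_i|\geq 5$ we invoke induction. Otherwise both $\Gamma_i=K_{2,2}$ and the overlap is either $\overline{K_2}$ (result on $6$ vertices) or $P_3$ (result on $5$), where a direct check exhibits an induced $K_{2,3}$.

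Combining (ii) with the lemma, a.a.s.\ no induced subgraph of $\Gamma$ in $\mathcal T$ has more than four vertices, so the maximal induced thick subgraphs are precisely the induced $4$-cycles. By (i) and (iii), a.a.s.\ some component $\Gamma'$ of $\Gamma$ has $|\Gamma'|\geq 5$ and contains an induced $4$-cycle; thus $\Gamma'\notin\mathcal T$, and Theorem~\ref{thm:thick_rel_hyp} makes $W_{\Gamma'}$ hyperbolic relative to the nonempty collection of parabolic subgroups associated to the induced $4$-cycles of $\Gamma'$, each isomorphic to $W_{C_4}\cong D_\infty\times D_\infty$. The statement for $W_\Gamma$ then follows by assembling these structures over the free product decomposition of $W_\Gamma$ indexed by the components of $\Gamma$. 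For the free-factor assertion, $\Gamma'$ being connected with more than four vertices implies $W_{\Gamma'}$ does not split as a nontrivial free product and is therefore freely indecomposable; by the Kurosh subgroup theorem and uniqueness of Grushko's decomposition, the freely indecomposable parabolic $W_C\cong D_\infty\times D_\infty$ sitting inside $W_{\Gamma'}$ cannot be a proper free factor of $W_\Gamma$. The main obstacle is the structural lemma; the probabilistic estimates are routine.
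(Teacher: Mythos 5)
Your argument is correct, but it reaches the stated conclusion by a route that differs from the paper's in two places, and it proves slightly less than the paper's proof actually delivers. Your structural lemma (every $\Lambda\in\mathcal T$ on at least five vertices contains an induced five-vertex subgraph with at least six edges, proved by induction on the construction of $\mathcal T$) plays the role of the paper's Lemma~\ref{lem:5_vertices}, which instead shows directly that any $\Lambda\in\mathcal T$ other than $K_{2,2}$ contains a $K_{2,3}$; your lemma is in fact an immediate corollary of that one, and your inductive proof goes through (just note that in the union step the overlap could be all of $K_{2,2}$, which only produces a four-vertex graph and so is absorbed by your parallel claim that $K_{2,2}$ is the unique four-vertex member of $\mathcal T$). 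The more substantive divergence is in locating the $4$-cycle: you rule out components that are exactly $4$-cycles by a first-moment bound of order $(np)^4e^{-4np}$, whereas the paper works with the giant component and Lemma~\ref{lem:unicyclics} (all non-giant components are trees), concluding that \emph{all} induced $4$-cycles lie in the giant component. Your route is essentially the ``slightly easier'' argument the authors describe in the remark following their proof: it suffices for the statement of Theorem~\ref{thm:generic_relhyp} as quoted (``$\Gamma$ has a component $\Gamma'$\dots''), but it does not yield the sharper form stated in Theorem~\ref{thmi:hyp_rel_free_abel}, where $\Gamma'$ is the giant component, since you have not excluded the possibility that every $4$-cycle sits in a small unicyclic component that is not itself a $4$-cycle. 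Otherwise your pieces match the paper's: the second-moment computation for induced $4$-cycles replaces the citation of Charney--Farber plus Paley--Zygmund, your five-vertex density bound replaces the $K_{2,3}$ count, the appeal to Theorem~\ref{thm:thick_rel_hyp} with maximal thick subgraphs equal to the induced $4$-cycles is the same, and your Grushko/Kurosh argument makes explicit the ``not a proper free factor'' claim that the paper leaves implicit.
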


\begin{rmk}
Of greatest interest are densities $p(n)$ growing faster than
$\frac{\log n}{n}$ but slower than $n^{-1/6}$.
At such densities, Theorem~\ref{thm:free_prod} and
Theorem~\ref{thm:generic_relhyp}
together ensure that $W_{\Gamma}$ is asymptotically
almost surely one-ended and hyperbolic relative to $D_{\infty}\times
D_{\infty}$ subgroups.
\end{rmk}

\begin{proof}[Proof of Theorem~\ref{thm:generic_relhyp}]
Since $pn\rightarrow\infty$,~\cite{ErdosRenyi3} together
with~\cite[Theorem~2.2.(ii)]{Bollobas:book} implies that a.a.s.
$\Gamma$ has a \emph{giant component} $\Gamma'$ containing a positive
proportion $\alpha\in(0,1)$ of the vertices, and every other component
$\Gamma_i$ has no more than $O(\log n)$ vertices.  It suffices to show
that, a.a.s, $\Gamma'$ contains $K_{2,2}$ as an induced proper
subgraph and $\Gamma$ does not contain $K_{2,3}$.  Indeed, the second
assertion, together with Lemma~\ref{lem:5_vertices} implies that every
element of $\mathcal T$ arising as an induced subgraph of $\Gamma'$ is
isomorphic to $K_{2,2}$.  The first assertion, together with
Theorem~\ref{thm:thick_rel_hyp}, will then complete the proof.

\textbf{$K_{2,3}$ is a.a.s.\ absent:}
Since $p(n)^6n^5\rightarrow0$ as
$n\rightarrow\infty$ by hypothesis,
Corollary~5 of~\cite{ErdosRenyi2}
implies that, a.a.s., $\Gamma$, and therefore $\Gamma'$, does not
contain $K_{2,3}$.

\textbf{An induced $K_{2,2}$ a.a.s.\ appears in $\Gamma'$:}  Let $v_1,\ldots,v_4$ be distinct vertices in the random size-$n$ graph $\Gamma$, and let the random variable $I(v_1,\ldots,v_4)$ take the value 1 or 0 according to whether or not $\{v_1,\ldots,v_4\}$ is the vertex set of an induced $K_{2,2}$ in $\Gamma$.  The random variable $S_n=\sum_{v_1,v_2,v_3,v_4}I(v_1,\ldots,v_4)$ counts each induced $K_{2,2}$ in $\Gamma$ eight times, reflecting the eight automorphisms of $K_{2,2}$.  Since there are ${n\choose 4}$ such quadruples, and each forms an induced copy of $K_{2,2}$ exactly when there is some permutation $\sigma:\{1,2,3,4\}\rightarrow\{1,2,3,4\}$ such that $v_{\sigma(i)}$ is adjacent to $v_{\sigma(i)+1}$ for each $i$, and the remaining two possible edges are absent, we have $\expect(S_4)=24{n\choose 4}p^4(1-p)^2$.

Let $N\in\mathbb N$ and let $\epsilon\in(0,1)$.  The preceding discussion shows that since $p(n)n\rightarrow\infty$, there exists $N_1\in\mathbb N$ such that $\expect(S_n)\geq\frac{N}{\epsilon}$ for all $n\geq N_1$.  The proof of Theorem~4.1 of~\cite{CharneyFarber} shows that, since $pn\rightarrow\infty$ and $(1-p)n^2\rightarrow\infty$, $$\frac{\expect(S_n)^2}{\expect(S_n^2)}\rightarrow 1,$$
so that there exists $N_2\in\mathbb N$ such that $$\frac{\expect(S_n)^2}{\expect(S_n^2)}>1-\epsilon$$ for $n\geq N_2$.  The Paley-Zygmund inequality implies that for all $n\geq\max\{N_1,N_2\}$,
\begin{eqnarray*}
 \prob(S_n\geq N)&\geq&\prob(S_n\geq\epsilon\expect(S_n))\\
 &\geq&(1-\epsilon)^2\frac{\expect(S_n)^2}{\expect(S_n^2)}>(1-\epsilon)^3.
\end{eqnarray*}

This implies that for each $N\in\mathbb N$, we have $\lim_n\prob(S_n<N)=0$.  Lemma~\ref{lem:unicyclics} below states that a.a.s., every component of $\Gamma$ is either a tree or equal to $\Gamma'$.  Hence $\prob(S'_n<16)\rightarrow0$ as $n\rightarrow\infty$, where $S'_n$ counts the squares (ignoring symmetry) in $\Gamma'$.  Thus $\Gamma'$ a.a.s.\ contains at least two induced copies of $K_{2,2}$.
\end{proof}

\begin{rmk}
The fact that $W_{\Gamma}$ is hyperbolic relative to $D_{\infty}\times D_{\infty}$ subgroups that are not free factors can be seen slightly more easily, by first producing induced $K_{2,2}$ subgraphs in $\Gamma$ and verifying that $\Gamma$ a.a.s.\ does not contain $K_{2,3}$, as in the proof of Theorem~\ref{thm:generic_relhyp}, and then observing that by Theorem~5.16 of~\cite{Bollobas:book}, $\Gamma$ a.a.s.\ has no component which is a 4-cycle.  Theorem~\ref{thm:generic_relhyp} is of course a stronger conclusion, since it rules out the possibility that $W_{\Gamma'}$ is hyperbolic and every 4-cycle lies in a unicyclic component that is not a 4-cycle.
\end{rmk}

\begin{lem}\label{lem:unicyclics}
Let $\Gamma\in G(n,p(n))$, with $p(n)$ satisfying the hypotheses of Theorem~\ref{thm:generic_relhyp}.  Asymptotically almost surely, each component of $\Gamma$ is either the giant component or is a tree.
\end{lem}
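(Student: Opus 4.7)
The plan is a first-moment computation. Let $Y_k$ denote the number of connected components of $\Gamma$ of size $k$ that fail to be trees (equivalently, connected induced subgraphs on $k$ vertices containing at least one cycle and isolated from the rest of $\Gamma$). The goal is to show that $\sum_k \expect(Y_k) = o(1)$ as $n\to\infty$, for $k$ ranging over the sizes a non-giant component can achieve; Markov's inequality then finishes the proof. To control the range of $k$, I will invoke the same giant-component fact already used in the proof of Theorem~\ref{thm:generic_relhyp}: since $p(n)n\to\infty$, asymptotically almost surely $\Gamma$ has a unique component of size $\geq n/2$. Consequently it suffices to estimate $\expect(Y_k)$ for $3\leq k\leq n/2$, as components of size $\leq 2$ are automatically trees.

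Fix a $k$-subset $S\subseteq V(\Gamma)$. The events ``the induced graph on $S$ is connected and has at least one edge beyond a spanning tree'' and ``no edge of $\Gamma$ leaves $S$'' depend on disjoint sets of potential edges, hence are independent. By Cayley's formula there are $k^{k-2}$ spanning trees on $[k]$, and at most $k^2/2$ potential non-tree edges, so a union bound gives
\[
\prob[S\text{ is connected with at least }k\text{ edges}]\ \leq\ k^{k-2}\cdot\tfrac{k^2}{2}\cdot p^k\ =\ \tfrac{1}{2}k^k p^k,
\]
while the isolation probability is $(1-p)^{k(n-k)}$. Summing over the $\binom{n}{k}$ choices of $S$, applying $\binom{n}{k}\leq(ne/k)^k$ and $1-p\leq e^{-p}$, and writing $\lambda=p(n)n$, one arrives at
\[
\expect(Y_k)\ \leq\ \tfrac{1}{2}\bigl(\lambda\, e^{1-\lambda(1-k/n)}\bigr)^k\ \leq\ \tfrac{1}{2}\bigl(\lambda\, e^{1-\lambda/2}\bigr)^k
\]
for $k\leq n/2$.

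Since $\lambda\to\infty$ by hypothesis, $\lambda e^{1-\lambda/2}\to 0$, so $\sum_{k\geq 3}\tfrac{1}{2}(\lambda e^{1-\lambda/2})^k$ is a geometric series whose sum is dominated by its leading term and tends to $0$. Therefore $\sum_{k=3}^{\lfloor n/2\rfloor}\expect(Y_k)\to 0$, and by Markov's inequality $\Gamma$ asymptotically almost surely contains no non-tree component of size $\leq n/2$; equivalently, every non-giant component is a tree. The calculation itself is routine; the only point needing more than mechanical work is verifying that the ``non-giant'' hypothesis really restricts $k$ to the summable range, and this is exactly where the supercritical-giant input is used.
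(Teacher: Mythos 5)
Your argument is correct, but it takes a different route from the paper: the paper's proof is a one-line citation of standard random graph theory (Theorem~6.10(iii) and Theorem~2.2(ii) of Bollob\'as's book), which together say that for $p(n)n\to\infty$ all components other than the giant are a.a.s.\ trees, whereas you reprove that fact from scratch by a first-moment count. Your computation is the standard one and checks out: the union bound over Cayley's $k^{k-2}$ spanning trees plus one extra edge, independence of the ``internal'' and ``isolation'' events, the cancellation of $k^k$ against $\binom{n}{k}\leq (ne/k)^k$ giving $\expect(Y_k)\leq\tfrac12(\lambda e^{1-\lambda(1-k/n)})^k$ with $\lambda=pn$, and the geometric-series estimate using $\lambda e^{1-\lambda/2}\to 0$ (note this genuinely needs $\lambda\to\infty$; for $\lambda$ a constant $>1$ the sum is only $O(1)$ and the conclusion would fail, so your use of the hypothesis is essential and correctly placed). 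The only external input you use is that a.a.s.\ every non-largest component has size at most $n/2$; this is the same supercritical structure theorem already invoked in the proof of Theorem~\ref{thm:generic_relhyp}, and in fact you could lighten even that dependence by observing that at most one component can have more than $n/2$ vertices, so showing no non-tree component of size $\leq n/2$ exists already pins any non-tree component down to the unique largest one. What your approach buys is self-containedness and an explicit quantitative bound; what the paper's buys is brevity, at the cost of leaning on the cited results of Bollob\'as.
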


\begin{proof}[Proof of Lemma~\ref{lem:unicyclics}]
This follows immediately from~\cite[Theorem~6.10.(iii)]{Bollobas:book} and Theorem~\cite[Theorem~2.2.(ii)]{Bollobas:book}.
\end{proof}

\begin{lem}\label{lem:5_vertices}
If $\Lambda\in\mathcal T$, then either $\Lambda\cong K_{2,2}$ or $\Lambda$ contains $K_{2,3}$.
\end{lem}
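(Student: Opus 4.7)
My plan is to prove the lemma by induction on the construction of $\Lambda$ as an element of $\mathcal T$, using the three generating rules from Definition~\ref{defn:thick_graph}. Here ``$\Lambda$ contains $K_{2,3}$'' should be read as containing a (not necessarily induced) copy of $K_{2,3}$; this is the reading consistent with the use of the lemma in the proof of Theorem~\ref{thm:generic_relhyp}, and is in fact necessary, since coning $K_{2,2}$ off itself produces a graph with a universal vertex which contains no induced $K_{2,3}$.

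The base case $\Lambda \cong K_{2,2}$ is immediate. For the two inductive cases, note that $\Lambda$ always contains each constituent used in its construction as an induced subgraph, so if any constituent already contains $K_{2,3}$ as a subgraph, then so does $\Lambda$. Hence the real content is concentrated in the cases where every constituent is isomorphic to $K_{2,2}$. To handle these I first enumerate the relevant induced subgraphs of $K_{2,2}$: since $K_{2,2}$ has no triangle and no three pairwise non-adjacent vertices, its induced non-clique subgraphs are $K_{2,2}$ itself, the $P_3$'s (any three of its vertices), and the two non-edges, which are also exactly the induced subgraphs of diameter greater than one.

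For the coning rule applied to $\Gamma \cong K_{2,2}$: write the vertices of $\Gamma$ as $\{a,b,c,d\}$ with non-edges $\{a,c\}$ and $\{b,d\}$, and let $v$ denote the new cone vertex. In each of the three possibilities for the coned-off subgraph, I can verify that the pair $\{a,c\}$ acquires at least three common neighbors drawn from $\{b,d,v\}$, immediately yielding $K_{2,3}$ as a subgraph. For the generalized union rule applied to $\Gamma_1 \cong \Gamma_2 \cong K_{2,2}$ with common non-clique subgraph $\Gamma_0$: if $\Gamma_0 \cong K_{2,2}$ then $\Lambda \cong K_{2,2}$ because both side-sets $\Gamma_i \setminus \Gamma_0$ are empty; if $\Gamma_0 \cong P_3$ with endpoints $a, c$ and middle vertex $b$, the ``fourth'' vertex of each $\Gamma_i$ together with $b$ supplies three common neighbors of $\{a,c\}$; if $\Gamma_0$ is the non-edge $\{a,c\}$, then the remaining two vertices of each $\Gamma_i$ supply four common neighbors of $\{a,c\}$. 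The additional edges permitted by the ``generalized'' construction only enlarge the edge set, preserving any $K_{2,3}$ already present.

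The argument is thus essentially a finite case analysis once the inductive framework is in place, so the only real obstacle is keeping the reading of ``contains $K_{2,3}$'' straight: as a (non-induced) subgraph the lemma holds and the case analysis closes, whereas under the stricter induced-subgraph reading the coning-$K_{2,2}$-off-itself example would already refute it.
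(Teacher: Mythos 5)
Your proof is correct but takes a genuinely different route from the paper's. You argue by induction on the construction of $\Lambda$ in $\mathcal T$, reducing to a finite case analysis of how $K_{2,2}$ behaves under the coning and generalized-union moves of Definition~\ref{defn:thick_graph}. The paper instead works directly with a maximal join $\Lambda_0\subset\Lambda$ of two non-clique factors: if $\Lambda_0$ is larger than $K_{2,2}$ then one factor has a non-adjacent pair each adjacent to at least three vertices of the other factor, giving $K_{2,3}$; if $\Lambda_0\cong K_{2,2}$ and $\Lambda\neq\Lambda_0$, then a vertex of $\Lambda\setminus\Lambda_0$ sharing two non-adjacent neighbours with $\Lambda_0$ extends it to $K_{2,3}$. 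Your inductive framing makes explicit \emph{why} such an external vertex with two non-adjacent neighbours in the base $K_{2,2}$ must arise when $\Lambda\neq K_{2,2}$, a point the paper's proof leaves implicit in the line ``Hence $\Lambda\cong K_{2,2}$, a contradiction''; this is the main thing your approach buys. You are also right to flag that ``contains $K_{2,3}$'' must be read in the not-necessarily-induced sense: this is the reading used when the lemma is applied in the proof of Theorem~\ref{thm:generic_relhyp}, and, as you observe, coning $K_{2,2}$ off itself already refutes the induced-subgraph reading. One small imprecision: in the coning case you assert that the pair $\{a,c\}$ always acquires a third common neighbour, but for a $P_3$ whose midpoint is $a$ or $c$ it is the other non-edge $\{b,d\}$ that does; by the symmetry of $K_{2,2}$ this is harmless, but it should be stated carefully.
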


\begin{proof}
Since $\Lambda$ must contain the join of two subgraphs of diameter at least 2, $|\Lambda^0|\geq 4$ and either $\Lambda\cong K_{2,2}$ or $|\Lambda|\geq 5$.  In the latter case, suppose that each maximal join in $\Lambda$ is isomorphic to $K_{2,2}$ and let $\Lambda_0\subset\Lambda$ be such a join.  Then no two non-adjacent vertices in $\Lambda_0$ have a common adjacent vertex, since otherwise $\Lambda_0$ would extend to a copy of $K_{2,3}$.  Hence $\Lambda\cong K_{2,2}$, a contradiction.
\end{proof}

\subsection{Behavior at high densities}\label{subsec:high_density}
Charney-Farber showed in~\cite{CharneyFarber} that a random right-angled Coxeter group on $n$ vertices is a.a.s.\ finite when $(1-p(n))n^2\rightarrow 0$ as $n\rightarrow\infty$.  The following description of random right-angled Coxeter groups for rapidly-growing $p(n)$ generalizes this result.

\begin{thm}\label{thm:high_density}
Suppose $(1-p(n))n^2\rightarrow\alpha$ as $n\rightarrow\infty$, for
some $\alpha\in[0,\infty)$ and let the random variable $M_n$ count the
number of ``missing edges'' in $\Gamma\in\mathcal G(n,p)$, i.e.\ the
number of pairs of distinct vertices that are not joined by an edge.
Then $M_n=O(1)$ a.a.s.\ and:
\begin{enumerate}
 \item With probability tending to $e^{-\alpha/2}$, $M_n=0$ and the group $W_{\Gamma}$ is finite.
 \item With probability tending to $\frac{\alpha}{2}e^{-\alpha/2}$, $M_n=1$ and the group $W_{\Gamma}$ is virtually $\mathbb Z$ and thus hyperbolic.
 \item With probability tending to $1-(1+\frac{\alpha}{2})e^{-\alpha/2}$, $M_n\geq 2$ and the group $W_{\Gamma}$ is virtually $\mathbb Z^{M_n}$, and is thus thick of order 0 and has linear divergence.
\end{enumerate}
\end{thm}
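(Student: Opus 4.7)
The plan is to reduce the structural claim about $W_\Gamma$ to an analysis of the complement graph $\bar\Gamma$, and then to establish the three probabilities by proving that $M_n$ is asymptotically Poisson with parameter $\alpha/2$.

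First, I would compute the expectation $\expect(M_n) = \binom{n}{2}(1-p(n)) \to \alpha/2$, and argue that $M_n$ converges in distribution to a Poisson random variable of parameter $\alpha/2$. Since $M_n$ is a sum of $\binom{n}{2}$ independent indicator variables (one for each potential missing edge), each with probability $(1-p(n)) \to 0$, this is a textbook application of Poisson convergence (e.g.\ by the method of factorial moments or the Chen--Stein method). This immediately yields $\prob(M_n = 0) \to e^{-\alpha/2}$, $\prob(M_n = 1) \to (\alpha/2)e^{-\alpha/2}$, and $\prob(M_n \geq 2) \to 1 - (1 + \alpha/2)e^{-\alpha/2}$, and in particular $M_n = O(1)$ a.a.s.

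Second, I would prove that a.a.s.\ $\bar\Gamma$ is a disjoint union of isolated vertices and a matching. A path of length $2$ in $\bar\Gamma$ corresponds to an ordered triple $(a,b,c)$ of distinct vertices with both $ab$ and $bc$ missing in $\Gamma$. The expected number of such paths is bounded by $n(n-1)(n-2)(1-p(n))^2 \sim \alpha^2/n \to 0$, so by Markov's inequality $\bar\Gamma$ a.a.s.\ contains no path of length $2$, i.e.\ its edge set is a matching.

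Third, conditional on $\bar\Gamma$ being a matching of size $M_n = k$ supported on vertices $\{a_1,b_1,\ldots,a_k,b_k\}$, with remaining vertices $v_1,\ldots,v_{n-2k}$, the graph $\Gamma$ decomposes as the join
\[
\Gamma \;=\; \{a_1,b_1\} \star \{a_2,b_2\} \star \cdots \star \{a_k,b_k\} \star \{v_1,\ldots,v_{n-2k}\},
\]
where each factor $\{a_i,b_i\}$ spans no edge and the last factor spans a clique. By the graph-product description of right-angled Coxeter groups recalled in Section~\ref{subsec:coxeter_background}, a join of graphs corresponds to a direct product of Coxeter groups, so
\[
W_\Gamma \;\cong\; D_\infty^{\,k} \times (\mathbb Z_2)^{n-2k}.
\]
Since $D_\infty$ is virtually $\Z$, the group $W_\Gamma$ is virtually $\Z^{k} = \Z^{M_n}$.

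Fourth, I would read off the qualitative consequences in the three cases. When $M_n = 0$, $W_\Gamma = (\Z_2)^n$ is finite. When $M_n = 1$, $W_\Gamma$ is virtually $\Z$, hence hyperbolic. When $M_n \geq 2$, $W_\Gamma$ is virtually $\Z^{M_n}$ with $M_n \geq 2$, which has linear divergence and is thick of order~$0$ (this also follows from Proposition~\ref{prop:general_join}, since $\Gamma$ decomposes as a nontrivial join of two aspherical subgraphs). The main work is really just the Poisson limit in Step~1 and the vanishing of length-$2$ paths in Step~2; both are standard second-moment-type Erd\H{o}s--R\'enyi estimates, and no step presents a serious obstacle.
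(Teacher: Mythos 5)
Your proof is correct and uses essentially the same approach as the paper: both arguments establish that the complement graph is a.a.s.\ a matching (your count of length-$2$ paths in $\bar\Gamma$ is equivalent to the paper's comparison of the number of vertices belonging to exactly one versus at least one missing edge), which yields the join decomposition of $\Gamma$ and hence $W_\Gamma\cong D_\infty^{M_n}\times\mathbb Z_2^{n-2M_n}$. The only cosmetic difference is that you package the probability computations as a Poisson limit for $M_n$, whereas the paper computes $\prob(M_n=0)$ and $\prob(M_n=1)$ directly and then invokes Markov's inequality for the $M_n=O(1)$ claim.
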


\begin{proof}
\textbf{Finite and virtually $\mathbb Z$:}  If $M_n=0$, then $\Gamma$ is a complete graph, so that $W_{\Gamma}\cong\mathbb Z_2^n$ is finite.  Conversely, if $W_{\Gamma}$ is finite, then since any two nonadjacent vertices together generate a subgroup isomorphic to $D_{\infty}$, we see that $M_n=0$.  Similarly, $W_{\Gamma}$ is virtually $\mathbb Z$ if and only if $M_n=1$.

For $k\geq 0$, we have $$\prob(M_n=k)={{n\choose 2}\choose k}(1-p(n))^kp^{{n\choose 2}-k},$$ and $$p(n)^{{n\choose 2}-k}\sim e^{-\alpha/2}.$$  Hence $\prob(M_n=0)\rightarrow e^{-\alpha/2}$ while $\prob(M_n=1)\sim{n\choose 2}\left(\frac{\alpha}{n^2}\right)e^{-\alpha/2}\rightarrow\frac{-\alpha}{2}e^{-\alpha/2}$.  This establishes the first two assertions.

\textbf{Thick of order 0:}  For each vertex $v\in\Gamma$, let $I_v$ be 1 or 0 according to whether or not $v$ belongs to exactly one missing edge, so that $\prob(I_v=1)=\expect(I_v)=n(1-p(n))p(n)^{n-2}$.  Let $E_n=\sum_vI_v$ count the number of vertices belonging to exactly one missing edge, and observe that $\expect(E_n)=n^2(1-p(n))p(n)^{n-2}\sim\alpha$.

Similarly, let $J_v$ be 1 or 0 according to whether or not $v$ belongs to at least one missing edge, and let $F_n=\sum_vJ_v$ count the vertices appearing in at least one missing edge.  Note that $\prob(J_v=1)=\expect(J_v)=1-p(n)^{n-1}$.  Hence
\begin{eqnarray*}
 \expect(F_n)&=&n(1-p(n)^{n-1})\\
 &=&n\left[1-\left(1-\frac{\alpha}{n^2}\right)^{n-1}\right]\\
 &=&\frac{\alpha n(n-1)}{n^2}+o(1)\sim\alpha.
\end{eqnarray*}

Since $F_n\geq E_n$, and $\expect(F_n-E_n)\rightarrow 0$, a.a.s.\
$F_n=E_n$.  In other words, a.a.s.\ every vertex occurs in at most one
missing edge.  Therefore, a.a.s.\ there are pairwise-distinct vertices
$v_1,\ldots,v_k,w_1,\ldots,w_k$ such that $v_i,w_i$ are not adjacent
for all $i$ and every other pair of vertices spans an edge.  This
implies that $W_{\Gamma}$ is virtually the product of $k$ copies of
$D_{\infty}$.

The above argument shows that, a.a.s.\ $M_n=\frac{E_n}{2}$.  For distinct vertices $v,w$, we have $$\prob(I_vI_w=1)=(n-1)^2p^{2n-5}(1-p)^2+p^{2n-4}(1-p),$$
from which a computation shows that $\expect(M_n)\rightarrow\frac{\alpha(\alpha+1)}{8}$.  It follows from Markov's inequality that $M_n=O(1)$ a.a.s.
\end{proof}

\subsection{Constant-density behavior}\label{sec:constant_density}
In this section, we prove:

\begin{thm}\label{thm:constant_density}
For $\Gamma\in G(n,\frac{1}{2})$, the group $W_{\Gamma}$ is a.a.s.\ thick.
\end{thm}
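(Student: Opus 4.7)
The plan is to prove $\Gamma\in\mathcal T$ a.a.s., since by Corollary~\ref{cor:thick_char} this is equivalent to $W_\Gamma$ being strongly algebraically thick. The strategy is to build $\Gamma$ from a $K_{2,2}$ seed by iterated coning, exploiting the large common neighbourhood of a non-adjacent pair as scaffolding.

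First I would establish the seed. A.a.s.\ $\Gamma$ contains a non-edge $\{u_1,u_2\}$, and a Chernoff bound on $\mathrm{Bin}(n-2,1/4)$ together with a union bound over the at most $\binom{n}{2}$ non-edges shows that a.a.s., for every non-edge, the common neighbourhood $N = N_\Gamma(u_1)\cap N_\Gamma(u_2)$ satisfies $|N|\geq n/5$. The induced subgraph on $N$ is itself a random graph at density $1/2$, so a.a.s.\ it contains two non-adjacent vertices $w_1, w_2$. The induced subgraph $H_0=\{u_1,u_2,w_1,w_2\}$ is then a $K_{2,2}$, which lies in $\mathcal T$ by Definition~\ref{defn:thick_graph}(1).

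Next I would absorb every other vertex of $N$ into the thick subgraph. Ordering $N\setminus\{w_1,w_2\}$ arbitrarily and adding vertices one at a time, at each stage the new vertex $w\in N$ satisfies $w\sim u_1$ and $w\sim u_2$, so its link in the current induced subgraph contains the non-edge $\{u_1,u_2\}$ and is therefore a non-clique of diameter $>1$. By Definition~\ref{defn:thick_graph}(2) the graph obtained by coning off this link lies in $\mathcal T$, and because the new vertex is connected to exactly its neighbours in the previous subgraph, this cone-off coincides with the induced subgraph of $\Gamma$ on the enlarged vertex set. Iterating, the induced subgraph $H$ of $\Gamma$ on $\{u_1,u_2\}\cup N$ lies in $\mathcal T$.

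Finally I would absorb every remaining vertex $v\in V(\Gamma)\setminus(\{u_1,u_2\}\cup N)$. A Chernoff bound on $\mathrm{Bin}(|N|,1/2)$ and a union bound over the at most $n$ such vertices show a.a.s.\ each $v$ has at least $n/15$ neighbours inside $N$. It is classical that $\omega(G(n,1/2))=O(\log n)$ a.a.s., so the link of $v$ in $H$ has size $\gg \omega(\Gamma)$ and therefore cannot be a clique of $\Gamma$; in particular, as an induced subgraph of $H$ it has diameter $>1$. Applying Definition~\ref{defn:thick_graph}(2) along an arbitrary ordering of the external vertices yields $\Gamma\in\mathcal T$. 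The main technical point is uniformity: every Chernoff and union-bound estimate must hold simultaneously with probability $1-o(1)$, which is routine at constant density. The computer program cited in the paper presumably tightens or refines this argument, for instance by producing an explicit small thick ``seed'' whose use sidesteps the classical $\omega(\Gamma)=O(\log n)$ input.
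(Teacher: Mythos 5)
Your argument is correct, but it is a genuinely different proof from the one in the paper. You give a direct absorption argument: a.a.s.\ every non-adjacent pair $u_1,u_2$ has a common neighbourhood $N$ of linear size, $N$ contains a non-edge (this, like your later step, is cleanest to justify via the uniform bound $\omega(\Gamma)=O(\log n)$ rather than treating the graph-dependent set $N$ as a fresh $G(|N|,\tfrac12)$, but that is exactly the routine uniformity issue you flag), the rest of $N$ is absorbed by coning since each of its vertices sees the non-edge $\{u_1,u_2\}$, and every external vertex has linearly many neighbours in $N$, which cannot form a clique, so Definition~\ref{defn:thick_graph}(2) applies again; all estimates hold simultaneously by Chernoff plus union bounds over pairs and triples of vertices. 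The paper instead proceeds through Lemma~\ref{lem:estimates}: a recursive inequality $\pi_{2n}\leq\pi_n^2+f(n)$ for $\pi_n=\prob(\Gamma\notin\mathcal T)$ obtained by splitting the vertex set in half, a Hoeffding estimate showing $f$ is small and summable (Lemma~\ref{lem:fsmall}), an accumulation-point argument showing $(\pi_n)$ must converge to $0$ or to $1$, and --- crucially --- a computer computation of $t(9)$ and $c(9)$ to bound $\pi_{18}$ away from $1$ and start the bootstrap; so the program you mention is not a refinement of your construction but an essential ingredient of that quite different scheme. Your route buys simplicity and generality: it needs no computer assistance, and the same four a.a.s.\ events (existence of a non-edge, large pairwise and triple common neighbourhoods, logarithmic clique number) give thickness verbatim at any constant density $p\in(0,1)$, not just $p=\tfrac12$; what the paper's recursive method offers in exchange is explicit finite-$n$ control of $\pi_n$ via the doubling inequality, at the cost of the numerical seed.
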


The following lemma isolates the most crucial estimates we will use in the
proof of the
theorem.

\begin{lem}\label{lem:estimates}
Let $\pi_n=\prob(\Gamma\not\in\mathcal T|\Gamma\in G(n,\frac{1}{2}))$.  Then:
\begin{enumerate}
\item $\pi_{2n}\leq\pi_n^2+f(n)$, where $f(n)=2n\sum_{i=0}^n{n \choose i}2^{-n-{i \choose 2}}$.\label{item:split}
\item $\pi_{2n}\leq\pi_n^2+2\pi_n(1-\pi_n)\frac{nc(n)}{2^nt(n)}+(1-\pi_n)^2$, where $c(n)$ is the number of cliques in the disjoint union of all $\mathcal T$-graphs on $n$ vertices, and $t(n)$ is the number of such graphs.\label{item:cond_est}
\item \label{item:addone}$\pi_{n+1}\leq\pi_n+f(n)$.
\end{enumerate}
\end{lem}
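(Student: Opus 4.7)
The plan is to prove all three bounds via the same mechanism: start from a $\mathcal T$-graph on a subset of the vertices, then extend to the full graph by iteratively applying condition~(2) of Definition~\ref{defn:thick_graph}, i.e.\ by coning off induced subgraphs of diameter greater than one. The key probabilistic estimate I will use throughout is that, for a fixed vertex $v$ and a disjoint set $A$ of $n$ vertices, at density $\tfrac12$,
\[
\Pr\bigl(N(v) \cap A \text{ is a clique in } G[A]\bigr) \;=\; \sum_{i=0}^n \binom{n}{i} 2^{-n-\binom{i}{2}},
\]
since $N(v)\cap A$ is a $\tfrac12$-random subset of $A$ which, conditional on having size $i$, is uniform (and then induces a clique in the independent random graph $G[A]$ with probability $2^{-\binom{i}{2}}$).

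For part~(3), I would fix a vertex $v$ and set $A=V(G)\setminus\{v\}$: if $G[A]\in\mathcal T$ and $N_G(v)\cap A$ is not a clique, then $G$ is the cone-off of $G[A]$ along this non-clique induced subgraph, so $G\in\mathcal T$, and a union bound delivers $\pi_{n+1}\leq \pi_n + \sum_i \binom{n}{i} 2^{-n-\binom{i}{2}} \leq \pi_n + f(n)$. For part~(1), I would partition the $2n$ vertices into halves $A,B$ of size $n$ and declare the ``extension from $A$'' to \emph{succeed} if $G[A]\in\mathcal T$ and every $v\in B$ has non-clique neighborhood in $A$; iterated coning off (each added vertex's neighborhood in the current graph still contains the two non-adjacent witnesses in $A$, hence remains non-clique) then places $G$ in $\mathcal T$. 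So $G\notin\mathcal T$ implies the symmetric events $\neg U_A$ and $\neg U_B$ both hold; expanding the intersection and union-bounding, using the independence of $G[A]$ and $G[B]$, produces the three contributions $\pi_n^2$ (both halves outside $\mathcal T$) and two copies of the ``$\exists v$ with clique neighborhood in the other half'' term, each bounded by $n\sum_i \binom{n}{i}2^{-n-\binom{i}{2}}$. Summing yields $\pi_n^2 + f(n)$.

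For part~(2), I would refine this by partitioning the sample space according to which of $G[A]$ and $G[B]$ lies in $\mathcal T$. The ``both outside $\mathcal T$'' case contributes at most $\pi_n^2$. In each of the two symmetric ``exactly one good'' cases, failure reduces (as in part~(1)) to the existence of a vertex of the bad half with clique neighborhood in the good half; since the conditional distribution of $G[A]$ given $G[A]\in\mathcal T$ is uniform on $\mathcal T$-graphs on $n$ vertices, the expected number of cliques in $G[A]$ conditional on $G[A]\in\mathcal T$ equals $c(n)/t(n)$, and a union bound over the $n$ vertices of the bad half gives conditional failure probability $\tfrac{nc(n)}{2^n t(n)}$, hence total contribution $\pi_n(1-\pi_n)\cdot \tfrac{nc(n)}{2^n t(n)}$. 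The ``both good'' case is bounded trivially by $(1-\pi_n)^2$. The one point I will need to verify carefully is the independence used in the ``exactly one good'' cases: the event ``$\exists v\in B$ with clique neighborhood in $G[A]$'' depends only on edges inside $A$ and between $A$ and $B$, hence is independent of $G[B]$, which is what allows the clean factorization. No analogous improvement is available for the ``both good'' case, so that contribution remains trivial; consequently bound~(\ref{item:cond_est}) is informative only in regimes where $\pi_n$ is bounded away from~$0$.
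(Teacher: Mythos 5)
Your proof is correct and follows essentially the same strategy as the paper: split the vertex set into halves, condition on which halves induce $\mathcal T$-graphs, and bound the probability that some vertex of one half has a clique-valued link in the other half, using the clean formula $\sum_{i}\binom{n}{i}2^{-n-\binom{i}{2}}$ for the per-vertex clique-link probability and the independence of $G[A]$, $G[B]$, and the cross edges. The one small variation is the combinatorial extension lemma: you grow a $\mathcal T$-graph to the full graph by iterated coning alone (noting that each new vertex's link in the growing graph still contains the non-adjacent witness pair from $A$), whereas the paper first cones to get each $\Gamma_v=G[A\cup\{v\}]\in\mathcal T$ and then repeatedly applies the generalized-union operation along $A$; both are valid, and yours avoids invoking operation~(3) of Definition~\ref{defn:thick_graph} at all in parts~(1) and~(3).
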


\begin{proof}
Let $\Gamma\in G(2n,\frac{1}{2})$ and let $A\sqcup B$ be a partition of $\Gamma^{(0)}$ into sets of size $n$.  For $v\in B$, we denote by $\link_A(v)$ the set of vertices in $A$ adjacent to $v$.  Note that if $\Gamma\not\in\mathcal T$, then one of the following holds:
\begin{enumerate}[(i)]
 \item \label{item:bothrelhyp} The subgraphs generated by $A,B$ are not in $\mathcal T$.
 \item \label{item:clique} There exists $v\in B$ [or $v\in A$] such that $\link_A(v)$ [or $\link_B(v)$] is a (possibly empty) clique.
 \end{enumerate}
To establish this dichotomy, first we assume~\eqref{item:bothrelhyp}
does
not hold, and hence, without loss of generality we may assume
the subgraph generated by
$A$ is in $\mathcal T$.  If additionally~\eqref{item:clique} does not
hold we show this yields $\Gamma\in\mathcal T$ which is a
contradiction. Condition~\eqref{item:clique} implies that
for each vertex $v$ of $B$
the set $\link_A(v)$ is nonempty and has diameter exceeding 1.
Now, for each $v\in B$ we have that the subgraph $\Gamma_v$ of $\Gamma$
generated by $A\cup\{v\}\in\mathcal T$ is
in $\mathcal T$ since it is obtained by coning off a set of diameter at
least 2 and applying
Definition~\ref{defn:thick_graph}(2).
Also, for each $v,v'\in B$, since the graphs $\Gamma_v$ and
$\Gamma_{v'}$ are both thick and their intersection is the thick
graph generated by $A$, we see that the graph generated by
$A\cup\{v,v'\}$ which is the generalized union of $\Gamma_v$  and
$\Gamma_{v'}$ and is thus thick by  Definition~\ref{defn:thick_graph}(3).
Thus, by adding one vertex from $B$ at a time in the above way we see
that $\Gamma\in\mathcal T$.

Next, we claim that $\prob((\ref{item:bothrelhyp}))=\pi_n^2$.  Indeed, since in
the construction of $\Gamma$, edges joining pairs of vertices in $A$
are added independently of those joining vertices in $B$, the events
``$A$ generates a subgraph in $\mathcal T$'' and ``$B$ generates a
subgraph in $\mathcal T$'' are independent.  Moreover, the subgraphs
of $\Gamma$ generated by $A$ and $B$ are in $G(n,\frac{1}{2})$.  It
follows that~\eqref{item:bothrelhyp} occurs with probability
$\pi_n^2$, whence $$\pi_{2n}\leq\pi_n^2+\prob((\ref{item:clique})).$$

We finally show that $\prob(\mathrm{(\ref{item:clique})})\leq f(n)$.
To this end, let $\mathcal V$ be the number of vertices of $B$ whose
links in $A$ are (possibly empty) cliques.  Then
$\prob(\mathrm{(\ref{item:clique})})\leq2$ and $\prob(\mathcal V>0)\leq2\expect(\mathcal
V)$.  The initial factor of $2$ reflects the fact that we are assuming
that $A\in\mathcal T$ and counting vertices in $B$ whose links in $A$
are cliques; $(\ref{item:clique})$ could just as easily occur with the roles
of $A,B$ reversed.

For each $v\in B$, if $\link_A(v)$ has $k$ vertices, then it is
generated by one of ${n\choose k}$ subsets of $A$.  Each such subset
is a clique with probability $2^{-{k\choose 2}}$, and such a subset
generates $\link_A(v)$ with probability $2^{-k}2^{k-n}=2^{-n}$,
reflecting the fact that the $k$ vertices of the putative link must be
adjacent to $v$ and the $n-k$ remaining vertices of $A$ must not.
Summing over $k$ yields the probability that $\link_A(v)$ is a clique,
so that $\expect(\mathcal V)=n\sum_{k=0}^n{n\choose k}2^{-n-{k\choose
2}}$, and Claim~\eqref{item:split} follows.

To establish claim~\eqref{item:cond_est}, write $\Gamma^{(0)}=A\sqcup B$ as above.  If $\Gamma\not\in\mathcal T$, then one of the following holds:
\begin{enumerate}
 \item the subgraphs generated by $A,B$ are both not in $\mathcal T$. This event occurs with probability $\pi_n^2$.
 \item Exactly one of the subgraphs generated by $A,B$ belongs to $\mathcal T$.  In this case, suppose that $A$ generates a subgraph in $\mathcal T$.  This subgraph is among the $t(n)$ graphs of its size in $\mathcal T$, and as above, $B$ must contain a vertex $v$ whose link in $A$ generates one of the $c(n)$ possible cliques.  There are $n$ choices for this vertex, and each has a given clique as its link with probability at most $2^{-n}$.  Hence this situation occurs with probability at most $2\pi_n(1-\pi_n)nc(n)2^{-n}t(n)^{-1}$.
 \item The subgraphs generated by $A,B$ both belong to $\mathcal T$.  In this case, it must be true that some vertex in $A$ has link in $B$ a clique (or vice versa), but we do not use this fact; we just note that the probability of this event is certainly at most $(1-\pi_n)^2$.
\end{enumerate}

Finally, to establish Claim~\eqref{item:addone}, regard the size-$(n+1)$ graph $\Gamma$ as the subgraph of $\Gamma$ generated by $A\sqcup\{v\}$, with $v$ a vertex.  If $\Gamma\not\in\mathcal T$, then either $A\not\in\mathcal T$ or the link of $v$ is a clique.  The claim now follows by arguing as in the proof of Claim~\eqref{item:split}.
\end{proof}

\begin{rmk} The relation between the first two parts of the
    above lemma are as follows.
In the language of conditional probability, to prove
Lemma~\ref{lem:estimates}\eqref{item:split} we use the
fact that:
$$\pi_{2n}\leq \prob[A,B\notin \mathcal
T]+\prob[\eqref{item:clique}].$$

Whereas, for Lemma~\ref{lem:estimates}\eqref{item:cond_est} we exploited
the following:
$$\pi_{2n}\leq \prob[A,B\notin \mathcal T]+2\prob[A\in \mathcal T,
B\notin \mathcal T]\cdot\prob[\eqref{item:clique}_B| A\in \mathcal T,
B\notin \mathcal T]+ \prob[A,B\in \mathcal T],$$
where $\eqref{item:clique}_B$ is the same as $\eqref{item:clique}$
except that we require only the condition on links of vertices of $B$.
We then sum over these probabilities to yield Lemma~\ref{lem:estimates}\eqref{item:cond_est}.
\end{rmk}

We will make use of the following estimate:

\begin{lem}\label{lem:hoeffding}
Let $X_n$ be a binomial random variable with mean $\frac{1}{2}\cdot n$ and variance $\frac{1}{4}\cdot n$.  Then for all $M\leq\frac{n}{2}$, we have $$\prob\left(X_n\leq M\right)\leq\exp\left(-\frac{n}{2}+2M-\frac{2M^2}{2}\right).$$
\end{lem}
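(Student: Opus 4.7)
The plan is to derive this estimate as a direct application of Hoeffding's concentration inequality for sums of bounded independent random variables. Since $X_n$ is binomial with parameters $n$ and $1/2$, we can write $X_n = \sum_{i=1}^n Y_i$, where the $Y_i$ are i.i.d.\ Bernoulli random variables with parameter $1/2$ taking values in $[0,1]$, and $\expect(X_n) = n/2$.

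By the hypothesis $M \leq n/2$, the quantity $t := n/2 - M$ is nonnegative, and the event $\{X_n \leq M\}$ is precisely the event $\{X_n - \expect(X_n) \leq -t\}$. Hoeffding's inequality for sums of independent $[0,1]$-valued variables then gives
$$\prob(X_n \leq M) \;\leq\; \exp\!\left(-\frac{2t^2}{n}\right).$$

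The remaining step is the routine algebraic expansion $t^2 = (n/2 - M)^2 = n^2/4 - nM + M^2$, whence $-2t^2/n = -n/2 + 2M - 2M^2/n$, which matches the exponent in the statement. There is no real obstacle here: the result is a one-line consequence of a standard concentration inequality, with the only work being the routine simplification of the exponent.
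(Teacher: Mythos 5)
Your proof is correct and is essentially identical to the paper's, whose entire argument is to view $X_n$ as a sum of $n$ Bernoulli trials and cite Hoeffding's inequality; you have merely filled in the routine algebra with $t=\frac{n}{2}-M$. One small remark: the exponent you obtain, $-\frac{n}{2}+2M-\frac{2M^2}{n}$, is the correct one, and the $\frac{2M^2}{2}$ appearing in the statement is evidently a typo (the paper itself uses $\frac{2M^2}{n}$ when it applies the lemma, and the version with denominator $2$ would be false, e.g.\ for $M=\frac{n}{2}$ and $n$ large).
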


\begin{proof}
Viewing $X_n$ as the sum of $n$ Bernoulli trials, this follows from Hoeffding's inequality~\cite{Hoeffding}.
\end{proof}

\begin{lem}\label{lem:fsmall}
The function $f$ of Lemma~\ref{lem:estimates} has the following properties:
\begin{enumerate}
 \item  $f(n)\stackrel{n}{\longrightarrow}0$ exponentially and, in particular, $\sum_{n\geq0}f(n)<\infty$.
 \item $f(n)<0.03760$ for all $n\geq 18$.
\end{enumerate}

\end{lem}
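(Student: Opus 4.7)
The strategy is to rewrite
\[
f(n) = 2n\cdot 2^{-n}\cdot S(n), \qquad S(n) := \sum_{i=0}^{n}\binom{n}{i}2^{-\binom{i}{2}},
\]
and then argue that $S(n)$ grows only sub-exponentially in $n$, so that the prefactor $2^{-n}$ forces $f(n)$ to decay exponentially.

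For part (1), I would study the terms $a_i := \binom{n}{i}2^{-\binom{i}{2}}$. Their ratio
\[
\frac{a_{i+1}}{a_i} \;=\; \frac{n-i}{i+1}\cdot 2^{-i}
\]
drops below $1$ as soon as $i$ exceeds $\log_2 n + O(1)$, so the sequence $(a_i)_{i=0}^{n}$ is unimodal with peak at some $i_0 = O(\log n)$. Bounding the peak crudely by $a_{i_0}\leq \binom{n}{i_0}\leq n^{i_0} = 2^{O((\log n)^2)}$, and noting that the sum has at most $n+1$ terms, we obtain $S(n) \leq (n+1)\cdot 2^{O((\log n)^2)}$. Substituting back yields
\[
f(n) \;\leq\; 2n(n+1)\cdot 2^{-n + O((\log n)^2)},
\]
which decays exponentially in $n$ and is thus summable.

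For part (2), I would combine this asymptotic estimate with a direct numerical verification on an initial range. The plan is to choose $N_0$ large enough that the bound above is less than $0.03760$ for every $n\geq N_0$, and then compute $f(n)$ explicitly for each $n\in\{18,19,\dots,N_0\}$. Unimodality of $(a_i)$ implies that only $O(\log n)$ terms contribute meaningfully to $S(n)$, so each explicit computation is routine. The main potential obstacle is keeping $N_0$ small enough to make the verification tractable, since the crude sub-exponential bound is not especially tight. If necessary, one can close the gap with a supplementary monotonicity argument: using the Pascal identity one obtains
\[
S(n+1) \;=\; S(n) \;+\; \sum_{j\geq 0}\binom{n}{j}\,2^{-\binom{j}{2}-j},
\]
and the unimodality analysis from part (1) shows the added sum is at most $O(S(n)/n)$, so $f(n+1)/f(n) = \tfrac{n+1}{2n}\bigl(1+O(1/n)\bigr)\to \tfrac{1}{2}$. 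This yields eventual monotone decrease of $f$, after which a single explicit check at some $N_1\geq 18$ giving $f(N_1) < 0.03760$ finishes the argument.
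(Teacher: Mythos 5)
Your proposal is correct but follows a genuinely different route from the paper's. The paper splits the sum at $M=\lfloor n^{2/3}\rfloor$, recognizes $2^{-n}\sum_{i\leq M}\binom{n}{i}$ as the left-tail probability $\prob(X_n\leq M)$ of a symmetric binomial, and invokes Hoeffding's inequality (Lemma~\ref{lem:hoeffding}) to bound it; the tail $i>M$ is controlled simply by $2^{-\binom{M}{2}}$. This yields an explicit upper bound $f(n)\leq 2nh(n)+2n\cdot 2^{-n^{2/3}(n^{2/3}-1)/2}$ in terms of decreasing functions of $n$, so the quantitative claim (2) reduces to a single evaluation at $n=18$. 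Your approach instead factors $f(n)=2n\cdot 2^{-n}S(n)$ and shows $S(n)$ is subexponential by a unimodality argument: the ratio $\frac{a_{i+1}}{a_i}=\frac{n-i}{i+1}2^{-i}$ locates the peak at $i_0=O(\log n)$, and $a_{i_0}\leq n^{i_0}=2^{O((\log n)^2)}$. This is cleaner conceptually and more elementary (no concentration inequality needed), and it proves part (1) completely. The trade-off comes in part (2): because your bound on $S(n)$ is cruder than Hoeffding's, the threshold $N_0$ beyond which the bound falls below $0.03760$ is larger (roughly $N_0\approx 50$ by a back-of-envelope estimate), so you must compute $f(n)$ explicitly for a range of values of $n$ rather than at one point; this is tractable but less tidy. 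Your fallback monotonicity argument is sound in spirit — the Pascal-identity decomposition $S(n+1)=S(n)+\sum_j\binom{n}{j}2^{-\binom{j}{2}-j}$ is correct, and $f(n+1)/f(n)\to 1/2$ does follow once one shows the added sum is $O(S(n)/n)$ — but that last estimate is a weighted-average claim that needs its own argument (most of the mass of $(a_j)$ sits near $j_0\approx\log_2 n$ where $2^{-j}\approx 1/n$), and converting ``eventual monotonicity'' into ``monotone for $n\geq 18$'' requires additional explicit constants. In short: same ultimate structure (exponential decay from the $2^{-n}$ factor plus a finite check), different key estimate (peak-term counting vs.\ Hoeffding), with the paper's choice buying sharper constants and hence a shorter explicit verification.
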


\begin{proof}
Let $M=\lfloor n^{a/b}\rfloor$ for natural numbers $a<b$, and write
\begin{eqnarray*}
f(n)&=&2n\left[\sum_{i=0}^M{n\choose i}2^{-n-{i\choose 2}}+\sum_{i=M+1}^n{n\choose i}2^{-n-{i\choose 2}}\right]\\
&=&2n\cdot\mathrm{(I)}+2n\cdot\mathrm{(II)}.
\end{eqnarray*}
For each $n$, $$\mathrm{(I)}\leq2^{-n}\sum_{i=0}^M{n\choose i}=\prob(X_n\leq M),$$
where $X_n$ is a binomial random variable with mean $n\cdot\frac{1}{2}$.  From Lemma~\ref{lem:hoeffding}, we have, for $M\leq n/2$,
\begin{eqnarray*}
\mathrm{(I)}&\leq&\exp\left[-\frac{n}{2}+2M-\frac{2M^2}{n}\right]\\
&\leq&e^{-n/2}e^{2\lfloor n^{a/b}\rfloor}e^{-2\lfloor n^{a/b}\rfloor^2/n}:=g(n,M)
\end{eqnarray*}

We also have:
\begin{eqnarray*}
\mathrm{(II)}&\leq& 2^{-n-{M\choose 2}}\sum_{i=M+1}^n{n\choose i}\\
&\leq&2^{-{M+1\choose 2}}\leq2^{-n^{a/b}(n^{a/b}-1)/2}.
\end{eqnarray*}
Suppose now that $a,b$ also satisfy $2a/b>1$. Then the lemma follows from summing the above estimates: $f(n)$ decays exponentially and is hence summable.  This establishes the first assertion.

The second assertion requires a refinement of one of the above bounds.  Let $a=2,b=3$, and let $M=\lfloor n^{a/b}\rfloor, X_n,$ and the expressions $\mathrm{(I)}$ and $\mathrm{(II)}$ be as above.  As before, we have $$\mathrm{(II)}\leq2^{-n^{2/3}(n^{2/3}-1)/2}.$$  We need to estimate $\mathrm{(I)}$ more carefully when $n\geq 18$. We thus write:
\begin{eqnarray*}
\mathrm{(I)}&\leq&2^{-n}\left(\sum_{i=0}^5{n \choose i}2^{-{i\choose 2}}\right)+2^{-{6\choose 2}}\prob(X_n\leq \lfloor n^{2/3}\rfloor)\\
&\leq&2^{-n}\left(\sum_{i=0}^5{n \choose i}2^{-{i\choose 2}}\right)+2^{-{6\choose 2}}g(n,\lfloor n^{2/3}\rfloor):=h(n).\\
\end{eqnarray*}
The second inequality is an application of Lemma~\ref{lem:hoeffding}, justified by the fact that $n^{2/3}<n/2$ for $n\geq 18$.  Hence $$f(n)\leq 2nh(n)+2n\cdot2^{-n^{2/3}(n^{2/3}-1)/2}.$$  The second term is strictly decreasing for $n\geq 8$, as can be seen by differentiating, and takes a value less than $3.09\cdot10^{-5}$ at $n=18$.  Next, a straightforward computation gives $$g(n,\lfloor n^{2/3}\rfloor)\leq\exp\left(-\frac{n}{2}+2n^{2/3}-2n^{1/3}+4n^{-1/3}-\frac{2}{n}\right),$$
which is decreasing for $n\geq 12$ and, for $n=18$, yields $$2n\cdot2^{-{6\choose 2}}\cdot g(n,\lfloor n^{2/3}\rfloor)\leq0.00273.$$ The remaining term can be shown by direct differentiation to decrease for $n\geq5$, and takes the value $0.3484$ at $n=18$.  Combining the above shows that $f(n)\leq3.09\cdot10^{-5}+0.00273+0.03484=0.03760$ for $n\geq18$.
\end{proof}

\begin{rmk}
The bound provided by Lemma~\ref{lem:fsmall}.(2) is somewhat crude, since in fact $f(18)\approx0.00101$.  However, as we will see in the proof of Theorem~\ref{thm:constant_density}, any bound sharper than around $f(18)\leq0.06045$ is sufficient.
\end{rmk}

\begin{proof}[Proof of Theorem~\ref{thm:constant_density}] The idea of
the proof is to use Lemma~\ref{lem:estimates}.\eqref{item:split}
and the fact that $f$ is small to get convergence to $0$ of a
subsequence of $(\pi_n)$. Then, we use this in order to show that $(\pi_n)$
converges to $0$, and then apply
Lemma~\ref{lem:estimates}.\eqref{item:addone} and the summability of
$f$.

{\bf Accumulation at $0$ implies convergence to $0$.} For each $n,k$,
Lemma~\ref{lem:estimates}.\eqref{item:addone} yields: $$\pi_{n+k}\leq\pi_n+\sum_{i=0}^{k-1}f(i+n)<\pi_n+\sum_{i=n}^{\infty}f(i).$$  Suppose that $0$ is an accumulation point of $(\pi_n)$.  Then for each $\epsilon>0$, we can choose $n$ so that $\pi_n<\frac{\epsilon}{2}$ and $\sum_{i=n}^{\infty}f(n)<\frac{\epsilon}{2}$.  The latter inequality follows from summability of $f$, i.e.\ from Lemma~\ref{lem:fsmall}.(1).  Hence for all $k$, we have $\pi_{n+k}<\epsilon$, i.e.\ $\pi_n\stackrel{n}{\longrightarrow}0$.


{\bf Non-accumulation at $0$ implies convergence to $1$.} Suppose now that the subsequence $(\pi_{k\cdot 2^m})_{m\in\naturals}$ does not have $0$ as an accumulation point for some $k\in\naturals$. Then we claim that $(\pi_{k\cdot 2^m})$ converges to $1$. Indeed, consider the smallest accumulation point $\pi$ of the sequence, and suppose that it is the limit of the subsequence $(\pi_{k\cdot 2^{m_i}})_{i\in\naturals}$. We have to show $\pi=1$. By Lemma~\ref{lem:estimates}.\eqref{item:split} and the fact that $f$ converges to $0$, we get that any accumulation point $\pi'$ of $(\pi_{k\cdot 2^{m_i+1}})$ satisfies $\pi'\leq \pi^2$. As we also have $\pi\leq \pi'$, we get $\pi\leq\pi^2$, so that $\pi=1$.

{\bf A subsequence bounded away from $1$.} It is thus sufficient to
show that the subsequence $(\pi_{k\cdot 2^m})_{m\in\naturals}$ is
bounded away from $1$ for some $k\in\naturals$. In fact, if this is
the case then $(\pi_{k\cdot 2^m})_{m\in\naturals}$ does not converge to $1$, hence it must have $0$ as an accumulation point, and hence $(\pi_n)$ converges to $0$ as required.  Suppose that for some $k$, we have $m_0\in\naturals$ and constants $\alpha,\beta\in[0,1)$ such that $f(k\cdot 2^m)\leq\beta$ for all $m\geq m_0$ and $\pi_{k\cdot2^{m_0}}\leq\alpha$.  Suppose, moreover, that $\alpha^2+\beta<\alpha$.  Then $\pi_{k\cdot2^{m_0+1}}<\alpha$ by Lemma~\ref{lem:estimates}.\eqref{item:split}, and by induction and the same lemma we have $\pi_{k\cdot2^{m}}<\alpha$ for all $m\geq m_0$.

Let $k=9,m_0=1$.  The computer program in Subsection~\ref{subsec:code} returned the data:
\begin{itemize}
 \item $t(9)=14853635863$;
 \item $c(9)=683846354560$;
 \item $\pi_9=1-t(9)/2^{9\choose 2}\approx0.78385$,
\end{itemize}
together with which Lemma~\ref{lem:estimates}.\eqref{item:cond_est} implies
$$\pi_{18}\leq\alpha:=\left(1-\frac{t(9)}{2^{36}}\right)^2+\frac{t(9)^2}{2^{36}}+2\left(1-\frac{t(9)}{2^{36}}\right)\cdot\frac{t(9)^2}{2^{36}}\cdot\frac{9\cdot c(9)}{512\cdot t(9)}\approx 0.93537.$$  Lemma~\ref{lem:fsmall}.(2) gives $f(n)\leq\beta=0.03760$ for all $n\geq 18$.  The above discussion, together with the fact that these values satisfy $\alpha^2+\beta<\alpha$, implies that $(\pi_{9\cdot 2^m})$ is bounded away from $1$, whence $\pi_n\stackrel{n}{\longrightarrow}0$, i.e. $\Gamma$ is a.a.s.\ in $\mathcal T$.
\end{proof}

\section{Detecting thickness algorithmically}\label{sec:algorithm}
In this section, we exhibit a polynomial-time algorithm for deciding
whether a finite graph is in $\mathcal T$.  The construction of the
algorithm presented in this section prioritized simplicity over speed.
We also provide a C++ implementation of a simple algorithm to compute
the constants needed in the proof of
Theorem~\ref{thm:constant_density}.  The main part of this computer
program implements the algorithm for deciding if a given right-angled
Coxeter group is thick.

\begin{thm}\label{thm:algorithm}
There exists an algorithm which decides, in polynomial time, whether
a graph $\Gamma$ is in $\mathcal T$.  Hence the problem of deciding whether a right-angled Coxeter group admits a relatively hyperbolic structure is soluble in polynomial time.
\end{thm}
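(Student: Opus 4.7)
The plan is to design a saturation algorithm based on the inductive characterization of $\mathcal{T}$ given in the Remark following Theorem~\ref{thm:thick_rel_hyp}, which says that $\mathcal{T}$ is the smallest collection of induced subgraphs of $\Gamma$ containing all induced joins of two aspherical subgraphs and closed under the two moves: (a) adjoining a single vertex whose link in the current subgraph is nonempty and aspherical, and (b) taking the union of two members whose intersection is aspherical. Throughout, ``aspherical'' means ``containing two non-adjacent vertices,'' which is decidable in $O(n^2)$ time.

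The algorithm itself would proceed as follows. First, enumerate the $O(n^4)$ induced $4$-cycles of $\Gamma$; these are the minimal seeds of $\mathcal{T}$ by Definition~\ref{defn:thick_graph}.(1), and by Theorem~\ref{thm:T_is_thick} each belongs to $\mathcal{T}$. Maintain a collection $\mathcal{F}$ of induced subgraphs certified to lie in $\mathcal{T}$, initialized as the list of these seeds, and iterate: whenever some $J \in \mathcal{F}$ admits a vertex $v \in V(\Gamma) \setminus V(J)$ whose link in $J$ is nonempty and aspherical, replace $J$ by $J \cup \{v\}$; whenever two members $J_1, J_2 \in \mathcal{F}$ have aspherical intersection, adjoin $J_1 \cup J_2$ to $\mathcal{F}$. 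Terminate when $\mathcal{F}$ stabilizes and report $\Gamma \in \mathcal{T}$ iff some element of $\mathcal{F}$ equals $\Gamma$. One direction of correctness is immediate since every member of $\mathcal{F}$ lies in $\mathcal{T}$ by construction; the other would follow by structural induction on a derivation witnessing $\Gamma \in \mathcal{T}$, observing that any such derivation can be reorganized to begin with $4$-cycle seeds and proceed by the moves (a) and (b) that the algorithm explores.

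The main obstacle is polynomial runtime, since a priori $\mathcal{F}$ could grow exponentially. The key observation I would exploit is monotonicity of the growth moves: if $v$ has aspherical link in $J$, then it does so in any $J' \supseteq J$, so move (a) can be applied greedily to ``saturate'' each seed in at most $n$ rounds of $O(n)$ link-tests each. After saturating all seeds, move (b) is strictly vertex-enlarging on its running union and can be implemented in a union-find fashion, stabilizing after at most $O(n)$ successful merges (each of which triggers a fresh round of move (a) saturation on the merged subgraph). The delicate point is to verify completeness of this restricted strategy: that tracking only seed-saturations and their iterated mergers suffices to reach $V(\Gamma)$ whenever any derivation does. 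I expect this to follow from a normal-form argument on derivations allowing one to commute moves (a) and (b) past each other, so that any successful derivation can be rewritten as ``saturate each seed under (a), then merge under (b), then re-saturate, etc.,'' which is precisely what the algorithm performs. The final clause of the theorem is then immediate from Theorem~\ref{thm:thick_rel_hyp}, which identifies $\Gamma \notin \mathcal{T}$ with $W_\Gamma$ being relatively hyperbolic with respect to a nontrivial family of parabolic subgroups.
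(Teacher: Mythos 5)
Your proposal is correct and is essentially the paper's own proof: the paper's algorithm likewise seeds with all induced $K_{2,2}$ subgraphs, saturates under exactly your moves (a) and (b) (its ``coning'' and ``union'' subroutines, which replace pieces by their enlargements), bounds the number of successful operations polynomially because each one either enlarges a piece or decreases the number of pieces, and deduces the second assertion from Theorem~\ref{thm:thick_rel_hyp}. The completeness point you flag is settled by the monotonicity/stability argument you sketch (once the collection is stable, induct on a derivation of $\Gamma\in\mathcal T$, noting every graph in such a derivation embeds as an induced subgraph of $\Gamma$); the paper leaves this step implicit, so there is no substantive difference in approach.
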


\begin{proof}
The second assertion follows from the first by Theorem~\ref{thm:thick_rel_hyp}.  The algorithm takes as input the finite simplicial graph $\Gamma$ on $n$ vertices and decides whether $\Gamma\in\mathcal T$. For ease of exposition, we provide an algorithm which admits an easy description, but we note that there are more efficient algorithms; in particular the code in Section~\ref{subsec:code} contains an implementation of a more efficient algorithm for the same task.  The steps are:

\begin{enumerate}
 \item Make a list $\mathcal M$ of all induced $K_{2,2}$ subgraphs of $\Gamma$.  The running time is in $O(n^4)$ and $|\mathcal M|$ is in $O(n^4)$.
 \item Make a list $\mathcal N $ of pairs of non-adjacent vertices.  The running time is in $O(n^2)$ and $|\mathcal N|$ is in $O(n^2)$.
 \item Perform a \emph{union subroutine}, i.e. for each pair $M,M'\in\mathcal M$, determine whether $M\cap M'$ contains some $(v,v')\in\mathcal N$.  If so, modify $\mathcal M$ by removing $M$ and $M'$ adding the subgraph induced by $M\cup M'$.  The running time of a union subroutine is in $O(n^{11})$.
 \item Perform a \emph{coning subroutine}, i.e. for each $M\in\mathcal M$ and each vertex $v$, determine whether there exists $(w,w')\in\mathcal N$ such that $w,w'\in M$ and both adjacent to $v$.  If so, replace $M$ by the subgraph generated by $M\cup\{v\}$.  The running time of a coning subroutine is in $O(n^7)$.
 \item If $\mathcal M$ did not change during the coning and union subroutines, then we are finished: the graph is thick if and only if $|\mathcal M|=1$ and the unique element of $\mathcal M$ is $\Gamma$.
 \item If $\mathcal M$ changed, then return to Step~(2).
\end{enumerate}

The number of union subroutines that modify $\mathcal M$ is in $O(n^4)$ since each such union subroutine decreases $|\mathcal M|$.  The number of coning subroutines that modify $\mathcal M$ is in $O(n^5)$ since each such subroutine increases the size of some subgraph in $\mathcal M$.  Hence the total running time is in $O(n^{15})$.
\end{proof}
\subsection{Computing $t(9)$ and $c(9)$}\label{subsec:code}
To obtain the values used in the proof of
Theorem~\ref{thm:constant_density}, one can use the following C++
program, which takes a single command line argument, namely the number
$n$ of vertices. We have also checked the computations by hand up to
$n=6$ beyond which they become infeasible. The reader seeking to
reproduce our computer
computation for $n=9$ should be aware that the program requires being
run for several days with typical 2013 hardware.

The efficiency of the program can be significantly improved. However, we decided to keep the code as simple as possible.  Source code for a much more efficient, albeit more complex, version of this program can be obtained from the authors.

\lstset {language=C++,backgroundcolor=\color{white},basicstyle=\scriptsize,breaklines=true,numbers=left, numbersep=5pt,numberstyle=\tiny,showspaces=false, showstringspaces=false,frame=single}

\begin{lstlisting}
#include <vector>
#include <stdio.h>
#include <stdlib.h>
#include <math.h>

using std::vector;

//DECLARATIONS

void Genmatrix(long i);
int IsThick(void);
void Squares(void);
bool Union(void);
bool CheckThick(void);
void Cliques(void);
void Nextvert(vector < int >clique);

//The adjacency matrix:

vector<vector<char> > Adj;

//The vector that will hold thick subgraphs; each graph is a length-n row whose entries are 1 or 0 according to whether the corresponding vertex is in the subgraph:

vector<vector<char> > Thick;

//The number of vertices is n; the number of cliques is clq.

int n;
long clq;
int clqtemp;

main(int argc, char \ast argv[])
{   n = atoi(argv[1]);  //Retrieves the number of vertices from the command line.

    //The following lines declare Adj as an n-by-n matrix.

    Adj.resize(n);
    for (int j = 0; j < n; ++j)
      Adj[j].resize(n);

    long count = 0;

    //For all i at most the number of graphs on a given size-n vertex set, build the adjacency matrix of the i^th graph.  This is accomplished by the function Genmatrix().  The resulting graph is then passed to the function IsThick(), which determines whether it is in the class of thick graphs.  IsThick() returns 1 or 0 according to thickness of the graph, so the variable count is increased by 1 if the graph was thick.  Thus count keeps a count of the number of thick graphs.

    for (long i = 0; i < (long) pow(2.0, n \ast (n - 1) / 2); i++) {
        Genmatrix(i);
        int add = IsThick();
        count += add;

        //If the graph was thick, count how many cliques it contains.  This number is clqtemp, which is added to the running total clq of cliques in thick graphs. We don't keep track of 0- and 1-cliques for the moment.

        if (add == 1) {
            clqtemp = 0;
            Cliques();
            clq += clqtemp;}}

    //Now we add 0- and 1-cliques, i.e. the empty set and the vertices.

    clq=clq+count\ast(n+1);

    //Print the number of thick graphs with a given set of n vertices (i.e. t(n)) and the number of cliques in the disjoint union of all such graphs (i.e. c(n)).

    printf("There are %ld thick graphs with %d vertices\n", count, n);
    printf("There are %ld cliques\n", clq);
}

void Genmatrix(long i)
{    //This function builds the i^th n-by-n symmetric matrix.

     for (int j = 0; j < n; j++) {
        for (int k = 0; k < j; k++) {
            Adj[j][k] = i % 2;
            Adj[k][j] = Adj[j][k];
            i = (long) (i - i % 2) / 2;}}
}

int IsThick()
{   //This function tests a graph for thickness.

    //First, we find all of the induced K_{2,2} subgraphs, and load them into the matrix Thick:

    Squares();

    //If there were no squares, then there are no thick subgraphs, so return 0

    if (Thick.size() == 0)
        return 0;
    else {
        bool u = true;

        //Start taking thick unions and coning off vertices.  Continue to do this (using the function Union) as long as Union is doing things.  Union operates on Thick.

        while (u)
             u = Union();

        //Check if the first line of Thick is all ones, i.e. there is a thick induced subgraph containing all vertices.  If so, return 1.  Otherwise, return 0.

        if (CheckThick())
             return 1;
        else
             return 0;}
}

void Squares()
{   //Clear Thick; we will fill this matrix with squares!  s keeps track of which line of Thick we're in.

    Thick.clear();
    int s = 0;

    //Proceed through all possible pairs of distinct vertices, keeping symmetry in mind.

    for (int i = 0; i < n; i++) {
         for (int j = i + 1; j < n; j++) {

              //We're looking for adjacent i,j; these will form one edge of our square.  Having found such a pair, find a new vertex k that is adjacent to j and not adjacent to i.  Given such a vertex, find a vertex l that completes the square.  Change the current line of Thick to the vector with 1s in the i,j,k,l places and 0s elsewhere.  Move to the next line of Thick and start again.

              if (Adj[i][j] == 1) {
              for (int k = i + 1; k < n; k++) {
                   if (Adj[j][k] == 1 && Adj[i][k] == 0) {
                        for (int l = j + 1; l < n; l++) {
                             if (Adj[i][l] == 1 && Adj[k][l] == 1
                                 && Adj[j][l] == 0) {
                                 s++;
                                 Thick.resize(s);
                                 Thick[s - 1].resize(n);
                                 Thick[s - 1][i] = 1;
                                 Thick[s - 1][j] = 1;
                                 Thick[s - 1][k] = 1;
                                 Thick[s - 1][l] = 1;}}}}}}}
}

bool Union()
{   //This function recognizes new thick subgraphs, given old ones, and modifies Thick accordingly.


    //The variable u is true if we've just performed a non-identity operation on Thick, and false otherwise.  We continue doing operations until u=false.  Again, s is the number of thick subgraphs, i.e. the number of rows in Thick.

    bool u = false;
    int s = Thick.size();

    //Iterate over all pairs of distinct vertices, accounting for symmetry.

    for (int i = 0; i < n; i++) {
         for (int j = i + 1; j < n; j++) {

              //If i,j are non-adjacent, then...

              if (Adj[i][j] == 0) {
                  int k = 0;
                  int first = -1;

                  //...move through the lines in Thick, looking for a thick subgraph containing i and j.  "first" is the identity of the first such subgraph.  If one is found (i.e. first ends up larger than -1), then...

                  while (k < s && first == -1) {
                      if (Thick[k][i] == 1 && Thick[k][j] == 1)
                          first = k;
                      else
                          k++;}

                  //...look among all vertices for p, different from i and j, that is not in the current thick subgraph and is adjacent to i,j.  If found, modify the current row of Thick by adding p; this corresponds to coning off an aspherical subgraph.  We haven't changed the number of rows in Thick, but we've made one bigger.

                  if (first != -1) {
                      for (int p = 0; p < n; p++) {
                          if (p != i && p != j && Thick[first][p] == 0
                              && Adj[i][p] == 1 && Adj[j][p] == 1) {
                              u = true;

                              Thick[first][p] = 1;}}


                      //Remembering i,j, proceed through the rows of Thick, looking for all rows of Thick that contain i and j. Add to the current row any vertex that appears in another row containing i,j, and then remove the row you've just worked on, since its vertices are recorded in the current row Thick[first].

                      while (k < s - 1) {
                         k++;
                         if (Thick[k][i] == 1 && Thick[k][j] == 1) {
                             u = true;
                             for (int p = 0; p < n; p++) {
                                  if (Thick[k][p] == 1)
                                      Thick[first][p] = 1;

                                      Thick[k][p] = Thick[s - 1][p];}
                                      s--;}}}}}}

    Thick.resize(s);
    return u;
}

bool CheckThick()
{   //Return true if and only if the first line of Thick is all 1s.

    int k = 0;
    int j = 0;

    do {
        if (Thick[0][j] == 0)
            k = 1;
            j++;
    } while (k == 0 && j < n);

    if (k == 0)
        return true;
    else
        return false;
}

void Cliques()
{   vector < int >clique;

    //For each j, clear the vector clique, add a new component equal to j, and call Nextvert.  This passes a 1-clique to Nextvert, which will find all cliques containing that clique.

    for (int j = 0; j < n; j++) {
         clique.clear();
         clique.push_back(j);
         Nextvert(clique);}
}

void Nextvert(vector < int >clique)
{   //This function accepts a s-dimensional 0 vector clique, whose entries are the vertices in some clique. The variable j is the last entry in clique.

    int s = clique.size();
    int j = clique[s - 1];

    //For all i between the last entry in clique and the size of the graph, check that the i^th vertex is adjacent to all of the vertices indexed by entries in clique.  If there's a nonadjacency, then adding i won't produce a larger clique, so move to the next i.  Otherwise, put a new entry in clique, equal to i, increment the number of cliques by 1, and pass the new vector to this function.  This terminates at a maximal clique, whereupon we pop up to the previous level of recursion, finish _that_ loop, etc.  In other words, given a clique, this function eventually counts all cliques (with at least two vertices) containing that clique.

    for (int i = j + 1; i < n; i++) {
         bool u = true;

         for (int p = 0; p < s; p++) {
              if (Adj[clique[p]][i] == 0)
                  u = false;}

         if (u) {
            clique.resize(s);
            clique.push_back(i);
            clqtemp++;
            Nextvert(clique);}}
}
\end{lstlisting}

\appendix
\section{Generalizing to all Coxeter groups.\\\small{By J. Behrstock, P.-E. Caprace, M.F. Hagen and A. Sisto}}

All Coxeter groups considered here are assumed finitely generated.
In this section we generalize Theorems~\ref{thmi:rel_hyp_graph} and~\ref{thmi:thick_char} to Coxeter groups which are not necessarily
right-angled.
Further considerations are contained in Subsection~\ref{further}.

We can summarize the main result in this appendix as follows.

\begin{thm}[Minimal relatively
    hyperbolic structures]\label{thm:MinimalPeripheral}
Let $(W, S) $ be a Coxeter system. Then there is a
(possibly empty) collection $\jjj$ of subsets of $S$ enjoying the following properties:
\begin{enumerate}[(i)]
\item The parabolic subgroup $W_J$ is strongly algebraically thick  for every $J \in \jjj$.

\item If $J\neq S$ for all $J\in\jjj$, then $W$ is hyperbolic relative to
$\ppp = \{W_J \; | \; J \in \jjj\}$.

\end{enumerate}

In particular $\ppp$ is a minimal relatively hyperbolic structure for $W$.
\end{thm}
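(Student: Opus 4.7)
The plan is to define $\jjj$ to be the collection of subsets $J\subseteq S$ that are maximal, with respect to inclusion, among those for which $(W_J,J)\in\coxeterthick$. Property (i) then follows immediately from Theorem~\ref{thmi:thickgeneral}, and since $S$ is finite, $\jjj$ is finite. If some $J\in\jjj$ equals $S$, the second conclusion is vacuous and there is nothing to prove, so I will assume throughout that every $J\in\jjj$ is a proper subset of $S$.

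To establish (ii), I would invoke the combinatorial characterization of relative hyperbolicity for Coxeter groups due to Caprace, specifically Theorem~A$'$ of~\cite{Caprace:relatively_hyperbolicErr}: the group $W$ is hyperbolic relative to $\{W_J : J\in\jjj\}$ provided the three conditions (RH1)--(RH3) of that reference hold for the collection $\jjj$. Each of these three verifications is a direct contrapositive to one of the three clauses of Theorem~\ref{thmi:thickgeneral}. First, for distinct $J_1,J_2\in\jjj$, the set $J_1\cap J_2$ must be spherical, since otherwise clause (3) of Theorem~\ref{thmi:thickgeneral} would place $J_1\cup J_2$ in $\coxeterthick$, contradicting the maximality of $J_1$. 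Second, for $s\in S\setminus J$ with $J\in\jjj$, the set $s^\perp$ inside $J\cup\{s\}$ must be spherical; otherwise clause (2) would place $J\cup\{s\}$ in $\coxeterthick$, again contradicting maximality. Third, every irreducible affine subset of $S$ of cardinality at least $3$, and every commuting-product subsystem as in clause (1), lies in $\coxeterthick$ by Theorem~\ref{thmi:thickgeneral}(1) and hence is contained in some $J\in\jjj$. Together these observations correspond precisely to the Caprace conditions (RH1)--(RH3), so $W$ is hyperbolic relative to $\ppp=\{W_J : J\in\jjj\}$.

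For the minimality of $\ppp$, I would appeal to the fact that every strongly algebraically thick subgroup of a relatively hyperbolic group is conjugate into a peripheral subgroup; this is the combination of \cite[Corollary~7.9]{BDM} with \cite[Theorem~4.1]{BDM}, already invoked in the introduction. Since $W_J$ is thick for each $J\in\jjj$ by property (i), any alternative relatively hyperbolic structure on $W$ must contain a conjugate of each such $W_J$ inside one of its peripherals, which is exactly the minimality of $\ppp$.

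The main obstacle is the careful matching between the three closure axioms defining $\coxeterthick$ and the three Caprace conditions (RH1)--(RH3). The translation is cleanest when phrased as a contrapositive: any failure of (RHk) would produce a strictly larger thick subsystem containing some $J\in\jjj$, violating maximality. Once the precise form of Caprace's criteria is unpacked, the argument is essentially bookkeeping, the substantive input having already been invested in establishing Theorem~\ref{thmi:thickgeneral}.
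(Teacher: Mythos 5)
Your proof is correct and follows essentially the same route as the paper's: you define $\jjj$ as the maximal subsets $J$ with $(W_J,J)\in\coxeterthick$, verify Caprace's conditions (RH1)--(RH3) by contraposing the three closure rules defining $\coxeterthick$ against maximality, and deduce minimality from the fact that thick subgroups must be conjugate into peripherals. The only cosmetic difference is in citation bookkeeping for minimality --- you combine \cite[Corollary~7.9]{BDM} with \cite[Theorem~4.1]{BDM}, whereas the paper cites the packaged \cite[Corollary~4.7]{BDM} --- and the paper additionally invokes Moussong's theorem to note that $\jjj$ is empty precisely when $W$ is hyperbolic, a fact your verification of (RH1) covers implicitly.
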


\subsection{Thick Coxeter groups}\label{subsec:appthickcoxeter}
We consider the class $\coxeterthick$ of Coxeter systems $(W,S)$ defined as follows.

\begin{enumerate}
 \item $\coxeterthick$ contains the class $\coxeterthick_0$ of all irreducible \emph{affine} Coxeter systems $(W,S)$ with $S$ of cardinality~$\geq 3$, as well as all Coxeter systems of the form $(W,S_1 \cup S_2)$ with  $W_{S_1}, W_{S_2}$ irreducible non-spherical and $[W_{S_1}, W_{S_2}]=1$.
 \item Suppose that $(W, S \cup {s})$ is such that  ${s}^\perp$ is non-spherical and $(W_S, S)$ belongs to $\coxeterthick$. Then $(W, S \cup {s})$ belongs to $\coxeterthick$.
 \item Suppose that $(W,S)$ has the property that there exist $S_1,S_2\subseteq S$ with $S_1\cup S_2=S$, $(W_{S_1},S_1), (W_{S_2},S_2)\in \coxeterthick$ and  $W_{S_1\cap S_2}$ non-spherical. Then $(W,S)\in \coxeterthick$.
\end{enumerate}


\begin{prop}\label{prop:thickgeneral}
For $(W,S)\in \coxeterthick$, the Coxeter group $W$ is strongly algebraically thick.
\end{prop}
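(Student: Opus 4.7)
The plan is to proceed by structural induction on the construction of the class $\coxeterthick$, handling each of the three clauses in turn. The key algebraic fact I will use throughout is that parabolic subgroups of Coxeter groups satisfy the intersection formula $W_A \cap W_B = W_{A\cap B}$, and are convex in the Davis complex, which will supply the metric conditions (3) and (4) of Definition~\ref{defn:strongly_algebraically_thick} essentially for free.

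For the base class $\coxeterthick_0$, I need to show both families are wide, i.e., strongly algebraically thick of order $0$. An irreducible affine Coxeter system $(W,S)$ with $|S|\geq 3$ fits into a short exact sequence $\mathbb Z^{|S|-1} \to W \to W_0$ with $W_0$ finite, so $W$ is virtually abelian of rank $\geq 2$ and hence wide. For the second family, $W_{S_1\cup S_2} \cong W_{S_1} \times W_{S_2}$ (since $[W_{S_1},W_{S_2}]=1$ and the $S_i$ are disjoint components of the Coxeter diagram), which is a product of two infinite groups; any such product is wide. Both cases therefore give strongly algebraically thick of order $0$.

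For the coning step (clause (2)), assume by induction that $(W_S,S)$ is strongly algebraically thick and that $s^\perp$ is non-spherical. I will take $\mathcal H = \{W_S, \, W_{\{s\}\cup s^\perp}\}$ as the witnessing family for $W_{S\cup\{s\}}$. The subgroup $W_{\{s\}\cup s^\perp}\cong \langle s\rangle \times W_{s^\perp}$ is a product of $\mathbb Z/2$ with an infinite Coxeter group (non-sphericity of $s^\perp$ gives $|W_{s^\perp}|=\infty$); expanding by an additional copy of $\mathbb Z/2$ inside $s^\perp$ or, more directly, noting that $W_{\{s\}\cup s^\perp}$ contains a product of two infinite subgroups whenever $s^\perp$ itself contains a non-spherical parabolic, shows this subgroup is wide. (In the degenerate case where $s^\perp$ is non-spherical but irreducible, one uses that $W_{s^\perp}$ itself is a base-class thick group or contains one, ensuring the requisite thickness.) The union $W_S \cup W_{\{s\}\cup s^\perp}$ generates $W_{S\cup\{s\}}$ since it contains $S\cup\{s\}$, and the intersection is $W_{S\cap(\{s\}\cup s^\perp)} = W_{s^\perp}$, which is infinite. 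For clause (3), take $\mathcal H = \{W_{S_1},W_{S_2}\}$: both members are thick by induction, they generate $W_S$ because $S_1\cup S_2 = S$, and $W_{S_1}\cap W_{S_2} = W_{S_1\cap S_2}$ is infinite by non-sphericity.

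The remaining obstacle, and the main technical point, is to verify conditions (3) and (4) of the strong algebraic thickness definition, namely the path-connectivity of a uniform neighborhood of the intersection and the existence of uniform $(C,C)$-quasigeodesics inside $C$-neighborhoods of each $H\in\mathcal H$. I expect both to follow uniformly from the fact that for any $J\subseteq S$, the parabolic subgroup $W_J$ is quasi-convex (indeed, its Davis complex is an isometrically embedded convex subcomplex of the Davis complex of $W$); consequently $W_J$ is undistorted, giving (4), and intersections of such subcomplexes are again Davis complexes of parabolics, giving path-connectedness of the intersection itself (so a $C$-neighborhood suffices with $C=0$ in the cube-complex metric, and a bounded $C$ after switching to a word metric). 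The only subtlety is checking that a single constant $C$ suffices uniformly across the inductive construction; this follows because at each step we introduce only finitely many new parabolic subgroups, each of which is cocompactly embedded, so the distortion constants remain bounded.
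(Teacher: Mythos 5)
Your base case and your clause~(3) step match the paper, but your clause~(2) argument has a genuine gap: the family $\mathcal H = \{W_S,\, W_{\{s\}\cup s^\perp}\}$ does not satisfy condition~(1) of Definition~\ref{defn:strongly_algebraically_thick} in general, because the subgroup $W_{\{s\}\cup s^\perp}\cong \langle s\rangle\times W_{s^\perp}$ is quasi-isometric to $W_{s^\perp}$, and non-sphericity of $s^\perp$ only guarantees that $W_{s^\perp}$ is \emph{infinite}, not that it is wide or even thick. For instance, if $s^\perp$ generates a non-elementary hyperbolic Coxeter group such as a $(2,3,7)$ triangle group, then $W_{\{s\}\cup s^\perp}$ has exponential divergence and fails to be thick of any order. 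Your attempted patches do not hold: $W_{\{s\}\cup s^\perp}$ need not contain a product of two infinite subgroups, and an irreducible non-spherical $s^\perp$ is certainly not forced to be affine or to contain a base-class member of $\coxeterthick_0$.

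The paper avoids this by choosing $\mathcal H = \{W_S,\, sW_Ss\}$, i.e., the inductively thick group together with its conjugate by $s$. Both members are isomorphic to $W_S$, hence strongly algebraically thick by the inductive hypothesis; their intersection contains $W_{s^\perp}$ (since $s$ centralizes $s^\perp$), which is infinite by non-sphericity of $s^\perp$; and the crucial ingredient that makes this work is Lemma~\ref{lem:index2}, the reflection-subgroup observation that $\langle W_S \cup sW_Ss\rangle$ has index at most $2$ in $W_{S\cup\{s\}}$. Without a device like this --- which you have no analogue of --- it is unclear why any family of genuinely thick subgroups would generate a finite-index subgroup. Your discussion of the metric conditions~(3) and~(4) via convexity of Davis subcomplexes is reasonable in spirit, but note you would also need it for the conjugate $sW_Ss$, which is fine since conjugates of parabolics are still convex in the Davis complex.
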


The proof requires the following subsidiary fact.

\begin{lem}\label{lem:index2}
Let $(W, S)$ be a Coxeter system. Let $s \in S$ and set $K = S \setminus \{s\}$. Then the group $\la W_K \cup s W_K s\ra$ has index at most~$2$ in $W$.
\end{lem}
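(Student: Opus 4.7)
My plan is to give a very short, direct argument showing that $H := \langle W_K \cup sW_Ks \rangle$ is normalized by $s$ and that together with $s$ it generates the whole group $W$.

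First I would observe that $H$ is $s$-stable: since $s^2 = 1$, conjugation by $s$ sends $W_K$ to $sW_Ks$ and sends $sW_Ks$ to $s(sW_Ks)s = W_K$. Hence $sHs = H$, so $s$ normalizes $H$. Next I would note that $H$ clearly contains $W_K$, so the subgroup $\langle H, s\rangle$ contains $W_K \cup \{s\}$, which generates $W$; therefore $\langle H, s\rangle = W$.

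Finally, because $s$ normalizes $H$ and $s^2 = 1 \in H$, the quotient $\langle H, s\rangle / H = W/H$ is generated by the image of $s$ and this image has order at most $2$. Hence $[W : H] \leq 2$, as required.

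I do not anticipate any real obstacle here; the argument relies only on the involution property $s^2 = 1$ and the fact that $S = K \cup \{s\}$ generates $W$, both of which are immediate from the definition of a Coxeter system.
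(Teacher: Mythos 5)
Your proof is correct, and it is genuinely more elementary than the one in the paper. The paper appeals to the theory of reflection subgroups of Coxeter groups (citing Deodhar): it observes that $H := \langle W_K \cup sW_Ks\rangle$ is a reflection subgroup and identifies a fundamental domain with at most two chambers (the base vertex and its unique $s$-adjacent neighbour) in the Cayley graph. Your argument avoids any Coxeter-specific machinery: you show $H$ is $s$-stable by the symmetry of the generating set under conjugation by the involution $s$, note that $W_K$ normalizes $H$ trivially since $W_K \subseteq H$, conclude $H \trianglelefteq W = \langle W_K, s\rangle$, and then observe $W/H$ is cyclic of order at most $2$ because it is generated by the image of the involution $s$. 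This buys you a shorter, self-contained proof requiring nothing beyond the relation $s^2=1$ and the fact that $K \cup \{s\}$ generates $W$. What the paper's route buys instead is additional structural information --- namely that $H$ is itself a reflection subgroup, hence a Coxeter group in its own right with a concrete fundamental domain --- which is not needed for the index bound but situates the lemma in the standard framework used elsewhere in Coxeter theory. One small presentational note: when you write $\langle H,s\rangle / H = W/H$, you are implicitly using that $H$ is normal in $\langle H,s\rangle$; this does follow from $sHs = H$ together with $H$ normalizing itself, but it is worth stating that inference explicitly before forming the quotient.
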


\begin{proof}
The group $\la W_K \cup s W_K s\ra$ is a reflection subgroup whose fundamental domain for its action on the Cayley graph of $(W, S)$ contains at most two chambers, namely the base vertex $1$ and the unique vertex $s$-adjacent to it, see \cite{Deod_refl}.
\end{proof}

\begin{proof}[Proof of Proposition \ref{prop:thickgeneral}]
 If $(W,S)$ is in $\coxeterthick_0$ then the group $W$ is either
 virtually abelian of rank~$\geq 2$, or a direct product of two
 infinite (Coxeter) groups.  In particular $W$ is wide and, hence, strongly
 algebraically thick of order $0$.

 Let $(W,S\cup\{s\})$ be of the form described in item $2)$ of the definition of $\coxeterthick$. Lemma~\ref{lem:index2} then implies that $W$ contains the group $\la W_S \cup sW_Ss \ra$ with index at most~$2$. Therefore $W$ is strongly algebraically thick, being an algebraic network with respect to the pair of strongly thick groups $\{W_S, sW_Ss\}$.

 Finally, let $(W,S)$ be as in item $3)$ of the definition of $\coxeterthick$. Then $W$ is is strongly algebraically thick, being an algebraic network with respect to the pair of strongly thick groups $\{W_{S_1}, W_{S_2}\}$.
\end{proof}

\subsection{Proof of minimal relatively hyperbolic structures theorem}

We will use the following criterion for relative hyperbolicity of
Coxeter groups, which corrects
\cite[Theorem~A]{Caprace:relatively_hyperbolic} where a
hypothesis on the peripheral subgroups was missing.

\begin{thm}\cite[Theorem~A$'$]{Caprace:relatively_hyperbolicErr}
\label{a'}
 Let $(W,S)$ be a Coxeter system and $\jjj$ a collection of proper subsets of $S$. Then $W$ is hyperbolic relative to $\{W_J|J\in \jjj\}$ if and only if the following conditions hold:

 \medskip (RH1) For each irreducible affine subset $K\subseteq S$ of
 cardinality at least $3$, there exists $J\in \jjj$ so that
 $K\subseteq J$.  Similarly, given any pair of irreducible
 non-spherical subsets $K_1,K_2\subseteq S$ with $[K_1,K_2]=1$, there
 exists $J\in \jjj$ so that $K_1\cup K_2\subseteq J$.

 \medskip (RH2) For all $J_1,J_2\in\jjj$ with $J_1\neq J_2$, the
 intersection $J_1\cap J_2$ is spherical.

 \medskip (RH3) For each $J\in\jjj$ and each irreducible non-spherical
 $K\subseteq J$, we have $K^\perp\subseteq J$.
\end{thm}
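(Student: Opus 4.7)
The strategy is to let $\jjj$ be the collection of subsets $J\subseteq S$ such that $(W_J,J)\in\coxeterthick$ and $J$ is maximal with this property under inclusion. Condition~(i) is then immediate from Proposition~\ref{prop:thickgeneral}. If $S\in\jjj$, then $W$ itself is strongly algebraically thick, condition~(ii) is vacuous, and minimality is automatic (any relatively hyperbolic structure on a thick group forces all of $W$ into a single peripheral, so taking $\jjj=\{S\}$ exhibits the minimal structure). So the interesting case is when every $J\in\jjj$ is a proper subset of $S$, and I will verify the three conditions (RH1), (RH2), (RH3) of Theorem~\ref{a'}.

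For (RH1), let $K\subseteq S$ be irreducible affine with $|K|\geq 3$, or a union $K_1\cup K_2$ of irreducible non-spherical commuting subsets. Then $(W_K,K)\in\coxeterthick_0\subseteq\coxeterthick$ by clause~(1) of the definition of $\coxeterthick$, so $K$ is contained in some maximal thick subset, i.e., in some $J\in\jjj$. For (RH2), suppose $J_1,J_2\in\jjj$ are distinct and $J_1\cap J_2$ is non-spherical. Then $W_{J_1\cap J_2}$ is non-spherical, and $(W_{J_1},J_1),(W_{J_2},J_2)\in\coxeterthick$, so clause~(3) of the definition of $\coxeterthick$ implies $(W_{J_1\cup J_2},J_1\cup J_2)\in\coxeterthick$, contradicting maximality of $J_1$ (or $J_2$).

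For (RH3), let $J\in\jjj$, let $K\subseteq J$ be irreducible non-spherical, and let $s\in K^\perp\subseteq S$. Then $K\subseteq\{s\}^\perp$, so the parabolic subset $\{s\}^\perp$ is non-spherical (as it contains a non-spherical irreducible subset). By clause~(2) of the definition of $\coxeterthick$ applied to $(W_{J\cup\{s\}},J\cup\{s\})$ we conclude $(W_{J\cup\{s\}},J\cup\{s\})\in\coxeterthick$, and maximality of $J$ forces $s\in J$. Thus $K^\perp\subseteq J$.

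Finally, minimality of $\ppp$ follows from the standard fact that in a relatively hyperbolic group, every strongly algebraically thick subgroup must be conjugate into some peripheral (\cite[Corollary~7.9]{BDM} combined with \cite[Theorem~4.1]{BDM}); so for any other relatively hyperbolic structure $(W,\QQ)$ on $W$, each $W_J$ with $J\in\jjj$ is conjugate into some $Q\in\QQ$, which is the definition of minimality. The main obstacle in this plan is verifying that the three clauses of the definition of $\coxeterthick$ interact correctly with the syntactic conditions (RH1)--(RH3) of Theorem~\ref{a'}; this is where the structural design of $\coxeterthick$ (affine base, coning off a generator with non-spherical commutator, amalgamation across non-spherical parabolic subsets) is tailored precisely to match the obstructions to relative hyperbolicity identified by Theorem~\ref{a'}.
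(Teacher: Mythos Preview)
Your proposal does not address the stated theorem. Theorem~\ref{a'} is a \emph{cited} result from Caprace's erratum~\cite{Caprace:relatively_hyperbolicErr}; the paper does not prove it and simply invokes it as a black box. What you have written is instead a proof of Theorem~\ref{thm:MinimalPeripheral} (equivalently, Theorem~\ref{thm:MinimalPeripheral+}), which \emph{uses} Theorem~\ref{a'} as its main tool. So there is a mismatch between the statement you were asked to prove and the statement you actually proved.

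That said, regarding the theorem you did prove: your argument is correct and follows exactly the same route as the paper's own proof of Theorem~\ref{thm:MinimalPeripheral+}. You take $\jjj$ to be the maximal $\coxeterthick$-subsets, obtain~(i) from Proposition~\ref{prop:thickgeneral}, and verify (RH1)--(RH3) using, respectively, clause~(1), clause~(3), and clause~(2) of the definition of $\coxeterthick$ together with maximality---precisely as the paper does. Your justification of minimality via \cite{BDM} also matches the paper's (the paper cites \cite[Corollary~4.7]{BDM} directly). There is no substantive difference in approach.
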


We are now ready to prove Theorem~\ref{thm:MinimalPeripheral}. We will actually give an explicit description of $\jjj$:

\begin{thm}\label{thm:MinimalPeripheral+}
Let $(W, S) $ be a Coxeter system and let $\jjj$ be the (possibly empty) collection of all \emph{maximal} subsets $J\subseteq S$ so that $(W_J,J)\in \coxeterthick$. Then:
\begin{enumerate}[(i)]
\item The parabolic subgroup $W_J$ is strongly algebraically thick  for every $J \in \jjj$.

\item If $\jjj\neq \{S\}$, then $W$ is hyperbolic relative to
$\ppp = \{W_J \; | \; J \in \jjj\}$.
\end{enumerate}

In particular $\ppp$ is a minimal relatively hyperbolic structure for $W$.
\end{thm}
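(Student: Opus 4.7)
The plan is to treat (i) and (ii) separately. For (i), since each $J \in \jjj$ satisfies $(W_J,J) \in \coxeterthick$ by construction, Proposition~\ref{prop:thickgeneral} immediately yields that $W_J$ is strongly algebraically thick. For (ii), the strategy is to verify the three conditions (RH1)--(RH3) of Theorem~\ref{a'} for the collection $\jjj$, then to invoke the characterization of thick subgroups in relatively hyperbolic groups to obtain minimality.

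For (RH1), I would argue as follows. If $K \subseteq S$ is an irreducible affine subset with $|K| \geq 3$, then $(W_K,K) \in \coxeterthick_0 \subseteq \coxeterthick$ by clause (1) of the definition, so $K$ is contained in some maximal element of $\jjj$. Similarly, if $K_1, K_2 \subseteq S$ are irreducible non-spherical with $[K_1,K_2]=1$, then $(W_{K_1 \cup K_2}, K_1 \cup K_2) \in \coxeterthick_0$, so $K_1 \cup K_2$ lies in some member of $\jjj$. For (RH2), suppose for contradiction that $J_1, J_2 \in \jjj$ are distinct but $J_1 \cap J_2$ is non-spherical. Then $W_{J_1 \cap J_2}$ is non-spherical, and clause (3) of the definition of $\coxeterthick$ applied to $J_1, J_2$ gives $(W_{J_1 \cup J_2}, J_1 \cup J_2) \in \coxeterthick$, contradicting maximality of $J_1$ (and $J_2$). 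For (RH3), let $J \in \jjj$ and let $K \subseteq J$ be irreducible non-spherical; I want $K^\perp \subseteq J$. If some $s \in K^\perp$ were not in $J$, then $s^\perp \supseteq K$, so $s^\perp$ is non-spherical, and clause (2) applied to $(W_J, J) \in \coxeterthick$ would give $(W_{J \cup \{s\}}, J \cup \{s\}) \in \coxeterthick$, again contradicting maximality.

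Finally, for the minimality assertion, the plan is to use the non-relative-hyperbolicity property of thick groups: by the proposition cited after Definition~\ref{defn:strongly_algebraically_thick} (Corollary~7.9 of \cite{BDM}), any strongly algebraically thick subgroup of a relatively hyperbolic group is contained in a conjugate of some peripheral subgroup. Given any other relatively hyperbolic structure $(W, \mathcal{Q})$, each $W_J$ with $J \in \jjj$, being strongly algebraically thick by (i), must be conjugate into some $Q \in \mathcal{Q}$, so $\ppp$ refines every other such structure, hence is minimal.

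The main obstacle I anticipate is the verification of (RH3): one must check that if some generator $s$ commuted with a non-spherical irreducible subsystem $K$ inside a maximal $J \in \jjj$, then adjoining $s$ preserves membership in $\coxeterthick$. This is precisely what clause (2) of the definition of $\coxeterthick$ is designed for, so the key is to observe that $K \subseteq s^\perp$ forces $s^\perp$ to be non-spherical (containing a non-spherical parabolic subsystem). The other steps are essentially bookkeeping applications of the closure properties baked into the definition of $\coxeterthick$ and of the criterion in Theorem~\ref{a'}.
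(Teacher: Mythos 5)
Your proposal is correct and follows essentially the same route as the paper: (i) is immediate from Proposition~\ref{prop:thickgeneral}, (ii) is obtained by checking (RH1)--(RH3) of Theorem~\ref{a'} exactly via the closure clauses of $\coxeterthick$ together with maximality of the $J$'s, and minimality follows because the peripherals are thick and thick subgroups are conjugate into peripherals of any relatively hyperbolic structure \cite{BDM}. The only cosmetic difference is that the paper opens by invoking Moussong's criterion to note $\jjj=\emptyset$ exactly when $W$ is hyperbolic, whereas your (RH1) argument handles that case implicitly; your verifications of (RH2) and (RH3) in fact spell out slightly more detail than the paper does.
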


\begin{proof}
By Moussong's characterization of hyperbolic Coxeter groups~\cite[Theorem~17.1]{Moussong:thesis} (and the fact that $S$ is finite), $\jjj$ is not empty if and only if $W$ is not hyperbolic, which we assume from now on.

By Proposition~\ref{prop:thickgeneral}, (i) holds.

We are now left to show that $\jjj$ satisfies the three conditions (RH1)--(RH3) from Theorem~\ref{a'}.

It is clear that $\jjj$ satisfies (RH1).

If $J_1,J_2\in\jjj$ are distinct then $W_{J_1\cap J_2}$ must be spherical. In fact, if it was non-spherical then we would have $J_1\cup J_2\in \jjj$, contradicting the maximality of either $J_1$ or $J_2$. So, $\jjj$ satisfies (RH2).

Let $K$ be a non-spherical subgraph of some $J\in \jjj$. We have to show that $K^\perp$ is contained in $J$ as well. Indeed, if there was an element $s\in K^\perp\backslash J$, then $J\cup \{s\}$ would be in $\coxeterthick$, contradicting the maximality of $J$.

We have now shown the peripherals are in $\mathbb T$ and hence thick by
Proposition~\ref{prop:thickgeneral}. Thus, as noted in the introduction,
minimality now follows from \cite[Corollary~4.7]{BDM}.
\end{proof}

\subsection{Intrinsic horosphericity and further corollaries}\label{further}

We say that a discrete group $\Gamma$ is \emph{(intrinsically) horospherical} if every proper isometric action of $\Gamma$ on a proper hyperbolic geodesic metric space fixes a unique point at infinity. In particular the group $\Gamma$ cannot be virtually cyclic, and every element of infinite order acts as a parabolic isometry in any such  $\Gamma$-action. As one may expect, thickness and horosphericity are related properties (compare Theorem~4.1 from \cite{BDM}):

\begin{prop}\label{prop:Thick=>Periph}
Every strongly algebraically thick group is intrinsically horospherical.
\end{prop}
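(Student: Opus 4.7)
The plan is to induct on the order $n$ of strong algebraic thickness, using at each stage the classification of isometric actions on Gromov hyperbolic spaces into elliptic, parabolic, lineal, focal, and general-type.

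For the base case $n=0$, so $G$ is wide, suppose $G$ acts properly on a proper hyperbolic geodesic space $X$. I would first rule out loxodromic elements: a loxodromic $g \in G$ would have infinite order by properness, and standard hyperbolicity yields that any path in $X$ from $g^{-k}x_0$ to $g^kx_0$ avoiding a ball of radius $r$ about $x_0$ has length at least exponential in $\min(k,r)$. Combining this with coarse properness of the orbit map transfers the obstruction to $G$ itself, producing a superlinear divergence lower bound and contradicting wideness. With loxodromics excluded, the classification forces the action to be elliptic or parabolic. An elliptic action combined with properness on a proper space forces $G$ finite, contradicting the infiniteness of $G$ (wide groups have linear divergence and are therefore infinite, and are not virtually cyclic since $\mathbb R$ has cut points). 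Hence the action is parabolic and fixes a unique point at infinity, as desired.

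For the inductive step, let $G$ be strongly algebraically thick of order $n>0$ with network $\mathcal{H}$ of subgroups of strictly smaller thickness order, as in Definition \ref{defn:strongly_algebraically_thick}. Each $H \in \mathcal{H}$ acts properly on $X$, and by the inductive hypothesis fixes a unique $\xi_H \in \partial X$. The key step is to show these points all coincide: for consecutive $H, H'$ in the network chain, the intersection $H \cap H'$ is infinite and, as a subgroup of both $H$ and $H'$ (whose actions are parabolic), contains no loxodromic element. Infiniteness together with properness and the absence of loxodromics force, via the classification, the action of $H \cap H'$ to be parabolic with a unique fixed point, which must agree with both $\xi_H$ and $\xi_{H'}$. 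Chaining along the network yields a common $\xi \in \partial X$ fixed by $\langle \bigcup_H H\rangle$, which has finite index in $G$. To promote the fixed point to $G$ itself, take $N \trianglelefteq G$ a finite-index normal subgroup inside $\langle \bigcup_H H\rangle$ (via intersection of conjugates); if some $g\xi \neq \xi$, then $N$ would fix two distinct points at infinity and hence act lineally, making it virtually cyclic and forcing $G$ virtually cyclic, which is impossible since $G$ contains a wide subgroup obtained by descending through thickness orders. Uniqueness of the $G$-fixed point then follows immediately from uniqueness of each $\xi_H$.

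The main obstacle is the base case, specifically transferring the loxodromic-induced obstruction from the target $X$ back to the source group $G$ under only a proper (not necessarily cocompact) action. The orbit map $G \to X$ is Lipschitz and coarsely proper but is not generally a quasi-isometric embedding, so care is needed to convert ``paths avoiding a ball in $G$'' into ``paths avoiding a ball in $X$'' with sufficient quantitative control to conclude exponential divergence in $G$. Once the base case is settled, the inductive step is essentially formal, driven by the classification of isometric actions on hyperbolic spaces together with the chain condition in the definition of strong algebraic thickness.
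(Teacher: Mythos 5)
Your overall architecture matches the paper's: rule out loxodromic elements, invoke the classification of proper actions on proper hyperbolic spaces (the paper cites \cite[Proposition~5.5]{CF}) to get a unique fixed point at infinity for each network subgroup, chain these along the thick network to match the fixed points, and finally promote the fixed point to all of $G$ via the finite-index condition.

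However, the gap you flag in the base case is genuine and not easily patched by the route you propose. Coarse properness of the orbit map provides no quantitative control: the modulus $f$ with $d_X(x_0,gx_0)\geq f(d_G(e,g))$ can grow arbitrarily slowly (e.g.\ logarithmically, as already occurs for a $\mathbb Z^2$ parabolic in $\mathbb H^3$). Pushing a path in $G$ that avoids a $G$-ball of radius $R$ forward to $X$, you get a path avoiding an $X$-ball of radius only about $f(R)$, hence a length lower bound of order $\exp(cf(R))$; if $f$ is logarithmic this is merely polynomial, and the exponent depends on the unknown hyperbolicity constant of $X$, so it need not be superlinear. The divergence-based contradiction therefore does not go through. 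The paper sidesteps exactly this obstacle by not reasoning via divergence at all: it uses Lemma~\ref{lem:DMS} (the Dru\c{t}u--Mozes--Sapir lemma), which shows that if $n\mapsto\mathfrak q(h^n)$ is a Morse quasi-geodesic in $X$ and $\mathfrak q$ is Lipschitz up to additive constant, then $h$ is already a Morse element in $H$; a Morse element forces cut-points in an asymptotic cone, contradicting wideness. The Morse property transfers under merely Lipschitz maps, which is precisely the robustness your divergence estimate lacks. In the inductive step, your matching of fixed points along the network reproduces the paper's Lemma~\ref{lem:UniqueFP}, and your finite-index promotion is close in spirit, although the paper phrases it more cleanly in terms of the finite $G$-orbit of $\xi$ (more than two points gives a bounded orbit, exactly two gives virtually cyclic) rather than passing to a normal core, which avoids having to separately argue that a proper action fixing two boundary points forces the group to be finite or virtually $\mathbb Z$.
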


The proof requires the following result, which follows from the exact same arguments as the proof of Lemma~3.25 in \cite{DrutuMozesSapir}.

\begin{lem}\label{lem:DMS}
Let $H$ be a finitely generated group (endowed with its word metric with respect to a finite generating set), $(X, d)$ be a metric space and $\mathfrak q \colon H \to X$ be a map which is Lipschitz up to an additive constant. Given $h \in H$, if the map $\mathbf Z \to X : n \mapsto \mathfrak q(h^n)$ is a Morse quasi-geodesic in
$X$, then $h$ is a Morse element in $H$. \qed
\end{lem}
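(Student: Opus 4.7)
The plan is to transfer the Morse property from the image in $X$ back to $H$ by pushing forward a hypothetical non-Morse behavior in $H$ and deriving a contradiction from the Morse hypothesis on $n \mapsto \mathfrak q(h^n)$.

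First I would verify that $n \mapsto h^n$ is itself a quasi-geodesic in $H$. The upper bound $d_H(h^m,h^n) \leq |h|_S \cdot |m-n|$ is immediate. For the lower bound, the hypothesis that $n \mapsto \mathfrak q(h^n)$ is a $(\lambda,\mu)$-quasi-geodesic gives $d_X(\mathfrak q(h^m),\mathfrak q(h^n)) \geq \lambda^{-1}|m-n|-\mu$, while the Lipschitz-up-to-additive bound on $\mathfrak q$ gives $d_X(\mathfrak q(h^m),\mathfrak q(h^n)) \leq L \cdot d_H(h^m,h^n)+A$. Combining these yields a linear lower bound on $d_H(h^m,h^n)$, so $\gamma \colon n\mapsto h^n$ is a quasi-geodesic in $H$.

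Next, suppose for contradiction that $h$ is not Morse in $H$. Then there exist $K\geq 1$, $C\geq 0$ and a sequence of $(K,C)$-quasi-geodesics $\beta_i$ in $H$ with endpoints on $\langle h\rangle$ and points $p_i\in\beta_i$ with $r_i := d_H(p_i,\langle h\rangle)\to\infty$. Since $\mathfrak q$ is Lipschitz up to additive constant, each composition $\mathfrak q\circ\beta_i$ is a $(K',C')$-quasi-geodesic in $X$ (with uniform constants) whose endpoints lie on the Morse quasi-geodesic $\mathfrak q\circ\gamma$. By hypothesis, each $\mathfrak q\circ\beta_i$ therefore remains within a uniform distance $R=R(K',C')$ of $\mathfrak q(\langle h\rangle)$ in $X$.

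Now I would pass to asymptotic cones: fix an ultrafilter $\omega$, rescale $H$ by the factors $r_i$ with basepoints $p_i$, and simultaneously rescale $X$ (with basepoints $\mathfrak q(p_i)$) by the same sequence. The map $\mathfrak q$ induces a limiting map $\bar{\mathfrak q}\colon \operatorname{Cone}_\omega(H)\to\operatorname{Cone}_\omega(X)$ which is $L$-Lipschitz (the additive constants vanish in the rescaling). The cyclic subgroup $\langle h\rangle$ limits to a bi-infinite bi-Lipschitz line $\bar\gamma_H$ in $\operatorname{Cone}_\omega(H)$, and the quasi-geodesics $\beta_i$ limit to an arc $\bar\beta$ in $\operatorname{Cone}_\omega(H)$ with endpoints on $\bar\gamma_H$ that passes through a point $p_\omega$ at distance exactly $1$ from $\bar\gamma_H$. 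Under $\bar{\mathfrak q}$, the line $\bar\gamma_H$ maps bi-Lipschitzly onto the limit line $\bar\gamma_X$ of $\mathfrak q(\langle h\rangle)$ in $\operatorname{Cone}_\omega(X)$ (bi-Lipschitz since both bounds on $d_X(\mathfrak q(h^m),\mathfrak q(h^n))$ are linear in $|m-n|$).

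The main obstacle, and the crux of the argument (this is where Lemma~3.25 of \cite{DrutuMozesSapir} does the real work), is translating the Morse conclusion in $\operatorname{Cone}_\omega(X)$ back to one in $\operatorname{Cone}_\omega(H)$. The Morse hypothesis implies that $\bar\gamma_X$ is a \emph{cut-line} in $\operatorname{Cone}_\omega(X)$: any topological arc with endpoints on $\bar\gamma_X$ is contained in $\bar\gamma_X$. Applying this to $\bar{\mathfrak q}(\bar\beta)$ forces $\bar{\mathfrak q}(\bar\beta)\subseteq \bar\gamma_X$, in particular $\bar{\mathfrak q}(p_\omega)\in\bar\gamma_X$. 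Since $\bar{\mathfrak q}|_{\bar\gamma_H}$ is a bi-Lipschitz bijection onto $\bar\gamma_X$, there is a unique point $q\in\bar\gamma_H$ with $\bar{\mathfrak q}(q)=\bar{\mathfrak q}(p_\omega)$. The subarc of $\bar\beta$ joining $p_\omega$ to $q$ through $\bar\gamma_H$ gives a closed loop whose image under $\bar{\mathfrak q}$ collapses to a point in $\bar\gamma_X$; using the $L$-Lipschitz property and the cut-line property applied to appropriate sub-arcs (one shows that the preimage of $\bar\gamma_X$ under the $L$-Lipschitz map $\bar{\mathfrak q}$ in a cone is itself tree-like along $\bar\gamma_H$), one concludes $p_\omega=q\in\bar\gamma_H$, contradicting $d(p_\omega,\bar\gamma_H)=1$. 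This contradiction shows $h$ is Morse in $H$.
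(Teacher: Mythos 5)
The paper itself gives no written proof here; it cites the argument of Lemma~3.25 in Dru\c{t}u--Mozes--Sapir, so there is nothing to match your proposal against line by line. Evaluated on its own, however, the proposal contains a genuine and central gap: the claim that ``since $\mathfrak q$ is Lipschitz up to additive constant, each composition $\mathfrak q\circ\beta_i$ is a $(K',C')$-quasi-geodesic in $X$'' is simply false. A coarsely Lipschitz map gives only the \emph{upper} bound $d_X(\mathfrak q(x),\mathfrak q(y))\leq L\,d_H(x,y)+A$; it gives no lower bound, so the image of a quasi-geodesic need not be a quasi-geodesic (it can, for example, contain arbitrarily long nearly-constant stretches). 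Thus the uniform bound $R$ which you extract from the Morse property of $\mathfrak q(\langle h\rangle)$ is not justified, and this is exactly where one must use whatever additional structure the argument in Dru\c{t}u--Mozes--Sapir exploits; the transfer of the Morse property cannot be obtained from Lipschitzness alone.

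The asymptotic cone part has an analogous problem at the final step. From the cut-point characterization of Morse one does get that the continuous (but typically non-injective) path $\bar{\mathfrak q}(\bar\beta)$ must pass through all intervening points of $\bar\gamma_X$, but it does \emph{not} follow that $\bar{\mathfrak q}(\bar\beta)\subseteq\bar\gamma_X$ (a path may detour off a cut-line and return), so the assertion $\bar{\mathfrak q}(p_\omega)\in\bar\gamma_X$ is unjustified. Even granting it, the conclusion $p_\omega=q$ does not follow: $\bar{\mathfrak q}$ is merely Lipschitz and in no way injective off $\bar\gamma_H$, and the statement you invoke --- that ``the preimage of $\bar\gamma_X$ under $\bar{\mathfrak q}$ is tree-like along $\bar\gamma_H$'' --- is not a consequence of the hypotheses (for instance, if $\bar{\mathfrak q}:\reals^2\to\reals$ is a coordinate projection, that preimage is the whole plane). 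The overall strategy of pushing to cones and using the transversal-line characterization of Morse is in the right spirit, but both places where the Morse information is supposed to constrain behaviour back in $H$ rely on claims that do not hold for a map which is only coarsely Lipschitz.
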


\begin{lem}\label{lem:UniqueFP}
Let $H$ be a group acting properly by  isometries on a proper Gromov hyperbolic metric space $X$. Assume that $H$ has a unique fixed point $\xi$ at infinity of $X$. Then every infinite subgroup of $H$ has $\xi$ as its unique fixed point at infinity.
\end{lem}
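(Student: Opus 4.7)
The approach is to prove uniqueness of the fixed point of $H'$ by contradiction, using the uniqueness of the fixed point of $H$ to restrict the dynamics of $H$ itself. First, every element of $H$ fixes $\xi$, so every element of $H'$ fixes $\xi$; in particular $\xi$ is a fixed point of $H'$. Suppose for contradiction that $H'$ also fixes some $\eta \in \partial X$ with $\eta \neq \xi$.

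The central intermediate step is to show that $H$ contains no loxodromic isometry. Suppose otherwise, and let $h \in H$ be loxodromic; since $h$ fixes $\xi$, its fixed-point set at infinity is $\{\xi, \eta_0\}$ for some $\eta_0 \neq \xi$. For any $g \in H$, the conjugate $ghg^{-1} \in H$ is loxodromic with fixed-point set $\{g\xi, g\eta_0\} = \{\xi, g\eta_0\}$. If $g\eta_0 = \eta_0$ for every $g \in H$, then $\eta_0$ would be a second fixed point of $H$, contradicting the uniqueness of $\xi$. Otherwise there exists $g_0 \in H$ with $g_0\eta_0 \neq \eta_0$, so $H$ contains two loxodromic elements $h$ and $g_0 h g_0^{-1}$ which share exactly one fixed point at infinity, namely $\xi$. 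Such a ``focal-type'' configuration (two loxodromics with overlapping but distinct fixed-point pairs, all fitting inside a group that fixes a unique boundary point) is incompatible with $H$ acting properly on the proper hyperbolic space $X$. This follows from the classification of isometric actions on proper hyperbolic spaces (see Gromov, \emph{Hyperbolic groups}, or Caprace--Cornulier--Monod--Tessera, \emph{Amenable hyperbolic groups}): heuristically, the two loxodromics with a common attractor/repeller would either generate a non-elementary subgroup of $H$, which cannot fix $\xi$ (since non-elementary groups have no boundary fixed points), or force a Baumslag--Solitar-type solvable structure on the generated subgroup, which cannot embed discretely as isometries of a proper hyperbolic space.

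Granting this claim, the conclusion follows quickly. The subgroup $H'$ inherits a proper isometric action on $X$; it has unbounded orbits (being infinite under a proper action on a proper metric space); and it contains no loxodromic element, since $H$ contains none. By the Gromov classification of isometric actions on hyperbolic spaces, an unbounded isometry group containing no loxodromic element is of parabolic type, and therefore fixes a \emph{unique} point at infinity. Since $H'$ already fixes $\xi$, this unique fixed point must be $\xi$, contradicting our assumption that $\eta \neq \xi$ is also fixed by $H'$. The main obstacle in the argument is the intermediate claim excluding loxodromic elements from $H$, i.e.\ ruling out focal-type proper actions on proper hyperbolic spaces; once that is in hand, the rest of the proof is a direct application of the standard classification.
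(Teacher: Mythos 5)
Your proof follows the same route as the paper's: establish that $H$ contains no loxodromic (hyperbolic) isometry, then apply the standard dichotomy (bounded orbits versus a unique fixed point at infinity) for groups with no loxodromic to the infinite subgroup $H'$, using properness of the action to rule out the bounded case. The paper asserts the ``no loxodromic'' step without elaboration and cites a proposition of Caprace--Fujiwara for the dichotomy; you correctly flag that step as the crux and give a heuristic for excluding focal configurations that is in the right spirit, though note that the ``non-elementary'' branch of your case analysis is vacuous --- $\langle h, g_0 h g_0^{-1}\rangle$ fixes $\xi$ by construction and so cannot be of general type, so only the focal case genuinely needs to be excluded, and that exclusion (two loxodromics sharing exactly one endpoint forcing a failure of properness) is the single substantive point left unproven in both your write-up and the paper's.
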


\begin{proof}
The hypotheses imply that $H$ does not contain any hyperbolic isometry. From Proposition~5.5 in \cite{CF}, it follows that every subgroup of $H$ either has a bounded orbit, or has a unique fixed point at infinity of $X$. The desired conclusion follows since the $H$-action on $X$ is proper.
\end{proof}

\begin{proof}[Proof of Proposition~\ref{prop:Thick=>Periph}]
Let $H$ be a finitely generated group which is wide.
Suppose that $H$ acts properly by isometries on a proper Gromov
hyperbolic metric space $X$.
$H$ can not contain a hyperbolic isometry,  since otherwise
Lemma~\ref{lem:DMS} implies that some
asymptotic cone of $H$ has cut-points, which would contradict the
assumption that $H$ is wide.  Since $H$ is infinite and the
$H$--action on $X$ is proper, it follows from
\cite[Proposition~5.5]{CF} that $H$ fixes a unique point at infinity
of $X$.  This proves that strongly algebraically thick groups of
order~$0$ are intrinsically horospherical.

The desired conclusion now follows by induction on the order of
thickness, the induction step being given by the following
observation.  Let $G$ be an infinite group which is an $M$-algebraic
network with respect to a finite collection $\mathcal H$ of subgroups.
If each subgroup in $\mathcal H$ is intrinsically
peripheral, then so is $G$.

Indeed, let $G$ act properly by isometries on  a proper Gromov hyperbolic metric space $X$. Then each group $H \in \mathcal H$ has a unique fixed point $\xi_H$ at infinity of $X$. Given $H, H' \in \mathcal H$, there is a sequence $H = H_1, \dots, H_N = H'$ in $\mathcal H$ in which any two consecutive groups have an infinite intersection, see Definition~5.2 in \cite{BDM}. From Lemma~\ref{lem:UniqueFP}, we deduce that $\xi_{H} = \xi_{H_1} = \dots = \xi_{H_n} = \xi_{H'}$. Hence all groups in $\mathcal H$ have the same fixed point at infinity, say $\xi$. By the definition of an algebraic network, this point $\xi$ must be fixed by a finite index subgroup of $G$. Thus the $G$-orbit of $\xi$ is finite. But if that orbit contains more than two points then $G$ will have a bounded orbit, contradicting the fact that $G$ is infinite and acts properly. Similarly, if the orbit contains exactly two points, then $G$ is virtually cyclic and hence does not contain any intrinsically peripheral subgroup, which is absurd. Thus $G$ fixes $\xi$ (and no other point at infinity of $X$).
\end{proof}

Notice that the converse to Proposition~\ref{prop:Thick=>Periph} does
not hold in general: indeed horospherical groups include all amenable
groups that are not virtually cyclic. In particular, infinite locally
finite groups are examples of horospherical groups that are not
strongly algebraically thick.
By Zorn's lemma, every intrinsically horospherical subgroup of
$\Gamma$ is contained in a maximal one.  It is thus a natural question
to determine all the maximal intrinsically horospherical subgroups.
Theorem~\ref{thm:MinimalPeripheral} yields the answer to
this question when $\Gamma$ is a Coxeter group.

\begin{cor}\label{cor:MinimalPeripheral}
Let $W$ be a Coxeter group. Then the maximal intrinsically horospherical subgroups of $W$ are parabolic subgroups (in the sense of Coxeter group theory) with respect to any Coxeter generating set. Those parabolic subgroups are precisely the conjugates of the elements of the set $\ppp$  afforded by Theorem~\ref{thm:MinimalPeripheral}.
\end{cor}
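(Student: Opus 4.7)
The plan is to prove two inclusions: (a) every $W$-conjugate of a subgroup $W_J$ with $J \in \jjj$ is intrinsically horospherical; (b) every intrinsically horospherical subgroup of $W$ is contained in such a $W$-conjugate. Together these produce the claimed description of the maximal intrinsically horospherical subgroups of $W$ as precisely the $W$-conjugates of the elements of $\ppp$, each of which is a parabolic subgroup of $(W,S)$ by construction.

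For (a), Theorem~\ref{thm:MinimalPeripheral+}(i) ensures each $W_J$ with $J\in\jjj$ is strongly algebraically thick, and Proposition~\ref{prop:Thick=>Periph} then gives intrinsic horosphericity. Since this property is an abstract invariant of the isomorphism class of the subgroup, it is inherited by every $W$-conjugate of $W_J$.

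For (b), I would argue by cases. The case $\jjj=\emptyset$, equivalently $W$ hyperbolic by Moussong's criterion, is immediate: every infinite-order element of $W$ acts loxodromically on the Cayley graph of $W$, so an intrinsically horospherical $H\leq W$ would have to be torsion and hence finite (being a torsion subgroup of a hyperbolic group), thus virtually cyclic, contradicting the standing restriction built into the definition of intrinsic horosphericity. The case $\jjj=\{S\}$ is trivial since every subgroup of $W$ lies in $W=W_S$. Otherwise, Theorem~\ref{thm:MinimalPeripheral+}(ii) furnishes a relatively hyperbolic structure $(W,\ppp)$, and I would equip $W$ with a proper isometric action on a proper Gromov hyperbolic space $X$ (for instance the Groves-Manning cusped space), whose Bowditch-type boundary decomposes into parabolic points stabilized by the $W$-conjugates of the elements of $\ppp$ and conical limit points whose stabilizers contain loxodromic isometries. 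Given intrinsically horospherical $H\leq W$, the restricted $H$-action fixes a unique point $\xi\in\partial X$; since no element of $H$ acts loxodromically on $X$, the point $\xi$ cannot be a conical limit point, so it is parabolic and hence $H\subseteq\mathrm{Stab}(\xi) = gW_Jg^{-1}$ for some $g\in W$ and $J\in\jjj$.

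Combining (a) and (b) yields the description of the maximal intrinsically horospherical subgroups, and since intrinsic horosphericity is an abstract group-theoretic property of $W$, this canonical family is independent of the Coxeter generating set; running the same argument for any other Coxeter generating set $S'$ shows that each maximal intrinsically horospherical subgroup is also a parabolic subgroup of $(W,S')$. The main obstacle in the plan is step (b) in the non-hyperbolic case, specifically the structural input that stabilizers of conical limit points in the Bowditch boundary contain loxodromic isometries; the cleanest presentation will invoke this fact explicitly.
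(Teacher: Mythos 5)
Your proposal is correct and follows essentially the same route as the paper: peripherals are intrinsically horospherical because they are strongly algebraically thick (Theorem~\ref{thm:MinimalPeripheral+}(i) plus Proposition~\ref{prop:Thick=>Periph}), and the relatively hyperbolic structure forces every intrinsically horospherical subgroup into a conjugate of an element of $\ppp$, which is exactly how the paper argues. The only differences are presentational: you spell out the latter step via the cusped space and the conical/parabolic dichotomy (correctly noting that the needed input is that an \emph{infinite} subgroup fixing a conical limit point must contain a loxodromic), and you treat the cases $\jjj=\emptyset$ and $\jjj=\{S\}$ explicitly, whereas the paper simply quotes the general fact and leaves the almost-malnormality of peripherals (which rules out proper containments among their conjugates, giving maximality) implicit at the same level as you do.
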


\begin{proof}
Every strongly algebraically thick group is intrinsically horospherical by Proposition~\ref{prop:Thick=>Periph}. Moreover, a subgroup of $W$ containing properly a conjugate of an element of $\ppp$  cannot be intrinsically horospherical by Theorem~\ref{thm:MinimalPeripheral}. Thus the elements of $\ppp$ are indeed maximal horospherical subgroups. Since $W$ is relatively hyperbolic with respect to $\ppp$, every intrinsically horospherical subgroup is conjugate to a subgroup of an element of $\ppp$.
\end{proof}

\begin{cor}\label{cor:charThick}
Let $(W, S)$ be a Coxeter system. Then the following conditions are equivalent:
\begin{enumerate}[(i)]
\item $(W,S)$ is in $\coxeterthick$\label{it:inT}

\item $W$ is strongly algebraically thick; \label{it:thick}

\item $W$ is intrinsically horospherical; \label{it:inperiph}

\item $W$ is not relatively hyperbolic with respect to any family of
proper subgroups; \label{it:nonRH}

\item $W$ is not relatively hyperbolic with respect to any family of
proper Coxeter-parabolic subgroups; \label{it:coxnonRH}

\item For every collection $\jjj$ of subsets of $S$ satisfying
(RH1)--(RH3), we have $S \in \jjj$. \label{it:parab}
\end{enumerate}

\end{cor}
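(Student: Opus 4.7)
The plan is to close the cycle $\mathrm{(i)}\Rightarrow\mathrm{(ii)}\Rightarrow\mathrm{(iii)}\Rightarrow\mathrm{(iv)}\Rightarrow\mathrm{(v)}\Rightarrow\mathrm{(vi)}\Rightarrow\mathrm{(i)}$. Of these, $\mathrm{(i)}\Rightarrow\mathrm{(ii)}$ is precisely Proposition~\ref{prop:thickgeneral}, $\mathrm{(ii)}\Rightarrow\mathrm{(iii)}$ is Proposition~\ref{prop:Thick=>Periph}, and $\mathrm{(iv)}\Rightarrow\mathrm{(v)}$ is immediate because Coxeter-parabolic subgroups form a particular class of proper subgroups. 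This leaves three substantive implications to handle.

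For $\mathrm{(iii)}\Rightarrow\mathrm{(iv)}$ I would argue the contrapositive. Assume $W$ is hyperbolic relative to some family $\ppp$ of proper subgroups. The Groves--Manning cusped space $X$ (or, when $\ppp=\emptyset$, the Cayley graph of $W$) furnishes a proper isometric action of $W$ on a proper Gromov hyperbolic geodesic metric space. Any element of $W$ not conjugate into a peripheral acts loxodromically on $X$, and one can find pairs of such elements whose fixed-point sets on $\partial X$ are disjoint, so $W$ cannot fix a unique boundary point and is therefore not intrinsically horospherical. An alternative route, essentially the argument sketched in the introduction, is to use the fact that every intrinsically horospherical subgroup of a relatively hyperbolic group must be conjugate into a peripheral; applied to $W$ itself this would force $W$ into some proper $P\in\ppp$, which is absurd.

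For $\mathrm{(v)}\Rightarrow\mathrm{(vi)}$, let $\jjj$ be any collection of subsets of $S$ satisfying (RH1)--(RH3). If $S\notin\jjj$ then every element of $\jjj$ is a proper subset of $S$, so Theorem~\ref{a'} applies and yields that $W$ is hyperbolic relative to the family $\{W_J\mid J\in\jjj\}$ of proper Coxeter-parabolic subgroups, contradicting $\mathrm{(v)}$; hence $S\in\jjj$. For $\mathrm{(vi)}\Rightarrow\mathrm{(i)}$, let $\jjj'$ be the collection of maximal $J\subseteq S$ with $(W_J,J)\in\coxeterthick$ that appears in Theorem~\ref{thm:MinimalPeripheral+}; its proof verifies that $\jjj'$ satisfies (RH1)--(RH3) (in the hyperbolic case $\jjj'=\emptyset$, but then (RH1)--(RH3) are vacuous and $S\notin\jjj'$, so $\mathrm{(vi)}$ fails outright and this case cannot occur). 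Condition $\mathrm{(vi)}$ then forces $S\in\jjj'$, which by the very definition of $\jjj'$ is exactly the statement $(W,S)\in\coxeterthick$, i.e., $\mathrm{(i)}$.

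The main obstacle is $\mathrm{(iii)}\Rightarrow\mathrm{(iv)}$: one must convert an abstract relatively hyperbolic structure into a concrete proper action on a proper hyperbolic geodesic metric space witnessing the failure of intrinsic horosphericity, which requires a careful description of the boundary dynamics on the cusped space. All other implications reduce directly to Theorem~\ref{a'} and Theorem~\ref{thm:MinimalPeripheral+}, together with the propositions on thickness and horosphericity already established in the appendix.
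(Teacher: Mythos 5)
Your proposal is correct and follows essentially the same route as the paper: the same cycle through Proposition~\ref{prop:thickgeneral}, Proposition~\ref{prop:Thick=>Periph}, Theorem~\ref{a'}, and Theorem~\ref{thm:MinimalPeripheral+}, with the only differences being cosmetic (the paper closes the loop via the equivalence of (v) and (vi) plus (v)$\Rightarrow$(i), while you go (v)$\Rightarrow$(vi)$\Rightarrow$(i)). The implication (iii)$\Rightarrow$(iv), which the paper dismisses as straightforward, is adequately handled by your second argument (an intrinsically horospherical $W$ would be conjugate into a proper peripheral, absurd); note only that your first argument via pairs of loxodromics on the cusped space tacitly assumes the action is non-elementary, with the finite and virtually cyclic cases excluded directly by the definition of intrinsic horosphericity.
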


\begin{proof}
The implication (\ref{it:inT}) $\Rightarrow$ (\ref{it:thick}) is the
content of Proposition \ref{prop:thickgeneral}.  The implication
(\ref{it:thick}) $\Rightarrow$ (\ref{it:inperiph}) follows from
Proposition~\ref{prop:Thick=>Periph}.  The implication
(\ref{it:inperiph})~$\Rightarrow$~(\ref{it:nonRH}) is straightforward.
Property (\ref{it:nonRH}) trivially implies (\ref{it:coxnonRH}).
That (\ref{it:coxnonRH}) is equivalent to (\ref{it:parab}) follows from
Theorem~\ref{a'}.
Applying
Theorem~\ref{thm:MinimalPeripheral+} we get that
(\ref{it:coxnonRH}) implies (\ref{it:inT}).
\end{proof}


\bibliographystyle{alpha}
\bibliography{Coxeter_Thick.bib}
\end{document}